\renewcommand{\footnotemark}{}
\def\div{\mbox{div }}
\def\w{\widetilde}
\def\u{\mathbf{u}}
\def\v{\textbf{v}}
\def\w{\textbf{w}}
\def\f{\mathbf{f}}
\def\V{\textbf{V}}
\def\H{\textbf{H}}
\def\L{\textbf{L}}
\def\W{\textbf{W}}
\def\e{\textbf{e}}
\def\P{\textbf{P}}
\def\I{\textbf{I}}
\def\i{\textbf{i}}
\newtheorem{theorem}{Theorem}[section]
\newtheorem{lemma}{Lemma}[section]
\newtheorem{remark}{Remark}[section]
\newtheorem{Assumption}{Assumption}[section]
\newtheorem{Algorithm}{Algorithm}[section]
\numberwithin{equation}{section}
\title{\bf 
	Unconditionally Optimal error Estimate of a  linearized Second-order Fully Discrete Finite Element Method for the bioconvection flows with concentration dependent viscosity
	}
\author{
	Chenyang Li\textsuperscript{1,*}
	\thanks{\textsuperscript{1,*}School of Mathematical Sciences, East China Normal University, Shanghai 200241, China \texttt{52275500026@stu.ecnu.edu.cn}(Corresponding author)}
	\and 
	Yuze Lu\textsuperscript{1}
	\thanks{\textsuperscript{1}School of Mathematical Sciences, East China Normal University, Shanghai 200241, China \texttt{51265500066@stu.ecnu.edu.cn}}
		\and  
		Haibiao Zheng\textsuperscript{2,*}
	\thanks{\textsuperscript{2,*}School of Mathematical Sciences, Ministry of Education Key Laboratory of Mathematics and Engineering Applications, Shanghai Key Laboratory of PMMP,  East China Normal University, Shanghai 200241, China. \texttt{hbzheng@math.ecnu.edu.cn}(Corresponding author) }
}
\date{}
\begin{document}

\maketitle

\begin{abstract}
In this paper, the coupled and decoupled BDF2 finite element discrete schemes are obtained for the time-dependent bioconvection flows problem with concentration dependent viscosity, which consisting of the Navier-Stokes equation coupled with a linear convection-diﬀusion equation modeling the concentration of microorganisms in a culture fluid. The unconditionally optimal error estimate for the velocity and concentration in $L^2$-norm are proved by using finite element approximations in space and finite diﬀerences in time. Finally, the numerical results for different viscosity are showed to support the theoretical analysis. 
\\[5pt]
\textbf{Keywords}:
Bioconvection, Error analysis, Second-order backward difference formula (BDF2), Finite element methods
\end{abstract}

\pagestyle{myheadings}
\thispagestyle{plain}

\section{Introduction}
The bioconvection model is coupled by the Navier-Stokes
 type equations describe the flow of the incompressible viscous culture fluid and the advection-diffusion equations describe the transport of micro-organisms:
 \begin{align}
 \frac{\partial \u}{\partial t} - \div ( \nu(c) D(\u)) + \u \cdot \nabla \u + \nabla p = - g (1+\gamma c) \textbf{i}_2 + \textbf{f}, \quad  x \in \Omega, \, t>0,\label{biobdf-32}\\
 \nabla \cdot \u = 0, \quad  x \in \Omega, \, t>0, \\
 \frac{\partial c}{\partial t} -  \theta \Delta c + \u \cdot \nabla c + U \frac{\partial c}{\partial x_2}=0, \quad  x \in \Omega, \, t>0.\label{biobdf-33}
 \end{align}
 
The unknowns are the concentration $c$, the velocity $\u$ and the pressure $p$, which is assumed to have zero mean for uniqueness. Here $\Omega$ is a bounded domain with smooth boundary $\partial \Omega$, $x_2$ is the second component of independent variable $x$, $\theta >0$ is the diffusivity, the kinematic viscosity $\nu(c)$ depends on the concentration of the micro-organisms \cite{batchelor1972,brady1993}, $D(\u) = \frac{1}{2} ( \nabla \u + \nabla \u ^{T})$ is the stress tensor. $\textbf{f}$ is the external force and $g$ is the acceleration of the gravity, $U>0$ is the mean velocity of upwind swimming of the micro-organisms.

The parameter $\gamma>0$ represents the relative difference of the density $\rho_0$ of the micro-organism form, the density $\rho_m$ of the culture flow is determined by $\gamma = \frac{\rho_0}{\rho_m}-1$ and the term  $-g(1+\gamma c )\textbf{i}_2$ captures the effects of gravity on the organisms at the upper surface, the term $U \frac{\partial c}{\partial x_2}$ representing the effects of the average upward swimming. 

In an ideal Newtonian fluid, the viscosity is assumed to be a constant, but is not suitable for the real-life suspensions,  where the viscosity depends on the concentration of the micro-organisms and the exponent expressions of $\nu(\cdot)$ as a function of concentration $c$ are demonstrated in  \cite{batchelor1972,brady1993,mooney1951,einstein1906,krieger1959}. We consider homogenous dirichlet boundary condition $\u=0$ and the non-flux Robin condition for the concentration
 $$\theta \frac{\partial c}{\partial n} - c U n_2=0,$$
 where $\textbf{n}=(n_1,n_2,n_3)$ is the unit outward normal vector on the boundary. In addition, we assume that the total mass of the micro-organisms in the container remains constant, that is
  \begin{align}\label{biobdf-1}
  \frac{1}{|\Omega|} \int_{\Omega} c(x) dx = \alpha,
  \end{align}
where  $|\Omega|$ is expressed as the measure of $\Omega$ and the constant $\alpha$ represents the average concentration in the container.

Finally the bioconvection model is described  on a bounded time interval $I=(0,T]$ and is prescribed with initial conditions on $\u, p,c$ as follow. 
 \begin{align}\label{biobdf-45}
 	\left\{\begin{aligned}
	\frac{\partial \u}{\partial t} - \div(\nu(c) D (\u)) + \u \cdot \nabla \u + \nabla p = - g (1+\gamma c) \textbf{i}_2 + \textbf{f}, \quad  &x \in \Omega, \, t>0,\\
	\nabla \cdot \u = 0, \quad  &x \in \Omega, \, t>0, \\
	\frac{\partial c}{\partial t} -\theta \Delta c + \u \cdot \nabla c + U \frac{\partial c}{\partial x_2}=0, \quad  &x \in \Omega, \, t>0,\\
	\frac{1}{|\Omega|} \int_{\Omega} c(x) dx = \alpha, \quad &x \in \Omega, \, t>0.\\
	\u=0, \,\, \theta \frac{\partial c}{\partial n} - c U n_2=0, \quad &x \in \partial \Omega, \, t>0.
\end{aligned}\right.
\end{align}

There have been several studies examining the well-posedness of solutions to both time-dependent and steady bio-convection flow problems. The existence of a solution of system (\ref{biobdf-45}) with constant viscosity was proved by the semi-group method in \cite{kan1992}, the corresponding numerical study can be founded in \cite{harashima1988}. The authors proved  the existence and uniqueness of a periodic solution of bioconvection with concentration dependent density in  \cite{climent2013}. For more general boundary conditions \cite{cao2020}, the authors proved the existence and uniqueness of a weak solution for the system (\ref{biobdf-45}) under quite loose (and realistic) assumptions on the viscosity, notably the boundedness condition $0 < \nu_{*} \leq \nu(c) \leq \nu^*$.

There are some contributions on the numerical analysis and simulations for the bioconvection flows problem. Numerical approximations based on the finite element method are constructed by using the stable finite element pair for the velocity and pressure, and error estimates of the velocity and concenteation in $H^1$- norm are obtained in \cite{cao2014}. 
The authors studied some convergence rates of the spectral Galerkin approximations for unsteady bioconvection flow in \cite{aguiar2017}.
The fully discrete finite element scheme for the time-dependent problem was proposed in \cite{cao2020}, the error estimate for the proposed  backward Euler finite element approximations scheme are sub-optimal in $L^2$-norm.
The authors rewrite the steady bioconvection flow problem in term of a first-order system based on the introduction of the shear-stress, the vorticity, and the pseudo-stress tensors in the fluid equations along with an auxiliary vector in the concentration equation,  the existence and uniqueness results are proved by the Lax-Milgram theorem and Banach fixed-point theorems, an augmented mixed finite element method are studied in \cite{colmenares2021}.

To the best of our knowledge, the optimal error analysis for the unsteady model, particularly for high-order schemes, has not yet been thoroughly investigated. The objective of this paper is to introduce the BDF2 fully discrete finite element scheme for solving the time-dependent bioconvection flows problem with concentration dependent viscosity. The fully coupled and decoupled BDF2 finite element method are proposed, and the unconditionally optimal convergent rate for the velocity and concentration in $L^2$-norm and $H^1$-norm are established in the sense that no any restrictions are imposed on the relationship
between the time-step size and the spatial size.

The rest of this paper is organized as follows. In the next section, we introduce the notations, some function spaces and numerical scheme. The decoupled and coupled BDF2 finite element methods and their stability results are presented in Section 3. The optimal error estimates for the proposed methods are derived in Section4. In final section, the numerical results for different viscosity are showed to support the theoretical analysis.
The symbol $C$ is used throughout to denote a generic positive constant, whose value may differ at different occurrences, but which does not depend on the discretization parameters 
$\tau$ (time step size) and 
$h$(mesh size).
\section{Preliminaries}
For the mathematical setting of this model, we introduce
some function spaces and their associated norms.
For $k\in\mathbb N^+, 1\leq p\leq +\infty$, let $W^{k,p}(\Omega)$ denote the standard Sobolev space. The norm in $W^{k,p}(\Omega)$ is denoted by $\|\cdot\|_{W^{k,p}}$.
We define $W_0^{k,p}(\Omega)$ to be the subspace of $W^{k,p}(\Omega)$ of functions with zero trace on $\partial\Omega$.
When $p=2$, we simply use $H^k(\Omega)$ to denote $W^{k,2}(\Omega)$. 
The boldface Sobolev spaces $\H^k(\Omega), \W^{k,p}(\Omega)$ and $\L^p(\Omega)$ are used to denote the vector Sobolev spaces
$H^k(\Omega)^2, W^{k,p}(\Omega)^2$ and $L^p(\Omega)^2$ respectively.
Let $X$ be a Banach space, for some $T>0$, denote Bochner spaces $L^p(0,T;X),1\leq p<+\infty$ as  the
spaces of measurable functions from the interval $[0,T]$ into $X$
such that
$$\int_0^T\| \u(t)\|_X^p d  t<+\infty,$$
if $p=+\infty$, the functions in $L^\infty(0,T;X)$ are required to satisfy
$$\sup\limits_{t\in [0,T]}\| \u(t)\|_X<+\infty.$$

For simplicity, we denote the inner products of both
$L^2(\Omega)$ and $\textbf{L}^2(\Omega)$
by $(\cdot,\cdot)$, and use $\langle \cdot,\cdot \rangle$ to denote the dual product of $H^{-1}(\Omega) \times H^1_0(\Omega)$. namely,
\begin{align*}
	\begin{split}
		&(u,v)=\int_\varOmega u(x)v(x) d x \quad \forall  \, u,v\in L^2(\Omega),\\
		&({\bf u},{\bf v})=\int_\Omega {\bf u}(x)\cdot {\bf v}(x) dx \quad \forall \, {\bf u},{\bf v}\in {\bf L}^2(\Omega) .\\
	\end{split}
\end{align*}

Introducing the following function spaces and notions:
\begin{align}
\textbf{V}&=H^1_0 = \{  \v \in H^1(\Omega) ; \,\, \v |_{\partial \Omega} =0 \},\\
  \textbf{V}_0 &= \{ \v \in \textbf{V}; \,  \nabla \cdot \v=0 \,\, \text{in} \,  \Omega\},\\
 M &= L^2_0 (\Omega) = \{ q \in L^2(\Omega); \,\, \int_{\Omega} q dx =0    \},\\
\tilde{H}&=H^1(\Omega)  \cap L^2_0(\Omega). 
\end{align}

The norm in $\V$ is given by
$$\| \v\|_{\V} = (\int_{\Omega} | \nabla \v |^2 dx )^{\frac{1}{2}} \quad \forall \, \v \in \V.$$

According to the Agmon inequality and the interpolation inequality \cite{adams1978}, one has
\begin{align}
	&\|\u\|_{L^{\infty}} \leq C \| \u \|^\frac{1}{2}_{L^2} \| \u \|^{\frac{1}{2}} _{H^2}, \quad \| \u \|_{L^4 } \leq C \| \u \|^{\frac{1}{2}}_{L^2}| \u \|^{\frac{1}{2}}_{H^1}, \quad \forall \u \in \H^2(\Omega)\cap \V,   \label{biobdf-15}\\
		&\|c\|_{L^{\infty}} \leq C \| c \|^\frac{1}{2}_{L^2} \| c \|^{\frac{1}{2}} _{H^2}, \quad \| c \|_{L^4 } \leq C \| c \|^{\frac{1}{2}}_{L^2}| c \|^{\frac{1}{2}}_{H^1}, \quad \forall c \in H^2(\Omega)\cap H^1_0 (\Omega).\label{biobdf-20}
 \end{align}

The following   Sobolev imbedding inequalities in 2D will be used in the following \cite{adams1978} 
\begin{align}\label{biobdf-52}
	\begin{split}
	&W^{2,4}(\Omega)  \hookrightarrow W^{1,\infty}(\Omega),\\ 
	&H^2(\Omega) \hookrightarrow W^{1,q}(\Omega), \quad 2 \leq q < \infty,\\
	&H^2(\Omega) \hookrightarrow L^{\infty}(\Omega).
		\end{split}
\end{align}

Noticing that (\ref{biobdf-1}) is equivalent to require that $c-\alpha \in L^2_0(\Omega)$, we can adopt the same method in \cite{cao2020,antwolevel2023,antwolevel2024}, introducing an auxiliary concentration $c_\alpha=c-\alpha$ and $\textbf{f}_\alpha=\textbf{f}-g\gamma \alpha \textbf{i}_2$, then we can rewrite the original form (\ref{biobdf-45}) to the following system (still denote $c=c_\alpha, \textbf{f} = \textbf{f}_\alpha$ to simplify notation).
\begin{align}\label{biobdf-3}
	\left\{\begin{aligned}
		\frac{\partial \u}{\partial t} - \div ( \nu(c+\alpha) \nabla \u) + \u \cdot \nabla \u + \nabla p = - g (1+\gamma c) \textbf{i}_2 + \textbf{f}, \quad  &x \in \Omega, \, t>0,\\
		\nabla \cdot \u = 0, \quad  &x \in \Omega, \, t>0, \\
		\frac{\partial c}{\partial t} - \theta  \Delta c + \u \cdot \nabla c + U \frac{\partial (c+\alpha)}{\partial x_2}=0, \quad  &x \in \Omega, \, t>0,\\
		\frac{1}{|\Omega|} \int_{\Omega} c(x) dx = 0, \quad &x \in \Omega, \, t>0.\\
		\u=0, \,\, \theta \frac{\partial c}{\partial n} - U(c+\alpha)n_2=0, \quad &x \in \partial \Omega, \, t>0.
	\end{aligned}\right.
\end{align}

The transport terms have the diﬃculty that the corresponding discrete forms do not preserve the alternance property as in the continuous case, and we need to introduce some
skew-symmetric trilinear forms to make the stability analysis and error estimate  by
\begin{align}
	\begin{split}
	B(\textbf{u},\textbf{v},\textbf{w}) &= \int_\Omega (\u \cdot \nabla \textbf{v}) \cdot\textbf{w} dx + \frac{1}{2} \int_{\Omega} (\nabla \cdot \u) \v \cdot \w dx \\
	&=   \frac{1}{2}\int_\Omega (\u \cdot \nabla \textbf{v}) \cdot\textbf{w} dx -  \frac{1}{2} \int_{\Omega} (\u \cdot \nabla) \w \cdot\v dx  \quad \forall ~\u,\v,\w\in\textbf{V},\\
	b(\u,c,r) &=  \int_\Omega (\u \cdot \nabla c)r dx+ \frac{1}{2} \int_{\Omega} (\nabla \cdot \u) crdx, \\
	&=\frac{1}{2}\int_\Omega (\u \cdot \nabla c) r dx -  \frac{1}{2} \int_{\Omega} (\u \cdot \nabla r) c dx \quad \forall ~\u \in \V, \,  \forall c,r \in \tilde{H}.
		\end{split}
\end{align}
which has the following properties \cite{heyinnian2005,heyinnian2022}
\begin{align}
		& B(\textbf{u},\textbf{v},\textbf{v})=0, \quad b(\u,r,r)=0,\label{biobdf-2}\\
		&B(\textbf{u},\textbf{v},\textbf{w}) = - b(\textbf{u},\textbf{w},\textbf{v}),\quad b(\textbf{u},c,r) = - b(\textbf{u},r,c),\label{biobdf-17}\\
		&B(\textbf{u},\textbf{v},\textbf{w}) \leq C \| \nabla \textbf{u} \|_{L^2} \| \nabla \textbf{v} \| _{L^2} \| \nabla \textbf{w} \|_{L^2},\quad b(\textbf{u},c,r) \leq C \| \nabla \textbf{u} \|_{L^2} \| \nabla c \| _{L^2} \| \nabla r \|_{L^2},\\
		&B(\textbf{u},\textbf{v},\textbf{w})  \leq C \| \textbf{u} \| ^{\frac{1}{2}}_{L^2} \| \nabla \textbf{u}\| ^{\frac{1}{2}}_{L^2} \| \nabla \textbf{v} \| _{L^2} \| \nabla \textbf{w} \|_{L^2}, \quad  b(\textbf{u},c,r)  \leq C \| \textbf{u} \| ^{\frac{1}{2}}_{L^2} \| \nabla \textbf{u}\| ^{\frac{1}{2}}_{L^2} \| \nabla c \| _{L^2} \| \nabla r \|_{L^2}.
\end{align}

If $\nabla \cdot \u=0$, there holds $B(\textbf{u},\textbf{v},\textbf{w}) = (\textbf{u} \cdot \nabla \textbf{v},\textbf{w})= \int_\Omega (\u \cdot \nabla \textbf{v}) \cdot\textbf{w} dx $ and $b(\u,c,r) = (\textbf{u} \cdot \nabla c,r) = \int_\Omega (\u \cdot \nabla c)r dx$.

Based on the above notation, for given $\f \in \L^2(\Omega)$, the weak formulation of the bio-convection model (\ref{biobdf-3}) read as follows. We find $(\u,p,c) \in \textbf{V} \times M\times \tilde{H}$ such that  
\begin{align}\label{biobdf-4}
		\left\{\begin{aligned}
	&(\frac{\partial \u}{\partial t} , \v) + (\nu(c+\alpha) \nabla \u,\nabla \v) + B(\u,\u,\v) - ( \div \v,p) + (\div \u,q) = (\f,\v)- g((1+\gamma c)\i_2,\v),\\
	&(\frac{\partial c}{\partial t},r) +\theta (\nabla c, \nabla r) +b(\u,c,r) -U(c,\frac{\partial r}{\partial x_2}) = U\alpha (1, \frac{\partial r}{\partial x_2}),\\
	&\u(0)=\u_0, \quad c(0)=c_0.
		\end{aligned}\right.
\end{align}
for any $(\v,q,r)\in \V \times M\times \tilde{H}$. 

It is equivalent to solve the following problem \cite{girault2012, temam2024}. We find $(\u,c) \in \textbf{V} \times \tilde{H}$ such that  
\begin{align}\label{biobdf-6}
	\left\{\begin{aligned}
		&(\frac{\partial \u}{\partial t} , \v) + (\nu(c+\alpha) \nabla \u,\nabla \v) + B(\u,\u,\v) = (\f,\v)- g((1+\gamma c)\i_2,\v),\\
		&(\frac{\partial c}{\partial t},r) + \theta ( \nabla c, \nabla r) +b(\u,c,r) -U(c,\frac{\partial r}{\partial x_2}) = U\alpha (1, \frac{\partial r}{\partial x_2})\\
		&\u(0)=\u_0, \quad c(0)=c_0.
	\end{aligned}\right.
\end{align}
for any $(\v,r)\in \V \times \tilde{H}$. 
The existence and uniqueness of the solution had been proved in \cite{cao2020}.

To prove the unconditional stability and error estimate of the following temporal and spatial discrete schemes, we recall the following discrete Gronwall inequality established in \cite{evance2022,heywood1990}.
\begin{lemma} (Discirete Gronwall's inequality ) Let $a_k , b_k$ and $y_k$ be the nonnegative numbers such that \label{biobdf-11}
	\begin{align}\label{growninequality-discrete}
		a_n+ \tau \sum \limits^n \limits_{k=0}  b_k\leq \tau \sum \limits^n \limits_{k=0} \gamma _k a_k + B \quad \text{for} \,\, n \geq 1,
	\end{align}
	Suppose $\tau \gamma _k \leq 1$ and set $\sigma_k = (1-\tau \gamma_k) ^{-1}$. Then there holds
	\begin{align}\label{grown2}
		a_n + \tau \sum \limits^n \limits_{k=0} b_k \leq exp(\tau \sum \limits^n \limits_{k=0} \gamma_k \sigma_k) B \quad  \text{for} \,\, n\geq 1.
	\end{align}
	\begin{remark}
		If the sum on the right-hand side of (\ref{growninequality-discrete}) extends only up to $n-1$, then the estimate (\ref{grown2}) still holds for all $k \geq 1$ with $\sigma_k=1$.
	\end{remark}
	
\end{lemma}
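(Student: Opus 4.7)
The plan is to isolate the $k=n$ term from the right-hand sum, absorb it into the left-hand side using the hypothesis $\tau\gamma_n\leq 1$, and then run a short induction on a natural auxiliary sequence. Starting from the hypothesis rewritten as
\begin{align*}
a_n + \tau\sum_{k=0}^{n} b_k \leq \tau\sum_{k=0}^{n-1}\gamma_k a_k + \tau\gamma_n a_n + B,
\end{align*}
I would transpose $\tau\gamma_n a_n$ and multiply through by $\sigma_n=(1-\tau\gamma_n)^{-1}\geq 1$. Discarding the nonnegative term $\tau\sum b_k$ on the left then yields $a_n \leq \sigma_n A_{n-1}$, where $A_{n-1}:=\tau\sum_{k=0}^{n-1}\gamma_k a_k + B$.

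With this pointwise bound in hand, the induction runs on $A_n$ itself. Since $A_n-A_{n-1}=\tau\gamma_n a_n$, the preceding inequality gives the multiplicative recursion $A_n\leq(1+\tau\gamma_n\sigma_n)A_{n-1}$, so that $A_{n-1}\leq B\prod_{k=0}^{n-1}(1+\tau\gamma_k\sigma_k)$. Applying $1+x\leq e^x$ factor by factor, I obtain
\begin{align*}
a_n + \tau\sum_{k=0}^{n} b_k \leq \sigma_n A_{n-1} \leq \sigma_n B\exp\!\Bigl(\tau\sum_{k=0}^{n-1}\gamma_k\sigma_k\Bigr).
\end{align*}

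To arrive at the exact form stated in the lemma, the remaining step is to swallow the prefactor $\sigma_n$ into the exponential. This follows from the elementary inequality $(1-x)^{-1}\leq \exp(x/(1-x))$ on $[0,1)$, i.e., $\sigma_n\leq \exp(\tau\gamma_n\sigma_n)$, which extends the sum in the exponent to run from $0$ to $n$ and closes the lemma.

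The bookkeeping is the only genuinely delicate point rather than a conceptual one: one must treat $\tau\gamma_n<1$ strictly (at equality $\sigma_n$ is undefined), and the variant flagged in the remark---where the right-hand sum only reaches $n-1$---should be kept separate, because no peeling is then needed and one can simply set $\sigma_k\equiv 1$, obtaining the cleaner bound directly by induction on $A_n$ without invoking the elementary inequality for $\sigma_n$.
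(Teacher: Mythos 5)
The paper does not actually prove this lemma --- it is recalled verbatim from Heywood and Rannacher \cite{heywood1990} and used as a black box --- so there is no in-paper proof to compare against; judged on its own, your argument is correct and is essentially the standard proof of this result. The peeling of the $k=n$ term, the pointwise bound $a_n\le\sigma_nA_{n-1}$, the multiplicative recursion $A_n\le(1+\tau\gamma_n\sigma_n)A_{n-1}$, the factor-by-factor use of $1+x\le e^x$, and the final absorption of the prefactor via $\sigma_n\le\exp(\tau\gamma_n\sigma_n)$ (equivalently $(1-x)^{-1}\le e^{x/(1-x)}$ on $[0,1)$) are all sound, as is your cleaner treatment of the variant in the remark. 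Two bookkeeping points deserve to be made explicit. First, your chain $A_{n-1}\le B\prod_{k=0}^{n-1}(1+\tau\gamma_k\sigma_k)$ requires the base case $A_0=\tau\gamma_0a_0+B\le(1+\tau\gamma_0\sigma_0)B$, i.e.\ $a_0\le\sigma_0B$; this follows from the hypothesis applied at $n=0$, which the lemma as printed does not grant (it assumes the inequality only for $n\ge1$, whereas the original in \cite{heywood1990} assumes it for $n\ge0$). This is an imprecision inherited from the paper's transcription rather than a flaw in your reasoning, but you should either invoke the $n=0$ instance explicitly or start the product at $k=1$ and carry $A_0$ through the estimate. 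Second, you are right that $\tau\gamma_k<1$ must be strict for $\sigma_k$ to be defined; the printed condition $\tau\gamma_k\le1$ is again a transcription slip, and noting it, as you do, is appropriate.
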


Next, we make the following Assumptions.
\begin{Assumption} \label{biobdf-9}
The kinematic viscosity $\nu(c)$ is smooth function and satisfies
\begin{align}\label{biobdf-5}
	k^{-1} \leq \nu(c)  \leq k, \quad  |\nu^{'}(c)|,  \leq \beta, \quad \forall \, c \in \mathbb{R}. 
\end{align}
\end{Assumption}

\begin{Assumption}
	The initial values and the force satisfy
	\begin{align}\label{biobdf-36}
		\u_0 \in \V_0 \cap \H^2(\Omega), \quad c_0 \in \tilde{H} \cap \H^2(\Omega), \quad \f \in \L^{\infty} (0,T;\L^2).
	\end{align}
\end{Assumption}

\begin{Assumption}\label{biobdf-46}
Suppose that solutions $(u, c)$ satisfy the following regularity
\begin{align}\label{biobdf-47}
	\begin{split}
	&(\mathbf{u}, c) \in L^\infty(0,T; W^{2,4}(\Omega)) \\
	&(\mathbf{u}_t, c_t) \in L^\infty(0,T; L^2(\Omega)) \cap L^2(0,T; H^1(\Omega)), \\
	&c_{tt} \in L^2(0,T;H^{1}), \quad \u_{tt} \in L^2(0,T;L^2),\\
	&(\mathbf{u}_{ttt}, c_{ttt}) \in L^2(0,T; H^{-1}(\Omega)), 
	\end{split}
\end{align}
\end{Assumption}

Let $N$ be a positive integer and $0 = t_0 < t_1< \cdots <t_N=T$ be a uniform partition of $[0,T]$, with $\tau = \Delta t= T/N $. For the sake of convenience, we introduce the following notation
\begin{align*}
	&D_\tau v^{n+1} = \frac{3 v^{n+1} - 4 v^n +v^{n-1}}{ 2\tau}, \quad d_\tau v^{n+1} = \frac{v^{n+1} -v^n}{\tau},\\
	&\hat{v}^n = 2 v^n-v^{n-1}.
\end{align*}
and the telescope formula \cite{liujie2013}
\begin{align}
		(D_{\tau} v^{n+1} , v^{n+1})  =& \frac{1}{4 \tau} ( \| v^{n+1} \| ^2_{L^2} - \| v^n\|^2_{L^2}  + \| \hat{v} ^{n+1} \|^2_{L^2} - \| \hat{v}^n \|^2_{L^2}   )  \nonumber \\
		&+ \frac{1}{4\tau} \| v^{n+1} -2v^n+v^{n-1}   \|^2_{L^2} \quad  \forall  \, 1 \leq n \leq N-1.
\end{align}

The exact solutions $( \u,p,c)$  at $t=t_{n+1}$ in the continuous system (\ref{biobdf-3}) will be denoted by  $(\u^{n+1},p^{n+1},c^{n+1})$ for all $0\leq n \leq N-1 $.  Further the exact solutions $(\u,p,c)$ at $t=t_{n+1}$ satisfy the following weak formulation

\begin{align}
	&(D_\tau \u^{n+1},\v) + (( \nu(\hat{c}^n + \alpha ) \nabla \u^{n+1}  ) , \nabla \v)+ B( \hat{\u}^n,  \u^{n+1},\v) +(\nabla p^{n+1}, q)  \nonumber\\
	= &(\f^{n+1},\v)-g(( 1+ \gamma \hat{c}^n)\textbf{i}_2,\v)+<R_\u^{n+1},\v> \quad \forall (\v,q) \in (\V \times M), \label{biobdf-48}\\
	&(D_\tau c^{n+1},r) +  \theta( \nabla c^{n+1} ,\nabla r)+ b( \hat{\u}^n,  c^{n+1},r) - U( \hat{c}^n  , \frac{\partial r}{\partial x_2})\nonumber \\
	=& <R^{n+1}_c,r> + U(\alpha , \frac{\partial r}{\partial x_2}) \quad \forall r \in \tilde{H}\label{biobdf-49}
\end{align}

where
\begin{align*}
	R^{n+1}_\u = D_\tau \u^{n+1} - \u_t(t_{n+1})+ \div ( \nu(  \hat{c}^n+\alpha) \nabla \u^{n+1})-\div ( \nu(  c^{n+1}+\alpha) \nabla \u^{n+1})\\
	+ \hat{\u}^{n}\cdot \nabla \u^{n+1}-\u^{n+1} \cdot \nabla \u^{n+1} + g(1+\gamma \hat{c}^{n})\textbf{i}_2-g(1+\gamma c^{n+1})\textbf{i}_2,\\
	R_c^{n+1} = D_{\tau}c^{n+1} -c_t (t_{n+1}) 
	+ \hat{\u}^{n} \cdot \nabla c^{n+1}-\u^{n+1} \cdot \nabla c^{n+1} + U \frac{\partial (\hat{c}^{n} + \alpha )}{\partial x_2}-U \frac{\partial (c^{n+1} + \alpha )}{\partial x_2}.
\end{align*}

By using \cite{rongy2020}
\begin{align}
	\| \frac{3 \u^{n+1} - 4 \u^{n} + \u^{n-1}}{2 \tau} - \u_t(t_{n+1}) \| ^2_{L^2} \leq C \tau^3 \int^{t_{n+1}}_{t_{n-1}}  \| \u_{ttt}\|^2_{L^2} dt,\\
	\| \u^{n+1}- 2 \u^{n}+\u^{n-1} \|^2_{L^2} \leq C \tau^3 \int^{t_{n+1}}_{t_{n-1}}  \| \u_{tt}\|^2_{L^2} dt,
\end{align}

Thus
\begin{align}
	\begin{split}
		\| R^{n+1}_\u \|_{H^{-1}} =& \underset{\v \in \V}{\sup} \frac{\langle R^{n+1}_\u, \v \rangle }{\|\nabla \v \|_{L^2}}\\
		\leq &C \tau^{\frac{3}{2}} ( \int^{t_{n+1}}_{t_{n-1}}  \| \u_{ttt}\|^2_{H^{-1}} dt)^{\frac{1}{2}} + C \tau^{\frac{3}{2}} \| \u(t_{n+1}) \|_{H^2} ( \int^{t_{n+1}}_{t_{n-1}}  \| \nabla c_{tt}\|^2_{L^2} dt)^{\frac{1}{2}}\\
		&+ C \tau^{\frac{3}{2}} \| \u(t_{n+1})\|_{H^2} ( \int^{t_{n+1}}_{t_{n-1}}  \|  \u_{tt}\|^2_{L^2}  dt)^{\frac{1}{2}}+C \tau^\frac{3}{2}( \int^{t_{n+1}}_{t_{n-1}}  \|  c_{tt}\|^2_{L^2}  dt)^{\frac{1}{2}},
	\end{split}
\end{align}
\begin{align}
	\begin{split}
		\| R^{n+1}_c \|_{H^{-1}} =& \underset{r \in  \tilde{H}}{\sup} \frac{\langle R^{n+1}_c, r \rangle}{\|\nabla r \|_{L^2}}\\
		\leq &C \tau^{\frac{3}{2}} ( \int^{t_{n+1}}_{t_{n-1}}  \| c_{ttt}\|^2_{L^2} dt)^{\frac{1}{2}} 
		+ C \tau^{\frac{3}{2}} \| c(t_{n+1})\|_{H^2} ( \int^{t_{n+1}}_{t_{n-1}}  \|  \u_{tt}\|^2_{L^2}  dt)^{\frac{1}{2}}+C \tau^\frac{3}{2}( \int^{t_{n+1}}_{t_{n-1}}  \|  \nabla c_{tt}\|^2_{L^2}  dt)^{\frac{1}{2}},
	\end{split}
\end{align}
Furthermore, by using the regularity assumption (\ref{biobdf-47}) we have
\begin{align}\label{biobdf-37}
	\tau \sum_{n=1}^{N} (  \| R^{n+1}_\u\|^2_{H^{-1}}  + \| R^{n+1}_c \|^2_{H^{-1}} ) \leq C \tau^4.
\end{align}

Especially, for $n=0$, the continuous system (\ref{biobdf-3}) at $t=t_0$ becomes
\begin{align}
	d_\tau \u^1-\div(\nu(c^0+\alpha) \nabla \u^1)  + \u^0\cdot \nabla \u^1+ \nabla p^1 + g ( 1+ \gamma c^0)\textbf{i}_2 = \f^1+ R^1_\u, \label{biobdf-31} \\
	d_\tau c^1 -\theta \Delta c^1 + \u^0\cdot \nabla c^1 + U \frac{\partial (c^0 + \alpha )}{\partial x_2}= R^{1}_c.\label{biobdf-38}
\end{align}

Further 
\begin{align}
	&(d_\tau \u^1,\v) + (( \nu(c^0 + \alpha ) \nabla \u^1  ) , \nabla \v)+ B( \u^0,  \u^1,\v)   \nonumber\\
	= &(\f^{1},\v)-g(( 1+ \gamma c^0)\textbf{i}_2,\v)+(R_\u^{1},\v), \label{biobdf-50}\\
	&(d_\tau c^1,r) +  \theta( \nabla c^1 ,\nabla r)+ b( \u^0, c^1,r) - U( c^0 , \frac{\partial r}{\partial x_2})\nonumber \\
	=& (R^{1}_c,r) + U(\alpha , \frac{\partial r}{\partial x_2}),\label{biobdf-51}
\end{align}
where 
\begin{align}
	\begin{split}
		R^1_\u =& d_\tau \u^1 - \u_t(t_1) + \u^0 \cdot \nabla \u^1 - \u^1 \cdot \nabla \u^1 - \div(\nu(c^0+\alpha) \nabla \u^1) \\
		+ &\div( \nu(c^1+\alpha) \nabla \u^1)
		+ g(1+\gamma c^0)\textbf{i}_2-  g(1+\gamma c^1)\textbf{i}_2,\\
		R_c^{1} =& d_{\tau}c^1  -c_t (t_{1})+\u^0 \cdot \nabla c^1-\u^1 \cdot \nabla c^1+ U \frac{\partial (c^0+ \alpha )}{\partial x_2}-U \frac{\partial (c^1 + \alpha )}{\partial x_2}.
	\end{split}
\end{align}

By noticing that 
\begin{align}
	\| \frac{\u^1 - \u^0}{\tau}  - \u_t(t_1) \|^2_{L^2}  \leq C \tau ^2 \int^{t_{n+1}}_{t_{n-1}}  \| \u_{tt}\|^2_{L^2} dt,
\end{align}

Applying the assumption (\ref{biobdf-36}) and the similar technique in (\ref{biobdf-37}), we have 
\begin{align}
	\| R^1_\u \|^2_{L^{2}} + \| R^1_c\|^2_{L^{2}}\leq C \tau^2.
\end{align}

\section{The decoupled and coupled BDF2 Finite Element schemes and their stabilities}

In this section, we will present the finite element  discrete method for the continuous system (\ref{biobdf-6})  and derive the optimal convergence rate in space.

Let $\mathcal{T}_h$ be a family of quasi-uniform triangular partition of $\Omega$. The corresponding ordered triangles are denoted by $\mathcal{K}_1,\mathcal{K}_2, \cdots \mathcal{K}_n$, let $h_i$= diam$(\mathcal{K}_i) ~i= 1,2 \cdots,n,$  and $h=\max\{h_1,h_2,\cdots h_n\}.$ For every $\mathcal{K} \in \mathcal{T}_h$, let $P_r(\mathcal{K})$ denote the space of polynomials on $\mathcal{K}$ of degree at most $r$.  We use the mini (P1b-P1) element to approximate the velocity and pressure, and the piecewise linear (P1) element to approximate the concentration, the corresponding finite element spaces are given by \cite{antwolevel2023,scott2008}
   \begin{align*}
   	&\V_h = \left\{ \mathbf{v}_h \in C(\overline{\Omega})^2 \cap \mathbf{V}, \ \mathbf{v}_h|_\mathcal{K} \in \left( P_1(\mathcal{K}) \oplus b(\mathcal{K}) \right)^2, \ \forall \mathcal{K} \in \mathcal{T}_h \right\},\\
   	&M_h = \left\{ q_h \in C(\overline{\Omega}) \cap H^1(\Omega), \ q_h|_\mathcal{K} \in P_1(\mathcal{K}), \ \forall \mathcal{K} \in \mathcal{T}_h, \ \int_{\Omega} q_h \, dx = 0 \right\},\\
   	&\tilde{H}_h = \left\{ r_h \in C(\overline{\Omega}) \cap \widetilde{H}, \ r_h|_\mathcal{K} \in P_1(\mathcal{K}), \ \forall \mathcal{K} \in \mathcal{T}_h \right\},
   \end{align*}
where $b(\mathcal{K})$ is the bubble function on each tetrahedron $\mathcal{K} \in \mathcal{T}_h$.

It's well known that $V_h$ and $M_h$ satisfy the discrete LBB condition \cite{babuvska1973,brezzi1974} for the mini element. i.e. there exists some $\beta >0$ being independent of mesh size $h$ such that
\begin{align*}
	\beta \|q_h \|_{L^2} \leq  \underset{\textbf{v}_h \in \textbf{V}}{\sup}  \frac{(\div \textbf{v}_h, q_h)}{\| \nabla \textbf{v}_h \|_{L^2}}, \quad \forall q_h \in M_h.
\end{align*}

The inverse inequalities will be used frequently
\begin{align}
	\| \u_h \|_{W^{m,q}} \leq C h^{  l-m + n( \frac{1}{q} -\frac{1}{p}  )} \| \u_h\| _{W^{l,p}}, \quad \forall~ \u_h \in V_h, \label{biobdf-16}\\
		\| c_h \|_{W^{m,q}} \leq C h^{  l-m + n( \frac{1}{q} -\frac{1}{p}  )} \| c_h\| _{W^{l,p}}, \quad \forall~ c_h \in \tilde{H}_h,\label{biobdf-21}
\end{align}

Let $\u^0_h = \I_h \u_0$ and $ c^0_h = \Pi_h c_0$, where $\I_h, \Pi$ are the classical interpolation operator onto $\V_h, \tilde{H}_h$,  then there holds \cite{scott2008}: 
\begin{align}
	&\| \u_0 -\u^0_h \|_{L^2} + \| c_0 - c^0_h \|_{L^2} \leq C h^2, \label{biobdf-26}\\
	&	\| \u_0 -\u^0_h \|_{H^1} + \| c_0 - c^0_h \|_{H^1} \leq C h,\label{biobdf-27}\\
	&	\| \u^0_h \|_{L^{\infty}} + \|c^0_h \|_{L^{\infty}} \leq C.
\end{align}

Furthermore, the following interpolation approximation holds
\begin{align}
	\begin{split}
	\| \u - \I_h \u \|_{L^2} + h \| \u- \I_h \u \|_{H^1} \leq C h^2 \| \u \|_{H^2},	\\
	\| p- J_h p \|_{L^2} \leq C h \| p\|_{H^1},\\
	\| c- \Pi_h c\| +h \|c- \Pi_h c\|_{H^1} \leq Ch^2 \|c\|_{H^2},
	\end{split}
\end{align}
where $J_h$ is the classical interpolation operator $M \rightarrow M_h.$

Next, we introduce a projection operator with variable coefficients onto finite element spaces, For $0\leq n\leq N-1$, we define the stokes projection operator $(\P_h^{n+1}, \rho_h^{n+1}) : \V \times M_h \rightarrow \V_h \times M_h$ with variable coefficient by \cite{liyuanmhd2024}
\begin{align}
	\nu(c) (\nabla(\u -\P_h^{n+1} \u ), \nabla \v_h   ) +( \nabla \cdot \v_h, p-\rho_h^{n+1} p)=0, \quad \forall \v_h \in \V_h,\\
	( \nabla \cdot (\u - \P_h^{n+1} \u),q_h   ) =0, \quad \forall q_h \in M_h,
\end{align}
and there holds 
\begin{align}\label{biobdf-56}
	\| \u - P_h^{n+1} \u \|_{L^2} + h ( \| \nabla (\u -\P_h^{n+1} \u)\|_{L^2} + \| p -\rho_h^{n+1} p\|_{L^2} \leq C h^2    (   \| \u\|_{H^2}  + \| p\|_{H^1} ).
\end{align}

The Ritz projection $R_h^{n+1} : \tilde{H} \rightarrow \tilde{H}_h$ with variable coefficient is defined by 
\begin{align}
	(   \nabla (c - R^{n+1}_h c) , \nabla r_h  ) =0, \quad \forall q_h \in \tilde{H}_h,
\end{align}
and there holds \cite{scott2008,heyinnian2015,ravindran2016,girault2012}
\begin{align}
	\| c -R^{n+1}_h c\|_{L^2} + h \| \nabla(c-R^{n+1}_h c )\| \leq C h^2 \| c\|_{H^2},\\
	\| R^{n+1}_h c \|_{L^{\infty}} + \|R^{n+1}_h c \|_{W^{1,4}} \leq C \| c\|_{H^2}.
\end{align}

Denote
\begin{align*}
	&\u^{n+1} -\u^{n+1}_h= \u^{n+1} - \P^{n+1}_h \u^{n+1} + \P^{n+1}_h \u^{n+1}-\u^{n+1}_h = \eta^{n+1}_\u+\e^{n+1}_{\u},\\
	&	p^{n+1} -p^{n+1}_h= p^{n+1} - \rho^{n+1}_h p^{n+1} + \rho^{n+1}_h p^{n+1}-p^{n+1}_h = \eta^{n+1}_p+e^{n+1}_{p},\\
	&	c^{n+1} -c^{n+1}_h= c^{n+1} - R^{n+1}_h c^{n+1} + R^{n+1}_h c^{n+1}-c^{n+1}_h = \eta^{n+1}_c+e^{n+1}_{c}.
\end{align*}

Based on the above foundation, we propose the following BDF2 finite element schemes for $0 \leq n\leq N-1$.
 The second-order backward difference formula is a widely studied and popular approach that significantly improves the accuracy of temporal discretization. This method demonstrates superior stability compared to other classical second-order schemes, such as the Crank-Nicolson/Adams-Bashforth scheme, as highlighted in the literature \cite{heyinnian2007,lixiaoli2022,dingqianqian2024}.

\subsection{The decoupled BDF2 finite element scheme and stability analysis}
\begin{Algorithm}
\textbf{(The decoupled BDF2 finite element schemes)}
\end{Algorithm}
 
 \textbf{Step I:} We find the first step iteration $(\u^1_h,p^1_h)  \in \V_h \times M_h$ by 
 \begin{align}\label{biobdf-53}
 	&(d_t \u^1_h ,\v_h) + \nu(c^0_h+\alpha) (\nabla \u_h^1 ,\nabla \v_h) 	 + B(\u^0_h,\u^{1}_h , \v_h) -(\nabla \cdot \v_h,p^{1}_h) +(\nabla \cdot \u^{1}_h,q_h)\nonumber\\
 	=& -g( (1+\gamma c_h^0)\i_2,\v_h )+(\f^{1},\v_h), \quad \forall ~(\v_h,q_h ) \in \V_h \times M_h.
 \end{align}
 
 and find $c_h^1 \in \tilde{H}_h$ by
 \begin{align}\label{biobdf-13a}
 	(d_\tau c^{1}_h,r_h) + \theta (\nabla c^{1}_h,\nabla r_h ) + b(\u^0_h,c^{1}_h, r_h ) - U(c^0_h ,\frac{\partial r_h}{\partial x_2}) = U\alpha(1,\frac{\partial r_h}{\partial x_2}), \quad \forall ~r_h \in \tilde{H}_h.
 \end{align}

 \textbf{Step II:}
 For $n\geq 2$, we find $(\u^{n+1}_h,p^{n+1}_h)\in \V_h \times M_h $ by 
 \begin{align}\label{biobdf-7}
 	&(D_\tau \u^{n+1}_h ,\v_h) + \nu(\hat{c}^n_h+\alpha) ( \nabla \u^{n+1}_h ,\nabla \v_h) + B(\hat{\u}^n_h,\u^{n+1}_h ,\v_h) -(\nabla \cdot \v_h,p^{n+1}_h) +(\nabla \cdot \u^{n+1}_h,q_h) \nonumber\\
 	=&-g( (1+\gamma \hat{c}^{n}_h) \i_2,\v_h )+(\f^{n+1},\v_h),   \quad \forall (\v_h,q_h ) \in \V_h \times M_h.
 \end{align}
 
 and find $c^{n+1}_h\in \tilde{H}_h$ by
 \begin{align}\label{biobdf-8} 
 	(D_\tau c^{n+1}_h,r_h) + \theta(\nabla c^{n+1}_h,\nabla r_h ) + b(\hat{\u}^n_h,c^{n+1}_h,r_h ) - U(\hat{c}^n_h ,\frac{\partial r_h}{\partial x_2}) = U\alpha(1,\frac{\partial r_h}{\partial x_2}), \quad \forall r_h \in \tilde{H}_h.
 \end{align}

\begin{theorem}\label{biobdf-12}
	Under the condition (\ref{biobdf-5}), we have 
	\begin{align}
		&	\| \u^{m+1}_h \|^2_{L^2} +\| \hat{\u}^{m+1}_h \|^2_{L^2} +	\| c^{m+1}_h \|^2_{L^2} +\| \hat{c}^{m+1}_h \|^2_{L^2} + 2 \tau k^{-1}\sum_{n=1}^{m} \|  \nabla \u^{n+1}_h\|^2_{L^2} +2 \tau \theta \sum_{n=1}^{m} \|  \nabla c^{n+1}_h\|^2_{L^2}  \nonumber \\
		&	\leq  \| \u^1_h\|^2_{L^2}  + \| \hat{\u}^1_h\|^2_{L^2} + \| c^1_h\|^2_{L^2}  + \| \hat{c}^1_h\|^2_{L^2}  +C \tau \sum_{n=0}^{N-1}( \| \f^{n+1}\|_{L^2}^2 + |\Omega|  ),\quad \forall~1\leq m\leq N-1.
	\end{align}
	where $|\Omega|$ is expressed as the measure of $\Omega$.
\end{theorem}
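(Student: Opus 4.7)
The plan is to derive a discrete energy identity by choosing appropriate test functions in (\ref{biobdf-7})--(\ref{biobdf-8}), apply the BDF2 telescoping formula stated just before the scheme, and then close with the discrete Gronwall lemma (Lemma~\ref{biobdf-11}). Specifically, I would take $(\v_h, q_h) = 4\tau(\u^{n+1}_h, p^{n+1}_h)$ in (\ref{biobdf-7}) and $r_h = 4\tau c^{n+1}_h$ in (\ref{biobdf-8}). The pressure coupling cancels because the two terms $-(\nabla\cdot\v_h, p_h^{n+1}) + (\nabla\cdot\u_h^{n+1}, q_h)$ annihilate under this choice; the convective trilinear forms vanish by the skew-symmetry properties (\ref{biobdf-2}), since $B(\hat{\u}^n_h, \u^{n+1}_h, \u^{n+1}_h) = 0$ and $b(\hat{\u}^n_h, c^{n+1}_h, c^{n+1}_h) = 0$; and the viscous and diffusive terms are bounded below using $\nu(\cdot)\geq k^{-1}$ from Assumption~\ref{biobdf-9} and $\theta>0$, giving $4\tau k^{-1}\|\nabla\u^{n+1}_h\|_{L^2}^2$ and $4\tau\theta\|\nabla c^{n+1}_h\|_{L^2}^2$ respectively. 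Applying the telescope formula to $(D_\tau \u^{n+1}_h, \u^{n+1}_h)$ and $(D_\tau c^{n+1}_h, c^{n+1}_h)$ produces the desired $\|\u^{n+1}_h\|_{L^2}^2 - \|\u^n_h\|_{L^2}^2 + \|\hat\u^{n+1}_h\|_{L^2}^2 - \|\hat\u^n_h\|_{L^2}^2$ structure plus a nonnegative third-difference residual that can be discarded.

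Next, I would bound the four forcing-type terms on the right-hand side using Cauchy--Schwarz, Poincaré (so that $\|\u^{n+1}_h\|_{L^2}\leq C\|\nabla\u^{n+1}_h\|_{L^2}$ where useful), and Young's inequality, arranging the constants so that the gradient contributions on the right are strictly less than those already generated on the left. Concretely, for the gravity term, $4\tau|g((1+\gamma\hat c^n_h)\i_2, \u^{n+1}_h)| \leq C\tau|\Omega| + C\tau\|\u^{n+1}_h\|_{L^2}^2 + C\tau\|\hat c^n_h\|_{L^2}^2$; for the external force, $4\tau|(\f^{n+1},\u^{n+1}_h)| \leq C\tau\|\f^{n+1}\|_{L^2}^2 + \tau k^{-1}\|\nabla\u^{n+1}_h\|_{L^2}^2$; for the upwind swimming term, $4\tau U|(\hat c^n_h, \partial_{x_2} c^{n+1}_h)| \leq C\tau\|\hat c^n_h\|_{L^2}^2 + \tau\theta\|\nabla c^{n+1}_h\|_{L^2}^2$; and for the constant source, $4\tau U\alpha|(1, \partial_{x_2} c^{n+1}_h)| \leq C\tau|\Omega| + \tau\theta\|\nabla c^{n+1}_h\|_{L^2}^2$. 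Summing the momentum and concentration identities yields an inequality in which a total of at most $2\tau k^{-1}\|\nabla \u^{n+1}_h\|_{L^2}^2 + 2\tau\theta\|\nabla c^{n+1}_h\|_{L^2}^2$ from the right can be absorbed into the left, leaving coefficients $2\tau k^{-1}$ and $2\tau\theta$ on the gradient terms, matching the statement.

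Finally, I would sum from $n=1$ to $n=m$. The telescoping collapses the $L^2$ norms of $\u^k_h, \hat\u^k_h, c^k_h, \hat c^k_h$ down to their initial values at $k=1$, and the remaining low-order terms $C\tau(\|\u^{n+1}_h\|_{L^2}^2 + \|c^{n+1}_h\|_{L^2}^2 + \|\hat\u^n_h\|_{L^2}^2 + \|\hat c^n_h\|_{L^2}^2)$ feed into the discrete Gronwall inequality with $B = \|\u^1_h\|_{L^2}^2 + \|\hat\u^1_h\|_{L^2}^2 + \|c^1_h\|_{L^2}^2 + \|\hat c^1_h\|_{L^2}^2 + C\tau\sum_{n=0}^{N-1}(\|\f^{n+1}\|_{L^2}^2 + |\Omega|)$. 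Choosing $\tau$ small enough that $\tau\gamma_k\leq 1$ in Lemma~\ref{biobdf-11} gives the stated bound.

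There is no serious obstacle: this is a standard BDF2 energy argument. The only point requiring attention is the bookkeeping of Young-inequality constants so that the gradient terms siphoned from the viscous and diffusive coercivity on the left (namely $4\tau k^{-1}$ and $4\tau\theta$) are reduced only to $2\tau k^{-1}$ and $2\tau\theta$, not absorbed entirely, which is why I would split each Young inequality application with a factor of at most $\tau k^{-1}/2$ or $\tau\theta/2$. The skew-symmetry of $B$ and $b$ is crucial --- it is precisely what removes the need for any CFL-type restriction and makes the estimate unconditional.
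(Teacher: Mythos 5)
Your proposal is correct and follows essentially the same route as the paper: the same test functions $4\tau(\u^{n+1}_h,p^{n+1}_h)$ and $4\tau c^{n+1}_h$, cancellation of the trilinear terms by skew-symmetry (\ref{biobdf-2}), coercivity from (\ref{biobdf-5}), the BDF2 telescope identity, Young/H\"older bounds on the four forcing terms, and the discrete Gronwall lemma. The only differences are trivial bookkeeping choices in how the Young inequalities split between absorbed gradient terms and Gronwall terms.
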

\begin{proof}
	Taking $(\v_h,q_h) = 4\tau (\u^{n+1}_h,p^{n+1}_h), r_h=4\tau c^{n+1}_h$in (\ref{biobdf-7})-(\ref{biobdf-8}), by using (\ref{biobdf-2})  and (\ref{biobdf-5}), summing up (\ref{biobdf-7}) and (\ref{biobdf-8}), we have
	\begin{align}
		&	\| \u^{n+1}_h \|^2_{L^2} - 	\| \u^{n}_h \|^2_{L^2}  + 	\| \hat{\u}^{n+1}_h \|^2_{L^2} -\| \hat{\u}^{n}_h \|^2_{L^2}  + \| \u^{n+1}_h- 2 \u^n_h + \u^{n-1}_h\|_{L^2} + 4 \tau k^{-1} \|  \nabla \u^{n+1}_h\|^2_{L^2}  \nonumber \\
		&+	\| c^{n+1}_h \|^2_{L^2} - 	\| c^{n}_h \|^2_{L^2}  +	\| \hat{c}^{n+1}_h \|^2_{L^2} -\| \hat{c}^{n}_h \|^2_{L^2}  + \| c^{n+1}_h- 2 c^n_h + c^{n-1}_h\|_{L^2} +  4 \tau k^{-1} \|  \nabla c^{n+1}_h\|^2_{L^2}   \label{biobdf-10}\\
		\leq & 4\tau | g( (1+\gamma\hat{c}^n_h)\i_2,\u^{n+1}_h )   | + 4 \tau | (\f^{n+1},\u^{n+1}_h) |+ 4\tau | 	U(\hat{c}^n_h ,\frac{\partial c^{n+1}_h}{\partial x_2})   | + 4 \tau | U\alpha(1,\frac{\partial c^{n+1}_h}{\partial x_2}) |.\nonumber
	\end{align}
	
	Applications of the H\"{o}lder inequality and Young inequality yield
	\begin{align}
		4\tau | g( (1+\gamma\hat{c}^n_h)\i_2,\u^{n+1}_h )   | &\leq C \tau |\Omega|+ C\tau \| \hat{c}^n_h \|^2_{L^2}  + \tau  k^{-1}\|  \nabla \u^{n+1}_h\|^2_{L^2} ,\\
		4 \tau | (\f^{n+1},\u^{n+1}_h) |& \leq C \tau \| \f^{n+1}\|_{L^2}^2 +  \tau  k^{-1}\|  \nabla \u^{n+1}_h\|^2_{L^2},\\
		4\tau | 	U(\hat{c}^n_h ,\frac{\partial c^{n+1}_h}{\partial x_2})   | &\leq C \tau \| \hat{ c}^n_h\|^2_{L^2} + \tau k^{-1} \| \nabla c^{n+1}_h \|_{L^2}^2,\\
		4 \tau | U\alpha(1,\frac{\partial c^{n+1}_h}{\partial x_2}) | &\leq C \tau |\Omega| +\tau k^{-1}  \| \nabla c^{n+1}\|_{L^2}^2.
	\end{align}
	
	Thus substituting the above inequalities into (\ref{biobdf-10}), summing up and by using the discrete Gronwall's inequality  in Lemma \ref{biobdf-11}, we complete the proof of Theorem \ref{biobdf-12}.
	
\end{proof}

\subsection{The fully coupled BDF2 finite element scheme and stability analysis}
\begin{Algorithm}
	\textbf{(The fully coupled BDF2 finite element schemes)}
\end{Algorithm}

\textbf{Step I:} We define the first step iteration $(\u^1_h,p^1_h, c_h^1)  \in \V_h \times M_h \times \tilde{H}_h$ by 
\begin{align}\label{biobdf-53f}
	&(d_t \u^1_h ,\v_h) + \nu(c^0_h+\alpha) (\nabla \u_h^1 ,\nabla \v_h) 	 + B(\u^0_h,\u^{1}_h , \v_h)  -(\nabla \cdot \v_h,p^{1}_h) +(\nabla \cdot \u^{1}_h,q_h)\nonumber\\
	=& -g( (1+\gamma c_h^1)\i_2,\v_h )+(\f^{1},\v_h), \quad \forall ~(\v_h,q_h ) \in \V_h \times M_h.
\end{align}

and 
\begin{align}\label{biobdf-13af}
	(d_\tau c^{1}_h,r_h) + \theta (\nabla c^{1}_h,\nabla r_h ) + b(\u^0_h,c^{1}_h, r_h ) - U(c^0_h ,\frac{\partial r_h}{\partial x_2}) = U\alpha(1,\frac{\partial r_h}{\partial x_2}), \quad \forall ~r_h \in \tilde{H}_h.
\end{align}

\textbf{Step II:}
For $n\geq 2$, we find $(\u^{n+1}_h,p^{n+1}_h,c^{n+1}_h)\in \V_h \times M_h \times \tilde{H}_h$ by 
\begin{align}\label{biobdf-7f}
	&(D_\tau \u^{n+1}_h ,\v_h) + \nu(\hat{c}^n_h+\alpha) ( \nabla \u^{n+1}_h ,\nabla \v_h) + B(\hat{\u}^n_h,\u^{n+1}_h ,\v_h) -(\nabla \cdot \v_h,p^{n+1}_h) +(\nabla \cdot \u^{n+1}_h,q_h) \nonumber\\
	=&-g( (1+\gamma c^{n+1}_h) \i_2,\v_h )+(\f^{n+1},\v_h),   \quad \forall (\v_h,q_h ) \in \V_h \times M_h.
\end{align}

and
\begin{align}\label{biobdf-8f} 
	(D_\tau c^{n+1}_h,r_h) + \theta  (\nabla c^{n+1}_h,\nabla r_h ) + b(\hat{\u}^n_h,c^{n+1}_h,r_h ) - U(c^{n+1}_h ,\frac{\partial r_h}{\partial x_2}) = U\alpha(1,\frac{\partial r_h}{\partial x_2}), \quad \forall r_h \in \tilde{H}_h.
\end{align}
\begin{theorem}\label{biobdf-12f}
	Under the condition (\ref{biobdf-5}), we have 
	\begin{align}
		&	\| \u^{m+1}_h \|^2_{L^2} +\| \hat{\u}^{m+1}_h \|^2_{L^2} +	\| c^{m+1}_h \|^2_{L^2} +\| \hat{c}^{m+1}_h \|^2_{L^2} + 2 \tau k^{-1}\sum_{n=1}^{m} \|  \nabla \u^{n+1}_h\|^2_{L^2} +2 \tau \theta \sum_{n=1}^{m} \|  \nabla c^{n+1}_h\|^2_{L^2}  \nonumber \\
		&	\leq  \| \u^1_h\|^2_{L^2}  + \| \hat{\u}^1_h\|^2_{L^2} + \| c^1_h\|^2_{L^2}  + \| \hat{c}^1_h\|^2_{L^2}  +C \tau \sum_{n=0}^{N-1}( \| \f^{n+1}\|_{L^2}^2 + |\Omega|  ),\quad \forall~1\leq m\leq N-1.
	\end{align}
	where $|\Omega|$ is expressed as the measure of $\Omega$.
\end{theorem}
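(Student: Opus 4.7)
The plan is to follow the proof of Theorem~\ref{biobdf-12} almost verbatim, since the only structural change is that the gravity and upwind coupling terms of the coupled scheme now live at the implicit level $n+1$ rather than at the extrapolation $\hat c_h^n$. I would test (\ref{biobdf-7f}) with $(\v_h,q_h)=4\tau(\u_h^{n+1},p_h^{n+1})$ and (\ref{biobdf-8f}) with $r_h=4\tau c_h^{n+1}$ and add the two identities. The pressure--divergence pair cancels exactly, and the skew-symmetry in (\ref{biobdf-2}) kills both $B(\hat\u_h^n,\u_h^{n+1},\u_h^{n+1})$ and $b(\hat\u_h^n,c_h^{n+1},c_h^{n+1})$.

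Applying the BDF2 telescope identity to $(D_\tau\u_h^{n+1},\u_h^{n+1})$ and $(D_\tau c_h^{n+1},c_h^{n+1})$ delivers the usual positive telescoping quantities
\[\|\u_h^{n+1}\|_{L^2}^2-\|\u_h^n\|_{L^2}^2+\|\hat\u_h^{n+1}\|_{L^2}^2-\|\hat\u_h^n\|_{L^2}^2+\|\u_h^{n+1}-2\u_h^n+\u_h^{n-1}\|_{L^2}^2\]
and its $c$-analogue, while the viscosity lower bound in (\ref{biobdf-5}) together with the diffusivity $\theta$ produce the dissipation $4\tau k^{-1}\|\nabla\u_h^{n+1}\|_{L^2}^2+4\tau\theta\|\nabla c_h^{n+1}\|_{L^2}^2$, reproducing the left-hand side of (\ref{biobdf-10}). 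The only new work is on the right-hand side: where the decoupled proof transmitted coupling through the past-level quantity $\hat c_h^n$, H\"older, Young and Poincar\'e here produce
\[4\tau|g((1+\gamma c_h^{n+1})\i_2,\u_h^{n+1})|\le C\tau|\Omega|+C\tau\|c_h^{n+1}\|_{L^2}^2+\tau k^{-1}\|\nabla\u_h^{n+1}\|_{L^2}^2,\]
\[4\tau|U(c_h^{n+1},\tfrac{\partial c_h^{n+1}}{\partial x_2})|\le C\tau\|c_h^{n+1}\|_{L^2}^2+\tau\theta\|\nabla c_h^{n+1}\|_{L^2}^2,\]
with the gradient pieces absorbed into the dissipation on the left.

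The step that deserves the most attention is the observation that the implicit $L^2$ quantity $\|c_h^{n+1}\|_{L^2}^2$ appearing on the right is precisely what the telescope identity already controls on the left, so it plays the role of $a_{n+1}$ with prefactor $C\tau$ in Lemma~\ref{biobdf-11}; the $\|\Omega|$ and $U\alpha$ contributions, together with the forcing $\|\f^{n+1}\|_{L^2}^2$, form the constant $B$ in that lemma. After summing in $n$, discarding the nonnegative squared increments $\|\u_h^{n+1}-2\u_h^n+\u_h^{n-1}\|_{L^2}^2$ and its $c$-analogue, and invoking the discrete Gronwall inequality, the stated bound follows. The sole admissibility condition $\tau\gamma_k\le 1$ forces $\tau\le\tau_0$ with $\tau_0$ independent of $h$, so no CFL-type relationship with the mesh size is imposed. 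I do not anticipate any genuine obstacle: the implicit treatment is actually slightly cleaner than its extrapolated counterpart, once one notices that the prefactor $C\tau$ on $\|c_h^{n+1}\|_{L^2}^2$ is precisely what Gronwall can absorb.
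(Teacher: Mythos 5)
Your proposal matches the paper's proof essentially step for step: the same test functions $4\tau(\u_h^{n+1},p_h^{n+1})$ and $4\tau c_h^{n+1}$, the same use of skew-symmetry, the telescope identity, and H\"older--Young on the gravity, forcing and upwind terms, followed by summation and the discrete Gronwall inequality. Your explicit remark that the implicit-level quantities $\|c_h^{n+1}\|_{L^2}^2$, $\|\u_h^{n+1}\|_{L^2}^2$ enter the Gronwall sum at index $n+1$ and hence require $\tau\gamma_k\le 1$ is a point the paper leaves implicit, but it does not change the argument.
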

\begin{proof}
	Taking $(\v_h,q_h) = 4\tau (\u^{n+1}_h,p^{n+1}_h), r_h=4\tau c^{n+1}_h$ in (\ref{biobdf-7f})-(\ref{biobdf-8f}), by using (\ref{biobdf-2})  and (\ref{biobdf-5}), summing up (\ref{biobdf-7f}) and (\ref{biobdf-8f}), we have
	\begin{align}
		&	\| \u^{n+1}_h \|^2_{L^2} - 	\| \u^{n}_h \|^2_{L^2}  + 	\| \hat{\u}^{n+1}_h \|^2_{L^2} -\| \hat{\u}^{n}_h \|^2_{L^2}  + \| \u^{n+1}_h- 2 \u^n_h + \u^{n-1}_h\|_{L^2} + 4 \tau k^{-1} \|  \nabla \u^{n+1}_h\|^2_{L^2}  \nonumber \\
		&+	\| c^{n+1}_h \|^2_{L^2} - 	\| c^{n}_h \|^2_{L^2}  +	\| \hat{c}^{n+1}_h \|^2_{L^2} -\| \hat{c}^{n}_h \|^2_{L^2}  + \| c^{n+1}_h- 2 c^n_h + c^{n-1}_h\|_{L^2} +  4 \tau k^{-1} \|  \nabla c^{n+1}_h\|^2_{L^2}   \label{biobdf-10f}\\
		\leq & 4\tau | g( (1+\gamma c^{n+1}_h)\i_2,\u^{n+1}_h )   | + 4 \tau | (\f^{n+1},\u^{n+1}_h) |+ 4\tau | 	U( c^{n+1}_h ,\frac{\partial c^{n+1}_h}{\partial x_2})   | + 4 \tau | U\alpha(1,\frac{\partial c^{n+1}_h}{\partial x_2}) |.\nonumber
	\end{align}
	
	Applications of the H\"{o}lder inequality and Young inequality yield
	\begin{align}
		4\tau | g( (1+\gamma c^{n+1}_h)\i_2,\u^{n+1}_h )   | &\leq C \tau |\Omega|+ C\tau \| c^{n+1}_h \|^2_{L^2}  + \tau  k^{-1}\|  \nabla \u^{n+1}_h\|^2_{L^2} ,\\
		4 \tau | (\f^{n+1},\u^{n+1}_h) |& \leq C \tau \| \f^{n+1}\|_{L^2}^2 + C \tau \|  \u^{n+1}_h\|^2_{L^2},\\
		4\tau | 	U(c^{n+1}_h ,\frac{\partial c^{n+1}_h}{\partial x_2})   | &\leq C \tau \| c^{n+1}_h\|^2_{L^2} + \tau k^{-1} \| \nabla c^{n+1}_h \|_{L^2}^2,\\
		4 \tau | U\alpha(1,\frac{\partial c^{n+1}_h}{\partial x_2}) | &\leq C \tau |\Omega| +\tau k^{-1}  \| \nabla c^{n+1}\|_{L^2}^2.
	\end{align}
	
	Thus substituting the above inequalities into (\ref{biobdf-10f}), summing up and by using the discrete Gronwall's inequality  in Lemma \ref{biobdf-11}, we complete the proof of Theorem \ref{biobdf-12f}.
	
\end{proof}

%

\section{Error estimates for both coupled and decoupled BDF2 finite element schemes}

\subsection{Error estimate for the fully coupled finite element scheme}

\begin{theorem}\label{biobdf-54}
	Under the assumption (\ref{biobdf-5}) and (\ref{biobdf-36}), let $(\u^{i}, p^{i},c^{i})$  and $(\u_h^{i}, p_h^{i},c_h^{i})$ be the solutions of the continuous model (\ref{biobdf-48})- (\ref{biobdf-49}) and the finite element discrete scheme (\ref{biobdf-53}) - (\ref{biobdf-8}) , respectively, there exists some $C_1>0$ independent of $\tau$ and $h$ such that 
	\begin{align}\label{biobdf-55}
		\| \e^{m+1}_{\u} \|^2_{L^2} +\| e^{m+1}_{c} \|^2_{L^2} +\tau k^{-1} \sum_{i=0}^{m}  \| \nabla \e^{i+1}_{\u} \|^2_{L^2} +\tau \theta  \sum_{i=0}^{m} \| \nabla e^{i+1}_{c} \|^2_{L^2} \leq C_1 (\tau^4+h^4), \quad \forall~0 \leq m \leq N-1.
	\end{align}
\end{theorem}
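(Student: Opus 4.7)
The plan is to prove (\ref{biobdf-55}) by induction on $m$, decomposing the errors through the variable-coefficient Stokes and Ritz projections, applying the BDF2 telescope identity to obtain a discrete energy inequality, and closing via the discrete Gronwall Lemma \ref{biobdf-11}. I would decompose $\u^{n+1}-\u^{n+1}_h=\eta^{n+1}_\u+\e^{n+1}_\u$, $p^{n+1}-p^{n+1}_h=\eta^{n+1}_p+e^{n+1}_p$, and $c^{n+1}-c^{n+1}_h=\eta^{n+1}_c+e^{n+1}_c$. The projection parts $\eta$ are already of order $O(h^2)$ in $L^2$ and $O(h)$ in $H^1$ by (\ref{biobdf-56}) and its concentration analogue, so the task reduces to controlling the discrete parts $\e^{n+1}_\u$ and $e^{n+1}_c$.

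First I would subtract the coupled scheme (\ref{biobdf-7f})-(\ref{biobdf-8f}) from the continuous relations (\ref{biobdf-48})-(\ref{biobdf-49}), using the defining identities of $(\P^{n+1}_h,\rho^{n+1}_h)$ and $R^{n+1}_h$ to cancel the principal elliptic parts against $\eta^{n+1}_\u$ and $\eta^{n+1}_c$. Testing with $(\v_h,r_h)=4\tau(\e^{n+1}_\u,e^{n+1}_c)$, using the skew-symmetry (\ref{biobdf-2}) to eliminate $B(\hat{\u}^n_h,\e^{n+1}_\u,\e^{n+1}_\u)$ and $b(\hat{\u}^n_h,e^{n+1}_c,e^{n+1}_c)$, and invoking the BDF2 telescope together with the coercivity $\nu\geq k^{-1}$ from (\ref{biobdf-5}), I obtain a discrete energy inequality whose left side contains the telescoping quantities in $\|\e^{n+1}_\u\|^2_{L^2}+\|\hat{\e}^{n+1}_\u\|^2_{L^2}$, the analogous concentration quantities, and the coercive contributions $4\tau k^{-1}\|\nabla\e^{n+1}_\u\|^2_{L^2}+4\tau\theta\|\nabla e^{n+1}_c\|^2_{L^2}$. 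The right side collects four groups: the consistency residuals $R^{n+1}_\u,R^{n+1}_c$ (contributing $C\tau^4$ after summation via (\ref{biobdf-37})); the time differences $D_\tau\eta^{n+1}_\u,D_\tau\eta^{n+1}_c$ of the projections (yielding $O(h^4)$ terms); the viscosity mismatch $((\nu(c^{n+1}+\alpha)-\nu(\hat{c}^n_h+\alpha))\nabla\u^{n+1},\nabla\e^{n+1}_\u)$; and the convection differences.

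I would then bound the right-hand side using H\"older and Young inequalities. For the viscosity mismatch, Lipschitz continuity from (\ref{biobdf-5}) gives $|\nu(c^{n+1}+\alpha)-\nu(\hat{c}^n_h+\alpha)|\leq\beta(|\hat{\eta}^n_c|+|\hat{e}^n_c|+|c^{n+1}-\hat{c}^n|)$ with $\|c^{n+1}-\hat{c}^n\|_{L^2}\leq C\tau^{3/2}\|c_{tt}\|_{L^2(t_{n-1},t_{n+1};L^2)}$, multiplied by $\|\nabla\u^{n+1}\|_{L^\infty}$ (uniformly bounded by (\ref{biobdf-52}) and Assumption \ref{biobdf-46}) and $\|\nabla\e^{n+1}_\u\|_{L^2}$, the latter absorbed by a fraction of $\tau k^{-1}\|\nabla\e^{n+1}_\u\|^2_{L^2}$. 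The convection difference $B(\u^{n+1},\u^{n+1},\cdot)-B(\hat{\u}^n_h,\u^{n+1}_h,\cdot)$ expands into $B(\hat{\e}^n_\u,\u^{n+1},\cdot)+B(\hat{\eta}^n_\u,\u^{n+1},\cdot)+B(\u^{n+1}-\hat{\u}^n,\u^{n+1},\cdot)+B(\hat{\u}^n_h,\eta^{n+1}_\u,\cdot)$ (the skew-symmetric piece having already been removed), and the analogous split applies to $b$. Each piece is controlled using $\u^{n+1}\in W^{1,\infty}$, the projection bounds, and the BDF2 extrapolation estimate $\|\u^{n+1}-\hat{\u}^n\|_{L^2}=O(\tau^{3/2})$.

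The main obstacle is closing the argument without any restriction linking $\tau$ and $h$, since the factors $\nu(\hat{c}^n_h+\alpha)$ and $B(\hat{\u}^n_h,\cdot,\cdot)$ implicitly require a priori $L^\infty$-control on $\hat{\u}^n_h,\hat{c}^n_h$. I would run a nested induction: assuming (\ref{biobdf-55}) at step $n$, the $L^\infty$-stability of the Stokes and Ritz projections together with the inverse inequalities (\ref{biobdf-16})-(\ref{biobdf-21}) and the embedding (\ref{biobdf-52}) yield $\|\hat{\u}^n_h\|_{L^\infty}+\|\hat{c}^n_h\|_{L^\infty}\leq C$ provided $\tau$ and $h$ are independently small, so the nonlinear constants depend only on the regularity of the exact solution. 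After handling the initial step $n=0$ separately via (\ref{biobdf-50})-(\ref{biobdf-51}) with the $O(\tau^2)$ residual, summing the energy inequality from $n=1$ to $m$, and applying Lemma \ref{biobdf-11}, I recover (\ref{biobdf-55}) at step $m+1$, completing the induction.
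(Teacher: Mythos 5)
Your plan follows essentially the same route as the paper's proof: the $\eta+\e$ splitting via the variable-coefficient Stokes and Ritz projections, testing the error equations with $4\tau(\e^{n+1}_\u,e^{n+1}_c)$ and invoking the BDF2 telescope identity and skew-symmetry of $B$ and $b$, Lipschitz bounds on $\nu$ for the viscosity mismatch, an induction hypothesis combined with inverse inequalities to absorb the terms carrying negative powers of $h$, a separate treatment of the first step with the $O(\tau^2)$ residual, and the discrete Gronwall lemma. The only slip is that you subtract the coupled scheme (\ref{biobdf-7f})--(\ref{biobdf-8f}) where Theorem \ref{biobdf-54} concerns the decoupled scheme (\ref{biobdf-7})--(\ref{biobdf-8}) (so the gravity and upward-swimming terms produce $\hat{e}^n_c$ rather than $e^{n+1}_c$), which does not change the substance of the argument.
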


We prove Theorem \ref{biobdf-54} by the mathematical induction method. Firstly, we need to prove  (\ref{biobdf-55}) is valid for $i=1$. Setting $(\v,q) = (\v_h,q_h)$ and taking the difference between (\ref{biobdf-50}) - (\ref{biobdf-51}) and (\ref{biobdf-53}) - (\ref{biobdf-13a}), we get error equation for $n=0$.
\begin{align}
	&(d_\tau \e^1_{\u}, \v_h) + \nu (c^0 +\alpha)(\nabla \e^1_{\u},\nabla \v_h) -(\nabla \cdot \v_h, e^1_p)+(\nabla e^1_\u,q_h)\nonumber\\
	=& -(d_\tau \eta^1_\u, \v_h) -   (\nu(c^0+\alpha) - \nu(c^0_h+\alpha))  (\nabla (  \eta^1_\u + \e^1_{\u}), \nabla \v_h ) -( (\nu(c^0+\alpha) - \nu(c^0_h+\alpha))   \nabla \u^1, \nabla \v_h) \label{biobdf-24}\\
	&-  B ( \u^0-\u^0_h ,\u^1,\v_h) + B ( \u^0_h,\eta^1_\u + \e^1_{\u},\v_h) + g \gamma ( (c^0-c^0_h)\i_2,\v_h )+(R^1_\u,\v_h),\nonumber\\
		&(d_\tau e^1_{c}, r_h) + \theta (\nabla e^1_{c},\nabla r_h)\nonumber \\
		=& -(d_\tau \eta^1_c, r_h) -  b ( \u^0-\u^0_h ,c^1,r_h) + b( \u^0_h,\eta^1_c + e^1_{c},r_h) + U(c^0-c^0_h,\frac{\partial r_h}{\partial x_2})+(R^1_c ,r_h).\label{biobdf-25}
\end{align}

Setting $ (\v_h,q_h)=2 \tau  (\e^1_{\u},e^1_{p})$ in (\ref{biobdf-24}), by using (\ref{biobdf-26}), H\"{o}lder inequality and Young inequality, we have 
\begin{align}
	2\tau |( d_\tau \eta^1_\u,\e^1_\u )| \leq& C \tau h^4 \| d_t \u^1\|^2_{H^2} +\frac{\tau}{4k} \| \nabla \e^1_\u\|^2_{L^2},\label{biobdf-57}\\
	2\tau|B( \u^0-\u^0_h,\u^1, \e^1_\u)| \leq& C \tau h^4 \| \u^1\|_{H^2}^2 + C \tau \| \e^1_\u \|^2_{L^2},\\
		2\tau|B( \u^0,\eta^1_\u+\e^1_\u,\e^1_\u)| \leq& C\tau \|\e^1_\u\|^2_{L^2} + C \tau h^4 + \frac{\tau}{4k} \| \nabla \e^1_\u\|^2_{L^2},\\
			2\tau | g\gamma ((c^0-c^0_h)\i_2 ,\e^1_\u)  | \leq &C \tau \| \e^1_\u \|^2_{L^2} +C \tau h^4,\\
				2\tau | ((\nu(c^0+\alpha) - \nu(c^0_h+\alpha))  \nabla \u^1, \nabla \e^1_{\u}    ) | \leq& C \tau \| c^0-c^0_h\|_{L^2} \|  \nabla \u^1\|_{L^{\infty}}  \| \nabla \e^1_\u\|_{L^2}\nonumber\\
			\leq & C \tau \| c^0-c^0_h\|^2_{L^2} \| \u^1\|^2_{H^3} + \frac{\tau}{4k}\| \nabla \e^1_\u\|^2_{L^2}\\
			\leq &C \tau h^4 \| \u^1\|^2_{H^3} +  \frac{\tau}{4k}\| \nabla \e^1_\u\|^2_{L^2},\nonumber\\
			2\tau| (	R^{1}_\u, \e^1_\u)  | \leq & C \tau^4+ \frac{1}{4}\| \e^1_\u\|^2_{L^2}.
\end{align}

	By using interpolation inequality (\ref{biobdf-15}) and inverse inequality (\ref{biobdf-16}) , we have 
\begin{align}
	\begin{split}
	&2 \tau | (\nu(c^0+\alpha) - \nu(c^0_h+\alpha)) ( \nabla ( \eta^1_\u + \e^1_\u),\nabla \e^1_\u   )|  \\
	\leq& C \tau \| c^0-c^0_h \|_{L^4} \| \nabla \eta^1_\u\|_{L^2} \| \nabla \e^1_\u\|_{L^4} + C \tau \| c^0-c^0_h \|_{L^2} \| \nabla \e^1_\u \|_{L^{\infty}} \| \nabla \e^1_\u\|_{L^2} \\
	\leq & C \tau h^{\frac{1}{2}} \| c^0-c^0_h\|^{\frac{1}{2}}_{L^2} \| \nabla (c^0-c^0_h)\|^{\frac{1}{2}}_{L^2} \| \nabla \e^1_\u\|_{L^2} + C \tau h \| \| \nabla \e^1_\u \|_{L^2}^2\\
	\leq &C \tau h \| c^0-c^0_h \|_{L^2}\| \nabla (c^0-c^0_h) \|_{L^2} + C \tau h \| \nabla \e^1_\u \|^2_{L^2} + \frac{\tau}{4k} \| \nabla \e^1_\u \|^2_{L^2}\\
	\leq & C \tau h^4  + C \tau h^2 \| \nabla (c^0-c^0_h) \|^2_{L^2} + \frac{\tau}{4k} \| \nabla \e^1_\u \|^2_{L^2}+ C \tau h \| \nabla \e^1_\u \|^2_{L^2} ,
\end{split}
\end{align}

Setting $ r_h=2 \tau  e^1_{c}$ in (\ref{biobdf-25}), by using the same technique, we have
\begin{align}
	2\tau |( d_\tau \eta^1_c,e^1_{c} )| \leq& C \tau h^4 \| d_\tau c^1\|^2_{H^2} +\frac{\tau}{4k} \| \nabla e^1_{c}\|^2_{L^2},\\
	2\tau|b( \u^0-\u^0_h,c^1, e^1_{c})| \leq& C \tau h^4 \| c^1\|_{H^2}^2 + \frac{\tau}{4} \| e^1_{c} \|^2_{L^2},\\
	2\tau|b( \u^0,\eta^1_c+e^1_{c},e^1_{c})| \leq& C\tau \|e^1_{c}\|^2_{L^2} + C \tau h^4 + \frac{\tau}{4k} \| \nabla e^1_{c}\|^2_{L^2},\\
	2\tau| U(c^0-c^0_h,\frac{\partial e^1_c}{\partial x_2})| \leq &C \tau h^4+ \frac{\tau}{4k}\| \nabla e^1_{c}\|^2_{L^2},\\
			2\tau| (	R^{1}_c, e^1_c)  | \leq & C \tau^4+ \frac{1}{4}\| e^1_c\|^2_{L^2}.\label{biobdf-61}
\end{align}

By using (\ref{biobdf-5}) and interpolation error (\ref{biobdf-26}), (\ref{biobdf-27}), summing up (\ref{biobdf-24}), (\ref{biobdf-25}) and substituting the above inequalities into it, we have 

\begin{align}
	\begin{split}
		&\| \e^1_\u\|^2_{L^2} + 2 \tau k^{-1} \| \nabla \e^1_\u\|_{L^2}^2 
		+\| e^1_c\|^2_{L^2}  + 2 \tau \theta \| \nabla e^1_c\|_{L^2}^2\\
		\leq& C \tau h^4( 1+\| d_\tau \u^1\|^2_{H^2} +\| \u^1\|_{H^2}^2+\| d_\tau c^1\|^2_{H^2} +\| c^1\|_{H^2}^2) \\
		&+ C \tau h ( \| \nabla \e^1_\u \|^2_{L^2} +\| \nabla e^1_c \|^2_{L^2} )
		 + C \tau h^2 \| \nabla (c^0-c^0_h) \|^2_{L^2}+C\tau^4.
	\end{split}
\end{align}

Furthermore, for sufficiently small $h < h_1$ with $Ch_1 \leq \min\{ k^{-1}, \theta\} $, $\tau < \tau_1$ with $C\tau_1 \leq \frac{1}{2}$, we can derive 
\begin{align}
	\begin{split}
		&\| \e^1_\u\|^2_{L^2} + \tau k^{-1} \| \nabla \e^1_\u\|_{L^2}^2 
		+\| e^1_c\|^2_{L^2}  +  \tau \theta\| \nabla e^1_c\|_{L^2}^2 \leq C (\tau^4+h^4).
	\end{split}
\end{align}

Next, we will give the convergence analysis for $m\leq n$ with $1\leq n \leq N-1$, we assume that (\ref{biobdf-55}) is valid for $m\leq n-1$ with $1\leq n\leq N-1$ by the method of mathematical induction $i=N-1$, i.e.,
\begin{align}
	\| \e^n_\u \|^2_{L^2} +\| e^n_c\|^2_{L^2}+ 
	\tau \sum_{i=0}^{n}(  \| \nabla \e^{i}_\u \|^2_{L^2} + \| \nabla e^{i}_c\|^2_{L^2}  ) \leq C^2_0 (\tau^4+h^4). \label{biobdf-19}
\end{align}

Setting $(\v,r)=(\v_h,r_h)$ in (\ref{biobdf-48}) - (\ref{biobdf-49}) and by taking the difference between (\ref{biobdf-48}) - (\ref{biobdf-49}) and (\ref{biobdf-7}) - (\ref{biobdf-8}), we derive the following error equations:
\begin{align}
	\begin{split}
		&(D_\tau \e^{n+1}_{\u} , \v_h) + ( \nu(\hat{ c}^n+\alpha)   \nabla \e^{n+1}_{\u} , \nabla \v_h)-(\nabla \cdot \v_h, e^{n+1}_p)+(\nabla e^{n+1}_\u,q_h) \\
		=& - (D_{\tau} \eta^{n+1}_\u ,r_h  )-(  (\nu(\hat{c}^n+\alpha) - \nu(\hat{c}^n_h+\alpha))   \nabla (   \eta^{n+1}_\u +\e^{n+1}_{\u}   ), \nabla \v_h  ) -( (\nu(\hat{c}^n+\alpha) - \nu(\hat{c}^n_h+\alpha))   \nabla \u^{n+1}, \nabla \v_h ) \\
		&-B(\hat{\eta}^n_\u +\hat{e}^n_{\u},\u^{n+1}  , \v_h  ) -B(\hat{\u}^n_h,  \eta^{n+1}_\u +\e^{n+1}_{\u} ,  \v_h ) 
		- (g \gamma ( \hat{\eta}^n_c +\hat{e}^n_{c} )\i_2,\v_h)+<R^{n+1}_\u,v_h>,\\
	&(D_\tau e^{n+1}_{c}  ,r_h) + \theta(  \nabla e^{n+1}_{c} , \nabla r_h) \\
	=& - (D_{\tau}\eta^{n+1}_c,r_h  )-b(\hat{\eta}^n_\u +\hat{e}^n_{\u},c^{n+1}  , r_h  ) -b(\hat{\u}^n_h,  \eta^{n+1}_c +e^{n+1}_{c} ,  r_h ) + U( \hat{\eta}^n_c +\hat{e}^n_{c}  , \frac{\partial r_h}{\partial x_2})+<R^{n+1}_c,r_h>,
		\end{split}
\end{align}

	Setting $(\v_h,q_h)=4\tau (\e^{n+1} _{\u} , e^{n+1}_{p} ),  r_h =4 \tau e^{n+1}_{c} $  and noticing that  (\ref{biobdf-5}), we have 	
		\begin{align}
			&	\| \e^{n+1}_{\u} \|^2_{L^2} - \| \e^n_{\u}\|^2_{L^2} + \| \hat{\e} ^{n+1}_{\u} \|^2_{L^2} - \| \hat{\e}^n_{\u}\|^2_{L^2} + \| \e^{n+1}_{\u} -2 \e^n_{\u} + \e^{n-1}_{\u} \|^2_{L^2} + 4\tau k^{-1} \| \nabla \e^{n+1}_{\u} \|^2_{L^2} \label{biobdf-18}\\
			\leq &4\tau | (D_\tau \eta^{n+1}_\u, \e^{n+1}_{\u} )| 
			+ 4 \tau| (  (\nu(\hat{c}^n+\alpha) - \nu(\hat{c}^n_h+\alpha))   \nabla (   \eta^{n+1}_\u 
			+\e^{n+1}_{{\u}}   ), \nabla \e^{n+1}_{\u}  )| \nonumber
		\\
			&+  4 \tau|( (\nu(\hat{c}^n+\alpha) - \nu(\hat{c}^n_h+\alpha))    \nabla \u^{n+1}, \nabla \e^{n+1}_{\u} ) |+ 4 \tau|  B(\hat{\eta}^n_\u +\hat{e}^n_{\u},\u^{n+1}  , \e^{n+1}_{\u}  )| |\nonumber\\
			&
			+  4 \tau|  B(\hat{\u}^n_h,  \eta^{n+1}_\u +\e^{n+1}_{\u} , \e^{n+1}_{\u}  ) 
			+ 4\tau|(g \gamma ( \hat{\eta}^n_c +\hat{e}^n_{c} )\i_2,\e^{n+1}_{\u} )|+ 4\tau|  <R^{n+1}_\u, \e^{n+1}_\u>| \nonumber,\\
	&	\| e^{n+1}_{c}  \|^2_{L^2} - \| e^n_{c}\|^2_{L^2} + \| \hat{e} ^{n+1}_{c} \|^2_{L^2} - \| \hat{e}^n_{c}\|^2_{L^2} + \| e^{n+1}_{c}  -2 e^n_{c} + e^{n-1}_{c} \|^2_{L^2} + 4\tau \theta \| \nabla e^{n+1}_{c}  \|^2_{L^2}\label{biobdf-22}\\
		\leq &4\tau | (D_\tau \eta^{n+1}_c, e^{n+1}_{c} )| 
+ 4 \tau|     b(\hat{\eta}^n_\u +\hat{e}^n_{\u},c^{n+1}  , e^{n+1}_{c}   ) | +  4 \tau|   b(\hat{\u}^n_h,  \eta^{n+1}_c +e^{n+1}_{c} ,  e^{n+1}_{c} ) |
\nonumber\\
&+ 4\tau U|( \hat{\eta}^n_c +\hat{e}^n_{c}  , \frac{\partial e^{n+1}_{c} }{\partial x_2})|+4\tau|  <R^{n+1}_c, e^{n+1}_c>|.\nonumber
	\end{align}

	Applications of the H\"{o}lder inequality and Young inequality yield
	\begin{align}
		4\tau | (D_\tau \eta^{n+1}_\u, \e^{n+1}_{\u} )|  \leq C \tau h^4 \| D_{\tau} \u^{n+1} \|^2_{H^2} + C \tau \| \e^{n+1}_{\u} \|^2_{L^2},\label{biobdf-59}\\
			4\tau|(g \gamma ( \hat{\eta}^n_c +\hat{e}^n_{c} )\i_2,\e^{n+1}_{\u} )|\leq C \tau \| \hat{\eta}^n_c 
		+\hat{e}^n_{c}\|_{L^2}^2 + C \tau \|\e^{n+1}_\u\|^2_{L^2},\\
		4 \tau | <R^{n+1}_\u,\e^{n+1}_\u>| \leq C \tau \| R^{n+1}_\u \|_{H^{-1}}^2 + \frac{\tau}{4k} \| \nabla \e^{n+1}_\u|^2_{L^2}.
	\end{align}
	
	By using (\ref{biobdf-5}), (\ref{biobdf-56}), interpolation inequality (\ref{biobdf-15}) and inverse inequality (\ref{biobdf-16}) , we have 
	\begin{align}
		\begin{split}
&4 \tau| (    (\nu(\hat{c}^n+\alpha) - \nu(\hat{c}^n_h+\alpha))  \nabla (   \eta^{n+1}_\u 
+\e^{n+1}_{{\u}}   ), \nabla \e^{n+1}_{\u}  )| \\
\leq& C \tau \| \hat{\eta}^n_c +\hat{e}^n_{c}  \|_{L^4} \|  \nabla  \eta^{n+1}_\u \|_{L^2}\| \nabla \e^{n+1}_{{\u}} \|_{L^4}
+C \tau \| \hat{\eta}^n_c +\hat{e}^n_{c}\|_{L^2}\|\nabla  \e^{n+1}_{{\u}} \|_{L^{\infty}}  \|\e^{n+1}_{{\u}} \|_{L^2}\\
\leq &C \tau h^{\frac{1}{2}} \| \hat{\eta}^n_c +\hat{e}^n_{c}  \|^{\frac{1}{2}}_{L^2}\|   \nabla (\hat{\eta}^n_c +\hat{e}^n_{c} ) \|^{\frac{1}{2}}_{L^2} \| \nabla \e^{n+1}_{\u} \|_{L^2} + C \tau h^{-1} \| \hat{\eta}^n_c +\hat{e}^n_{c}\|_{L^2}\| \nabla \e^{n+1}_{{\u}}\|^2_{L^2}\\
\leq &\frac{\tau}{4k} \| \nabla \e^{n+1}_{\u} \|^2_{L^2} + C \tau h \|\hat{\eta}^n_c +\hat{e}^n_{c}\|_{L^2} \| \nabla(\hat{\eta}^n_c +\hat{e}^n_{c})\|_{L^2}+C \tau h^{-1} \| \hat{\eta}^n_c +\hat{e}^n_{c}\|_{L^2}\| \nabla \e^{n+1}_{\u} \|^2_{L^2}\\
\leq &  \frac{\tau}{4k} \| \nabla \e^{n+1}_{\u} \|^2_{L^2} + C \tau \| \hat{\eta}^n_c +\hat{e}^n_{c}\|^2_{L^2} + C \tau h^2 \| \nabla(\hat{\eta}^n_c +\hat{e}^n_{c}) \|^2_{L^2} +C \tau h^{-1} \| \hat{\eta}^n_c +\hat{e}^n_{c}\|_{L^2}\| \nabla \e^{n+1}_{\u} \|^2_{L^2}.
		\end{split}
	\end{align}
	
Applying (\ref{biobdf-5}), (\ref{biobdf-52}), one has 
\begin{align}
	\begin{split}
&4 \tau|(   (\nu(\hat{c}^n+\alpha) - \nu(\hat{c}^n_h+\alpha))  \nabla \u^{n+1}, \nabla \e^{n+1}_{\u} ) | +4 \tau|  B(\hat{\eta}^n_\u +\hat{e}^n_{\u},\u^{n+1}  , \e^{n+1}_{\u}  )|  \\
\leq& C \tau \|  \hat{\eta}^n_c 
+\hat{e}^n_{c}  \|_{L^2} \| \nabla \u^{n+1}\|_{L^{\infty}} \|  \nabla \e^{n+1}_{\u} \|_{L^2}+ C \tau \|  \hat{\eta}^n_c 
+\hat{e}^n_{c}   \|_{L^2} \| \nabla \u^{n+1}\|_{L^{\infty}} \|  \nabla \e^{n+1}_{\u} \|_{L^2}\\
\leq &C \tau \| \hat{\eta}^n_c 
+\hat{e}^n_{c}\|_{L^2}^2+ \frac{\tau}{4k}\|  \nabla \e^{n+1}_{\u} \|_{L^2}.
		\end{split}
\end{align}

Noticing that (\ref{biobdf-2}), (\ref{biobdf-17}) we can get
\begin{align}
		\begin{split}
 &4 \tau|  B(\hat{\u}^n_h,  \eta^{n+1}_\u +\e^{n+1}_{\u} , \e^{n+1}_{\u} )|\\
 \leq &4 \tau |B (\hat{\eta}^{n}_\u +\hat{\e}^{n}_{\u},   \eta^{n+1}_\u +\e^{n+1}_{\u} ,\e^{n+1}_{\u} )| 
 +4 \tau |B ( \hat{\u}^n,   \eta^{n+1}_\u +\e^{n+1}_{\u} ,\e^{n+1}_{\u} )|\\
 \leq &C \tau \| \hat{\eta}^{n}_\u +\hat{\e}^{n}_{\u} \|_{L^4} \| \nabla \eta^{n+1}_\u\|_{L^2}\|\e^{n+1}_{\u} \|_{L^4} + C\tau \|\hat{\u}^n\|_{L^{\infty}} \| \nabla \e^{n+1}_{\u} \|_{L^2} \| \eta^{n+1}_\u\|_{L^2}\\
 \leq & \frac{\tau}{4k} \| \nabla \e^{n+1}_{\u} \|^2_{L^2}+ C\tau h^2 \|  \hat{\eta}^{n}_\u +\hat{\e}^{n}_{\u} \|_{L^2}\| \nabla(\hat{\eta}^{n}_\u +\hat{\e}^{n}_{\u})\|_{L^2} +C\tau h^4\\
 \leq & \frac{\tau}{4k} \| \nabla \e^{n+1}_{\u} \|^2_{L^2} +C\tau \|  \hat{\eta}^{n}_\u +\hat{\e}^{n}_{\u} \|^2_{L^2} + C\tau h^4 \| \nabla (  \hat{\eta}^{n}_\u +\hat{\e}^{n}_{\u} )\|^2_{L^2} + C\tau h^4.
 		\end{split}
\end{align}

Substituting the above inequalities into (\ref{biobdf-18}), we can derive 
\begin{align}\label{biobdf-22}
			\begin{split}
	&	\| \e^{n+1}_{\u} \|^2_{L^2} - \| \e^n_{\u}\|^2_{L^2} + \| \hat{\e} ^{n+1}_{\u} \|^2_{L^2} - \| \hat{\e}^n_{\u}\|^2_{L^2} + \| \e^{n+1}_{\u} -2 \e^n_{\u} + \e^{n-1}_{\u} \|^2_{L^2} + 4\tau k^{-1} \| \nabla \e^{n+1}_{\u} \|^2_{L^2}\\
		\leq &C \tau h^4(1+ \| D_{\tau} \u^{n+1} \|^2_{H^2} )+ C \tau \| \e^{n+1}_{\u} \|_{L^2}+C \tau h^{-1} \| \hat{\eta}^n_c +\hat{e}^n_{c}\|_{L^2}\| \nabla \e^{n+1}_{\u} \|^2_{L^2}+C \tau \| R^{n+1}_\u \|_{H^{-1}}^2 \\
		& + C\tau (\| \hat{\eta}^n_c +\hat{e}^n_{c}\|^2_{L^2} +\|  \hat{\eta}^{n}_\u +\hat{\e}^{n}_{\u}\|^2_{L^2})+C \tau h^2 \| \nabla(\hat{\eta}^n_c +\hat{e}^n_{c}) \|^2_{L^2}+C\tau h^4 \| \nabla ( \hat{\eta}^{n}_\u +\hat{\e}^{n}_{\u}  )\|^2_{L^2}.
		 		\end{split}
\end{align}

Next, we estimate $e^{n+1}_c$ by the same method, applications of the H\"{o}lder inequality and Young inequality yield
\begin{align}
	4\tau | (D_\tau \eta^{n+1}_c, e^{n+1}_{c} )|  \leq C \tau h^4 \| D_{\tau} c^{n+1} \|^2_{H^2} + C \tau \| e^{n+1}_{c}  \|_{L^2},\\
4\tau|( \hat{\eta}^n_c +\hat{e}^n_{c}  , \frac{\partial e^{n+1}_{c} }{\partial x_2})| \leq C \tau \| \hat{\eta}^n_c +\hat{e}^n_{c} \|^2_{L^2} + \frac{\tau}{4k} \| \nabla e^{n+1}_c \|^2_{L^2},\\
4 \tau  | <R^{n+1}_c , e^{n+1}_c> | \leq C \tau \| R^{n+1}\|_{H^{-1}}^2 + \frac{\tau}{4k} \| \nabla e^{n+1}_c\|^2_{L^2}. 
\end{align}
%

Applying (\ref{biobdf-47}), one has 
\begin{align}\label{biobdf-62}
	\begin{split}
		&4 \tau|  b(\hat{\eta}^n_\u +\hat{e}^n_{\u},c^{n+1}  , e^{n+1}_{c}   )|  \\
		\leq&  C \tau \|  \hat{\eta}^n_c 
		+\hat{e}^n_{c}   \|_{L^2} \| \nabla c^{n+1}\|_{L^{\infty}} \|  \nabla e^{n+1}_{c} \|_{L^2}\\
		\leq &C \tau \| \hat{\eta}^n_c 
		+\hat{e}^n_{c}\|_{L^2}^2+ \frac{\tau}{4k}\|  \nabla e^{n+1}_{c} \|^2_{L^2}.
	\end{split}
\end{align}
%

Substituting the above inequalities into (\ref{biobdf-22}), we can derive 
\begin{align}\label{biobdf-23}
	\begin{split}
		&	\| e^{n+1}_{c}  \|^2_{L^2} - \|e^n_{c}\|^2_{L^2} + \| \hat{e} ^{n+1}_{c} \|^2_{L^2} - \| \hat{e}^n_{c}\|^2_{L^2} + \| e^{n+1}_{c}  -2 e^n_{c} + e^{n-1}_{c} \|^2_{L^2} + 4\tau k^{-1} \| \nabla e^{n+1}_{c}  \|^2_{L^2}\\
		\leq &C \tau h^4(1+ \| D_{\tau} c^{n+1} \|^2_{H^2} )+ C \tau \| e^{n+1}_{c}  \|_{L^2}+C \tau h^{-1} \| \hat{\eta}^n_c +\hat{e}^n_{c}\|_{L^2}\| \nabla e^{n+1}_{c} \|^2_{L^2}\\
		& + C\tau (\| \hat{\eta}^n_c +\hat{e}^n_{c}\|^2_{L^2} +\|  \hat{\eta}^{n}_\u +\hat{\e}^{n}_{\u}\|^2_{L^2})+C \tau h^2 \| \nabla(\hat{\eta}^n_c +\hat{e}^n_{c}) \|^2_{L^2}+C\tau h^4 \| \nabla ( \hat{\eta}^{n}_\u +\hat{\e}^{n}_{\u}  )\|^2_{L^2}.
	\end{split}
\end{align}

Thus summing (\ref{biobdf-22}) and (\ref{biobdf-23}), by using (\ref{biobdf-19}), for sufficiently small $h < h_1$ with $Ch_1 \leq k^{-1}$, we can get 
$$C \tau h^{-1} \| \hat{\eta}^n_c +\hat{e}^n_{c}\|_{L^2} (\| \nabla \e^{n+1}_{\u} \|_{L^2}+\| \nabla e^{n+1}_{c} \|_{L^2})\leq  \tau k^{-1} (\| \nabla \e^{n+1}_{\u} \|_{L^2}+\| \nabla e^{n+1}_{c} \|_{L^2}).$$

Furthermore
\begin{align}
		\begin{split}
&	\| \e^{n+1}_{\u} \|^2_{L^2} - \| \e^n_{\u}\|^2_{L^2} + \| \hat{\e} ^{n+1}_{\u} \|^2_{L^2} - \| \hat{\e}^n_{\u}\|^2_{L^2} + \| \e^{n+1}_{\u} -2 \e^n_{\u} + \e^{n-1}_{\u} \|^2_{L^2} + 4\tau k^{-1} \| \nabla \e^{n+1}_{\u} \|^2_{L^2}\\
		&+\| e^{n+1}_{c}  \|^2_{L^2} - \| e^n_{c}\|^2_{L^2} + \| \hat{e} ^{n+1}_{c} \|^2_{L^2} - \| \hat{e}^n_{c}\|^2_{L^2} + \| e^{n+1}_{c}  -2 e^n_{c} + e^{n-1}_{c} \|^2_{L^2} + 4\tau k^{-1} \| \nabla e^{n+1}_{c}  \|^2_{L^2}\\
		\leq & C\tau(\|\e^{n+1}_{\u} \|^2_{L^2}+  \|e^{n+1}_{c}  \|^2_{L^2}  )+C \tau h^4(1+ \| D_{\tau} \u^{n+1} \|^2_{H^2} +\| D_{\tau} c^{n+1} \|^2_{H^2}) \\
		&+ C\tau h^2( \| \nabla \hat{e}^n_c\|^2_{L^2}  + \|\nabla \hat{\e}^n_\u\|^2_{L^2}  ) +C \tau (\|  \hat{e}^n_c\|^2_{L^2}  + \| \hat{\e}^n_\u\|^2_{L^2})
		\end{split}
\end{align}

Taking the sum and applying the discrete Gronwall's inequality in Lemma \ref{biobdf-11}, there exists some $C_1>0$ independent of $\tau$ and $h$ such that 
\begin{align}
		\| \e^{n+1}_{\u} \|^2_{L^2} +\| e^{n+1}_{c}  \|^2_{L^2} +\tau k^{-1} \sum_{m=0}^{n}  \| \nabla \e^{n+1}_{\u} \|^2_{L^2}+  \tau \theta  \sum_{m=0}^{n} \| \nabla e^{n+1}_{c}  \|^2_{L^2}  \leq C_1 (\tau^4+h^4).
\end{align}

\subsection{Error estimate for fully coupled finite element scheme}

\begin{theorem}\label{biobdf-54f}
	Under the assumption (\ref{biobdf-5}) and (\ref{biobdf-36}), let $(\u^{i}, p^{i},c^{i})$  and $(\u_h^{i}, p_h^{i},c_h^{i})$ are the solutions of the continuous model (\ref{biobdf-48})- (\ref{biobdf-49}) and the finite element discrete scheme (\ref{biobdf-53f}) - (\ref{biobdf-8f}) , respectively, there exists some $C_1>0$ independent of $\tau$ and $h$ such that 
	\begin{align}\label{biobdf-55f}
		\| \e^{m+1}_{\u} \|^2_{L^2} +\| e^{m+1}_{c} \|^2_{L^2} +\tau k^{-1} \sum_{i=0}^{m} (  \| \nabla \e^{i+1}_{\u} \|^2_{L^2}+ \| \nabla e^{i+1}_{c} \|^2_{L^2}) \leq C_1 (\tau^4+h^4), \quad \forall~0 \leq m \leq N-1.
	\end{align}
\end{theorem}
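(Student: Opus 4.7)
The plan is to mirror the induction proof of Theorem \ref{biobdf-54}, since the coupled scheme (\ref{biobdf-7f})--(\ref{biobdf-8f}) differs from the decoupled one only in that the gravity source term uses $c^{n+1}_h$ instead of $\hat{c}^n_h$, and the upwind-swimming drift term uses $c^{n+1}_h$ instead of $\hat{c}^n_h$. The viscosity $\nu(\hat{c}^n_h+\alpha)$ and the convection fields $\hat{\u}^n_h$ are unchanged, so all the bounds already obtained for the decoupled case carry over verbatim except for the two source/drift contributions, which must now be re-estimated at the new time level $t_{n+1}$.

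First, I would set up the induction on $m$. For $m=0$ the first-step scheme (\ref{biobdf-53f})--(\ref{biobdf-13af}) is actually identical to (\ref{biobdf-53})--(\ref{biobdf-13a}) up to a single different source term $g\gamma((c^1-c^1_h)\i_2,\v_h)$. Testing with $(\v_h,q_h,r_h)=2\tau(\e^1_\u,e^1_p,e^1_c)$ and repeating bounds (\ref{biobdf-57})--(\ref{biobdf-61}), the only new contribution is
\begin{align*}
2\tau|g\gamma((\eta^1_c+e^1_c)\i_2,\e^1_\u)| \le C\tau h^4 + C\tau(\|e^1_c\|^2_{L^2}+\|\e^1_\u\|^2_{L^2}),
\end{align*}
which is absorbed by $\|\e^1_\u\|^2_{L^2}+\|e^1_c\|^2_{L^2}$ on the left after choosing $\tau$ small. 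This yields the base case $\|\e^1_\u\|^2_{L^2}+\|e^1_c\|^2_{L^2}+\tau k^{-1}\|\nabla\e^1_\u\|^2_{L^2}+\tau\theta\|\nabla e^1_c\|^2_{L^2}\le C(\tau^4+h^4)$.

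For the induction step, assuming (\ref{biobdf-55f}) holds for $m\le n-1$, I would subtract (\ref{biobdf-7f})--(\ref{biobdf-8f}) from (\ref{biobdf-48})--(\ref{biobdf-49}) and test with $(\v_h,q_h,r_h)=4\tau(\e^{n+1}_\u,e^{n+1}_p,e^{n+1}_c)$. All the viscosity, convection, projection-error, interpolation, and truncation estimates of Theorem \ref{biobdf-54} transfer unchanged, and give the contributions displayed in (\ref{biobdf-22})--(\ref{biobdf-23}). The two new terms to be handled are
\begin{align*}
4\tau|g\gamma((\eta^{n+1}_c+e^{n+1}_c)\i_2,\e^{n+1}_\u)| &\le C\tau h^4\|c^{n+1}\|^2_{H^2}+C\tau(\|e^{n+1}_c\|^2_{L^2}+\|\e^{n+1}_\u\|^2_{L^2}),\\
4\tau U|(\eta^{n+1}_c+e^{n+1}_c,\tfrac{\partial e^{n+1}_c}{\partial x_2})| &\le C\tau h^4\|c^{n+1}\|^2_{H^2}+C\tau\|e^{n+1}_c\|^2_{L^2}+\tfrac{\tau\theta}{4}\|\nabla e^{n+1}_c\|^2_{L^2},
\end{align*}
using Cauchy--Schwarz, Young's inequality, and (\ref{biobdf-56}); the diffusive part $\tfrac{\tau\theta}{4}\|\nabla e^{n+1}_c\|^2_{L^2}$ is absorbed on the left. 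Summing over $n$, applying (\ref{biobdf-37}), using the bound $C\tau h^{-1}\|\hat\eta^n_c+\hat e^n_c\|_{L^2}\le \tau k^{-1}$ (which follows from the induction hypothesis (\ref{biobdf-19}) for small enough $h$ via inverse estimates), and invoking the discrete Gronwall's inequality (Lemma \ref{biobdf-11}) closes the induction.

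The main obstacle is that the source and drift terms now involve $e^{n+1}_c$ at the same time level as $\e^{n+1}_\u$ and $e^{n+1}_c$ itself, so unlike the decoupled case these cannot be placed on the right-hand side as ``lower-order'' data depending only on $\hat c^n$. One must instead treat them as perturbations of order $\tau\|e^{n+1}_c\|^2+\tau\|\e^{n+1}_\u\|^2$ and rely on Gronwall to absorb them. For the drift term $U(c^{n+1}-c^{n+1}_h,\partial e^{n+1}_c/\partial x_2)$ in particular, one must resist the temptation to extract a full $\|\nabla e^{n+1}_c\|_{L^2}$ factor; instead the split $\|e^{n+1}_c\|_{L^2}\cdot\|\partial e^{n+1}_c/\partial x_2\|_{L^2}$ followed by Young's inequality with a small coefficient on the gradient is essential, so that $\tau\theta$ on the left dominates. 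No spatial--temporal restriction of the form $\tau\le C h^\alpha$ arises in this argument, giving the unconditional optimal rate $\tau^2+h^2$ in $L^2$.
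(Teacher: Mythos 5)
Your proposal follows essentially the same route as the paper: the same induction structure, the same identification of the two new terms $g\gamma((\eta^{n+1}_c+e^{n+1}_c)\i_2,\e^{n+1}_\u)$ and $U(\eta^{n+1}_c+e^{n+1}_c,\partial e^{n+1}_c/\partial x_2)$ as the only changes relative to Theorem \ref{biobdf-54}, the same Young-inequality splits absorbing the gradient part on the left and the $L^2$ parts via discrete Gronwall, and the same use of the induction hypothesis with inverse estimates to control the $C\tau h^{-1}$ term. The argument is correct and matches the paper's proof.
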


We prove Theorem \ref{biobdf-54f} by the mathematical induction method. Firstly, we need to prove  (\ref{biobdf-55f}) is valid for $i=1$. Taking the difference between (\ref{biobdf-31}) - (\ref{biobdf-38}) and (\ref{biobdf-53f}) - (\ref{biobdf-13af}), we get error equation for $n=0$.
\begin{align}
&(d_\tau \e^1_{\u}, \v_h) + \nu (c^0 +\alpha)(\nabla \e^1_{\u},\nabla \v_h) -(\nabla \cdot \v_h, e^1_p)+(\nabla e^1_\u,q_h)\nonumber\\
=& -(d_\tau \eta^1_\u, \v_h) - (\nu(c^0+\alpha) - \nu(c^0_h+\alpha))  (\nabla (  \eta^1_\u + \e^1_{\u}), \nabla \v_h ) -(  (\nu(c^0+\alpha) - \nu(c^0_h+\alpha))   \nabla \u^1, \nabla \v_h) \label{biobdf-24f}\\
&-  B ( \u^0-\u^0_h ,\u^1,\v_h) + B ( \u^0_h,\eta^1_\u + \e^1_{\u},\v_h) + g \gamma ( (\eta^1_c+e^1_c)\i_2,\v_h )+(R^1_\u,\v_h),\nonumber\\
&(d_\tau e^1_{c}, r_h) + \theta (\nabla e^1_{c},\nabla r_h)\nonumber \\
=& -(d_\tau \eta^1_c, r_h) -  b ( \u^0-\u^0_h ,c^1,r_h) + b( \u^0_h,\eta^1_c + e^1_{c},r_h) + U(\eta^1_c+e^1_c,\frac{\partial r_h}{\partial x_2})+(R^1_c ,r_h).\label{biobdf-25f}
\end{align}

Based the above inequalities (\ref{biobdf-57}) - (\ref{biobdf-61}) and 
\begin{align}
	2\tau | g\gamma ((\eta^1_c+e^1_c)\i_2 ,\e^1_\u)  | \leq C \tau ( \| \e^1_\u \|^2_{L^2} + \| e^1_c \|^2_{L^2})+C \tau h^4,\\
		2\tau| U(\eta^1_c+e^1_c,\frac{\partial e^1_c}{\partial x_2})| \leq C \tau h^4+ C\tau \|e^1_c\|^2_{L^2}+ \frac{\tau}{4k}\| \nabla e^1_{c}\|^2_{L^2},
\end{align}

By using (\ref{biobdf-5}) and interpolation error (\ref{biobdf-26}), (\ref{biobdf-27}), summing up (\ref{biobdf-24f}), (\ref{biobdf-25f}) and substituting the above inequalities into it, we have 

\begin{align}
	\begin{split}
		&\| \e^1_\u\|^2_{L^2} + 2 \tau k^{-1} \| \nabla \e^1_\u\|_{L^2}^2 
		+\| e^1_c\|^2_{L^2}  + 2 \tau \theta \| \nabla e^1_c\|_{L^2}^2\\
		\leq& C \tau h^4( 1+\| d_\tau \u^1\|^2_{H^2} +\| \u^1\|_{H^2}^2+\| d_\tau c^1\|^2_{H^2} +\| c^1\|_{H^2}^2) +C \tau (\| \e^1_\u \|^2_{L^2}
		+\| e^1_c \|^2_{L^2})\\
		&+ C \tau h ( \| \nabla \e^1_\u \|^2_{L^2} +\| \nabla e^1_c \|^2_{L^2} )
		+ C \tau h^2 \| \nabla (c^0-c^0_h) \|^2_{L^2}+C\tau^4.
	\end{split}
\end{align}

Furthermore, for sufficiently small $h < h_1$ with $Ch_1 \leq k^{-1}$, $\tau < \tau_1$ with $C\tau_1 \leq \frac{1}{2}$, we can derive 
\begin{align}
	\begin{split}
		&\| \e^1_\u\|^2_{L^2} + \tau k^{-1} \| \nabla \e^1_\u\|_{L^2}^2 
		+\| e^1_c\|^2_{L^2}  +  \tau \theta \| \nabla e^1_c\|_{L^2}^2 \leq C (\tau^4+h^4).
	\end{split}
\end{align}

Setting $(\v,r)=(\v_h,r_h)$ in (\ref{biobdf-48}) - (\ref{biobdf-49}) and by taking the difference between (\ref{biobdf-48}) - (\ref{biobdf-49}) and (\ref{biobdf-7f}) - (\ref{biobdf-8f}), we derive the following error equations:
\begin{align}
	\begin{split}
		&(D_\tau \e^{n+1}_{\u} , \v_h) + ( \nu(\hat{ c}^n+\alpha)   \nabla \e^{n+1}_{\u} , \nabla \v_h) -(\nabla \cdot \v_h, e^{n+1}_p)+(\nabla e^{n+1}_\u,q_h)\\
		=& - (D_{\tau} \eta^{n+1}_\u ,r_h  )-((\nu(\hat{c}^n+\alpha) - \nu(\hat{c}^n_h+\alpha)) \nabla (   \eta^{n+1}_\u +\e^{n+1}_{\u}   ), \nabla \v_h  ) -(  (\nu(\hat{c}^n+\alpha) - \nu(\hat{c}^n_h+\alpha))  \nabla \u^{n+1}, \nabla \v_h ) \\
		&-B(\hat{\eta}^n_\u +\hat{e}^n_{\u},\u^{n+1}  , \v_h  ) -B(\hat{\u}^n_h,  \eta^{n+1}_\u +\e^{n+1}_{\u} ,  r_h ) 
		- (g \gamma ( \eta^{n+1}_c +e^{n+1}_{c} )\i_2,\v_h)+<R^{n+1}_\u,v_h>,\\
		&(D_\tau e^{n+1}_{c}  ,r_h) + \theta (\nabla e^{n+1}_{c} , \nabla r_h) \\
		=& - (D_{\tau}\eta^{n+1}_c,r_h  )-b(\hat{\eta}^n_\u +\hat{e}^n_{\u},c^{n+1}  , r_h  ) -b(\hat{\u}^n_h,  \eta^{n+1}_c +e^{n+1}_{c} ,  r_h ) + ( \eta^{n+1}_c +e^{n+1}_{c}, \frac{\partial r_h}{\partial x_2})+<R^{n+1}_c,r_h>,
	\end{split}
\end{align}

Setting $(\v_h,q_h)=4\tau (\e^{n+1} _{\u} , e^{n+1}_{p} ),  r_h =4 \tau e^{n+1}_{c} $  and noticing that  (\ref{biobdf-5}), we have 	
\begin{align}
	&	\| \e^{n+1}_{\u} \|^2_{L^2} - \| \e^n_{\u}\|^2_{L^2} + \| \hat{\e} ^{n+1}_{\u} \|^2_{L^2} - \| \hat{\e}^n_{\u}\|^2_{L^2} + \| \e^{n+1}_{\u} -2 \e^n_{\u} + \e^{n-1}_{\u} \|^2_{L^2} + 4\tau k^{-1} \| \nabla \e^{n+1}_{\u} \|^2_{L^2} \label{biobdf-18f}\\
	\leq &4\tau | (D_\tau \eta^{n+1}_\u, \e^{n+1}_{\u} )| 
	+ 4 \tau| (\nu (\hat{\eta}^n_c +\hat{e}^n_{c}  )  \nabla (   \eta^{n+1}_\u 
	+\e^{n+1}_{{\u}}   ), \nabla \e^{n+1}_{\u}  )|
	+  4 \tau|(  \nu (\hat{\eta}^n_c 
	+\hat{e}^n_{c}  )   \nabla \u^{n+1}, \nabla \e^{n+1}_{\u} ) |\nonumber\\
	&+ 4 \tau|  B(\hat{\eta}^n_\u +\hat{e}^n_{\u},\u^{n+1}  , \e^{n+1}_{\u}  )| 
	+  4 \tau|  B(\hat{\u}^n_h,  \eta^{n+1}_\u +\e^{n+1}_{\u} , \e^{n+1}_{\u}  ) |\notag \\
	&
	+ 4\tau|(g \gamma ( \eta^{n+1}_c +e^{n+1}_{c} )\i_2,\e^{n+1}_{\u} )|+ 4\tau|  <R^{n+1}_\u, \e^{n+1}_\u>| \nonumber,\\
	&	\| e^{n+1}_{c}  \|^2_{L^2} - \| e^n_{c}\|^2_{L^2} + \| \hat{e} ^{n+1}_{c} \|^2_{L^2} - \| \hat{e}^n_{c}\|^2_{L^2} + \| e^{n+1}_{c}  -2 e^n_{c} + e^{n-1}_{c} \|^2_{L^2} 
	+ 4\tau \theta \| \nabla e^{n+1}_{c}  \|^2_{L^2}\label{biobdf-22f}\\
	\leq &4\tau | (D_\tau \eta^{n+1}_c, e^{n+1}_{c} )|  + 4 \tau|     b(\hat{\eta}^n_\u +\hat{e}^n_{\u},c^{n+1}  , e^{n+1}_{c}   ) | +  4 \tau|   b(\hat{\u}^n_h,  \eta^{n+1}_c +e^{n+1}_{c} ,  e^{n+1}_{c} ) |\nonumber\\
	&+ 4\tau|( \eta^{n+1}_c +e^{n+1}_{c}, \frac{\partial e^{n+1}_{c} }{\partial x_2})|+4\tau|  <R^{n+1}_c, e^{n+1}_c>|.\nonumber
\end{align}

Based the above inequalities (\ref{biobdf-59}) - (\ref{biobdf-62}) and 
\begin{align}
	2\tau | g\gamma (\eta^{n+1}_c +e^{n+1}_{c})\i_2 ,\e^{n+1}_\u)  | \leq C \tau ( \| \e^{n+1}_\u \|^2_{L^2} + \| e^{n+1}_c \|^2_{L^2})+C \tau h^4,\\
	2\tau| U(\eta^{n+1}_c +e^{n+1}_{c},\frac{\partial e^{n+1}_c}{\partial x_2})| \leq C \tau h^4+ C\tau \|e^{n+1}_c\|^2_{L^2}+ \frac{\tau}{4k}\| \nabla e^{n+1}_{c}\|^2_{L^2},
\end{align}

Thus summing (\ref{biobdf-18f}) and (\ref{biobdf-22f}), by using (\ref{biobdf-19}), for sufficiently small $h < h_1$ with $Ch_1 \leq k^{-1}$, we can get 
$$C \tau h^{-1} \| \hat{\eta}^n_c +\hat{e}^n_{c}\|_{L^2} (\| \nabla \e^{n+1}_{\u} \|_{L^2}+\| \nabla e^{n+1}_{c} \|_{L^2})\leq  \tau k^{-1} (\| \nabla \e^{n+1}_{\u} \|_{L^2}+\| \nabla e^{n+1}_{c} \|_{L^2}).$$

Furthermore
\begin{align}
	\begin{split}
		&	\| \e^{n+1}_{\u} \|^2_{L^2} - \| \e^n_{\u}\|^2_{L^2} + \| \hat{\e} ^{n+1}_{\u} \|^2_{L^2} - \| \hat{\e}^n_{\u}\|^2_{L^2} + \| \e^{n+1}_{\u} -2 \e^n_{\u} + \e^{n-1}_{\u} \|^2_{L^2} + 4\tau k^{-1} \| \nabla \e^{n+1}_{\u} \|^2_{L^2}\\
		&+\| e^{n+1}_{c}  \|^2_{L^2} - \| e^n_{c}\|^2_{L^2} + \| \hat{e} ^{n+1}_{c} \|^2_{L^2} - \| \hat{e}^n_{c}\|^2_{L^2} + \| e^{n+1}_{c}  -2 e^n_{c} + e^{n-1}_{c} \|^2_{L^2} + 4\tau \theta \| \nabla e^{n+1}_{c}  \|^2_{L^2}\\
		\leq & C\tau(\|\e^{n+1}_{\u} \|^2_{L^2}+  \|e^{n+1}_{c}  \|^2_{L^2}  )+C \tau h^4(1+ \| D_{\tau} \u^{n+1} \|^2_{H^2} +\| D_{\tau} c^{n+1} \|^2_{H^2}) \\
		&+ C\tau h^2( \| \nabla \hat{e}^n_c\|^2_{L^2}  + \|\nabla \hat{\e}^n_\u\|^2_{L^2}  ) +C \tau (\|  \hat{e}^n_c\|^2_{L^2}  + \| \hat{\e}^n_\u\|^2_{L^2})
	\end{split}
\end{align}

Taking the sum and applying the discrete Gronwall's inequality in Lemma \ref{biobdf-11}, there exists some $C_1>0$ independent of $\tau$ and $h$ such that 
\begin{align}
	\| \e^{n+1}_{\u} \|^2_{L^2} +\| e^{n+1}_{c}  \|^2_{L^2} +\tau k^{-1} \sum_{m=0}^{n}   \| \nabla \e^{n+1}_{\u} \|^2_{L^2}+ \tau \theta \sum_{m=0}^{n}  \| \nabla e^{n+1}_{c}  \|^2_{L^2} \leq C_1 (\tau^4+h^4).
\end{align}

\section{Numerical Experiments}

In this section, we will present the numerical results to verify our theoretical analysis. All programs are implemented using the free finite element software FreeFem++ \cite{Hecht2012NewDI}. For the sake of simplicity, we solve the following coupled system with the artificial functions \(g\) and \(\mathbf{f}\):

 \begin{align}
	\frac{\partial \u}{\partial t} - \div ( \nu(c) D(\u)) + \u \cdot \nabla \u + \nabla p = - g (1+\gamma c) \textbf{i}_2 + \textbf{f}, \quad  x \in \Omega, \, t>0,\\
	\nabla \cdot \u = 0, \quad  x \in \Omega, \, t>0, \\
	\frac{\partial c}{\partial t} -  \div (\theta(c) \nabla c) + \u \cdot \nabla c + U \frac{\partial c}{\partial x_2}=g, \quad  x \in \Omega, \, t>0.
\end{align}

in $\Omega \times [0, T]$, where $\Omega$ is the unit  square:
$$ \Omega = \{ (x,y)\in \mathbb{R}^2 : 0< x <1, 0<y<1   \}.$$

We set the final time $T=1$, $\theta=1$. To select the approximate functions $g$ and $\f$, we determine the exact solution  $(\sigma,\u,p)$ as follows
 \begin{align}\left\{\begin{aligned}
		&\u(x,y,t)=(yexp(-t)(2y - 1)(y - 1),  -xexp(-t)(2x - 1)(x - 1))^{T},\\
		&p(x,y,t)= exp(-t)(2x - 1)(2y - 1),\\
		& c(x,y,t)=exp(-t)sin(\pi x)sin(\pi y),
	\end{aligned}\right.\end{align}
Denote
\begin{align*}
	\|r - r_h\|_{L^2} &= \|r(t_N) - r_h^N\|_{L^2}, \\
	\|\mathbf{v} - \mathbf{v}_h\|_{L^2} &= \|\mathbf{v}(t_N) - \mathbf{v}_h^N\|_{L^2}, \\
	\|q - q_h\|_{l_2(L^2)} &= \left(\tau \sum_{n=1}^{N} \|q(t_n) - q_h^n\|_{L^2}^2\right)^{1/2}.
\end{align*}

Numerical results are showed by taking different grid size $ h = \frac{1}{4}, \frac{1}{8}, \dots, \frac{1}{128}$, the meshes are given from the uniform triangles meshes. In addition, we adapt different forms for $\nu$ by $\nu =1, 1+0.1 c, exp(c)$.  For the decoupled BDF2 finite element method, Stability results are presented in Table \ref{stability1}, \ref{stability-2}, \ref{stability3}. The convergent rate for the velocity, the concentration and pressure in $L^2$-norm are shown in Table \ref{l2rate1}, \ref{l2rate2}, \ref{l2rate3} and illustrated graphically in Figure \ref{plot1},\ref{plot2},\ref{plot3}, the convergent rate for the velocity and the concentration in $H^1$-norm are shown in Table \ref{h1rate1}, \ref{h1rate2}, \ref{h1rate3}, it is clear that the second-order convergence rate $\mathcal{O}(h^2)$ in $L^2$-norm and $\mathcal{O}(h^1)$ in $H^1$-norm. The relative error and convergent rate for the velocity, the concentration and pressure  are shown in \ref{relative1},  \ref{relative2}, \ref{relative3}. In addition, we present the numerical solutions for  the velocity, the concentration and pressure with $\nu=1+0.1c$ in Figure \ref{decoupledvelocity}, \ref{decoupledcentration}, \ref{decoupledpressure} and $\nu=exp(c)$ in Figure  \ref{coupledvelocity}, \ref{coupledcentration}, \ref{coupledpressure}. In conclusion, all numerical results and tests have well verified the effectiveness and accuracy of the proposed algorithm.

	\begin{figure}
	\centering
	\includegraphics[width=0.6\textwidth]{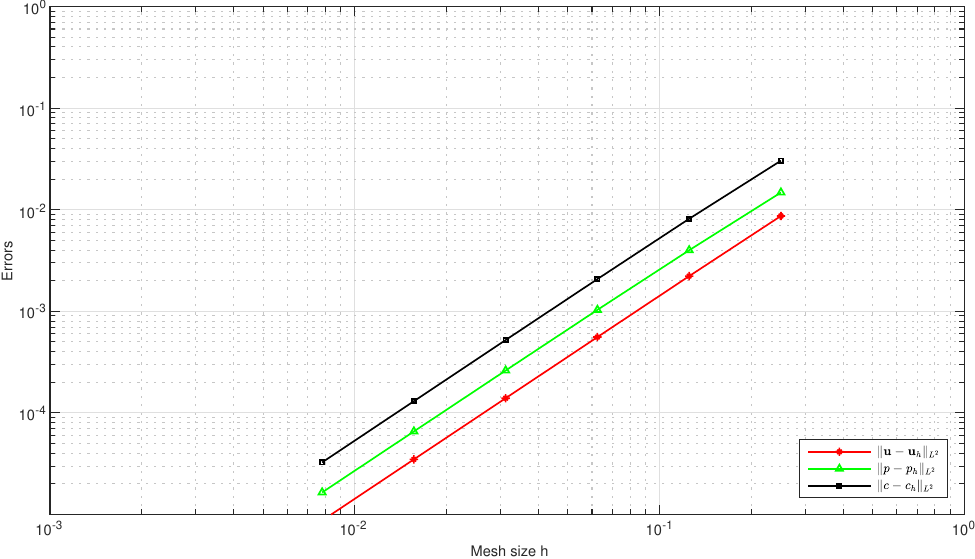}
	\caption{Convergence history of $(\u,p,c)$ for $\nu=1$.}
	\label{plot1}
\end{figure}
	\begin{figure}
	\centering
	\includegraphics[width=0.6\textwidth]{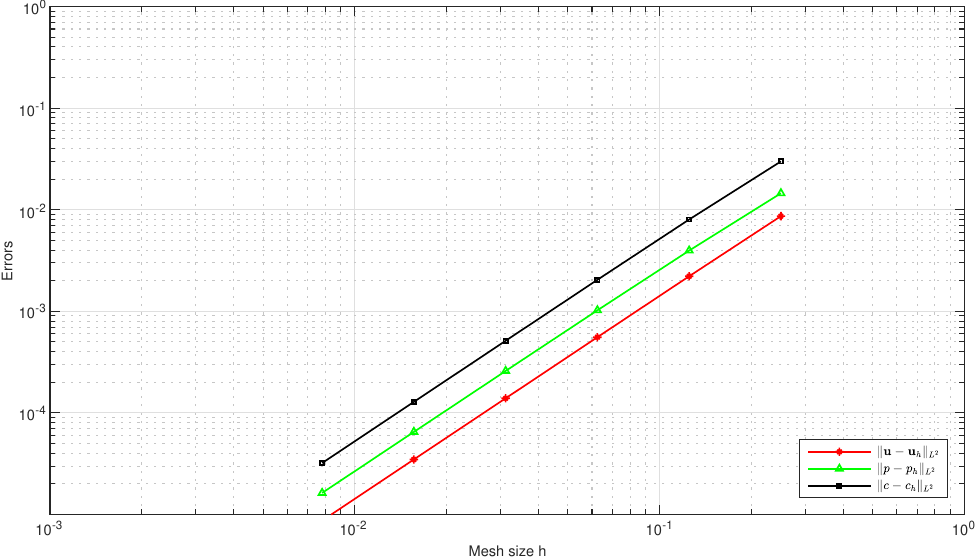}
	\caption{Convergence history of $(\rho,\sigma,\u,p)$ for $\nu=1+0.1c$.}
	\label{plot2}
\end{figure}
	\begin{figure}
	\centering
	\includegraphics[width=0.6\textwidth]{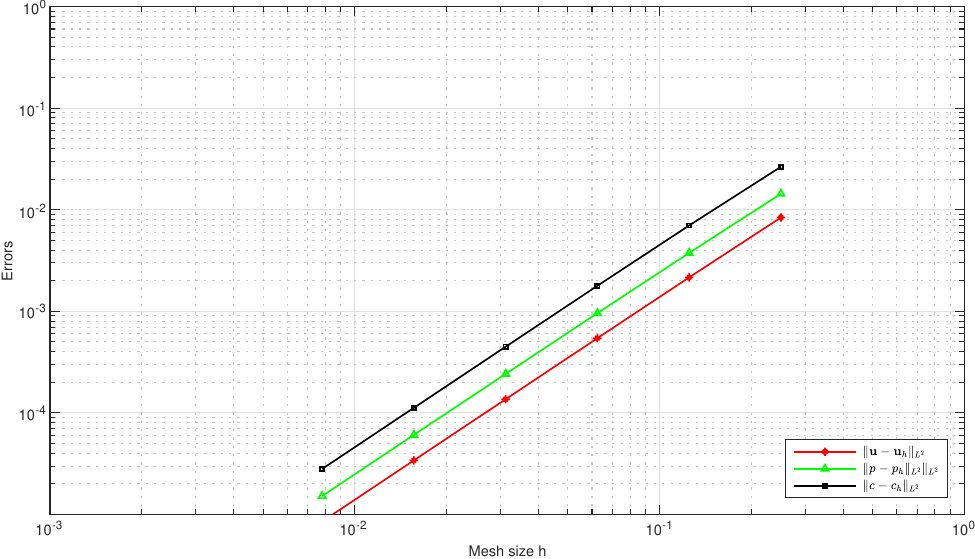}
	\caption{Convergence history of $(\rho,\sigma,\u,p)$ for $\nu=exp(c)$.}
	\label{plot3}
\end{figure}
\begin{table}[htbp]  
	\centering  
	\caption{Stability result with $\nu=1$ }  
		 \setlength{\tabcolsep}{5.8mm}{  
	\begin{tabular}{cccccc}         
		   	                \hline\hline
		 $\tau=h$& $\| \u_h\|_{L^2} $   & $\| \u_h\|_{H^1}$   & $\| c_h\|_{L^2} $  & $\| c_h\|_{H^1}$   & $\| p_h\|_{L^2} $  \\          
		 \hline 
		 1/4   & 0.0291664 & 0.0350738 & 0.156795 & 0.749873 & 0.123623  \\    
		 1/8   & 0.0342092 & 0.0177303 & 0.176622 & 0.799815 & 0.1227\\    
		 1/16  & 0.035478 & 0.0088942 & 0.182072 & 0.812822 & 0.122632 \\    
		 1/32  & 0.0357955 & 0.0044511 & 0.18347 & 0.81612 & 0.122627 \\    
		 1/64  & 0.0358749 & 0.0022261 & 0.183822 & 0.816947 & 0.122627 \\    
		 1/128 & 0.0358947 & 0.0011131 & 0.18391 & 0.817154 & 0.122626    \\  
		      	                \hline\hline
		  \end{tabular}  }
		   \label{stability1}%
		   \end{table}%
		   
		   \begin{table}[htbp] 
		   	 \centering 
		   	  \caption{Numerical errors and convergence rates of $(\u,p,c)$  in $L^2$-norm with $\nu=1$}  
		   	  \label{l2rate1}
		   	  		 \setlength{\tabcolsep}{5mm}{  
		   	    \begin{tabular}{ccccccc}         
		   	    	\hline\hline 
		   	    	$\tau=h$&  $\|\u -\u_h\|_{L^2}$ & rate  &$\| c -c_h\|_{L^2}$ & rate  & $\| p -p_h\|_{L^2}$   & rate \\   
		   	    	\hline
		   	    	  1/4   & 0.008674 &       & 0.0303567 &       & 0.0147581 &  \\   
		   	    	   1/8   & 0.0022186 & 1.97  & 0.0081386 & 1.90  & 0.0039903 & 1.89  \\   
		   	    	    1/16  & 0.0005575 & 1.99  & 0.0020749 & 1.97  & 0.0010329 & 1.95  \\ 
		   	    	       1/32  & 0.0001395 & 2.00  & 0.0005213 & 1.99  & 0.0002614 & 1.98  \\   
		   	    	        1/64  & 3.49E-05 & 2.00  & 0.0001305 & 2.00  & 6.57E-05 & 1.99  \\ 
		   	    	           1/128 & 8.72E-06 & 2.00  & 3.26E-05 & 2.00  & 1.65E-05 & 2.00  \\  
		   	    	    \hline\hline
		   	    	    \end{tabular}}
		   	    	    \end{table}%

   \begin{table}[htbp] 
    	    	 \centering 
 	    	  \caption{Numerical errors and convergence rates of $(\u,p,c)$ in $H^1$-norm with $\nu=1$} 
 	    	  \label{h1rate1}  
    	    	  	 \setlength{\tabcolsep}{9mm}{  
 	    	  \begin{tabular}{ccccc}          
  	    	    	  	   	    	\hline\hline 
  	    	    	  $\tau=h$	& $\|\u -\u_h\|_{H^1}$ & rate  & $\|c -c_h\|_{H^1}$& rate \\    
  	    	    	  	   	    	\hline
  	    	    	  1/4   & 0.116999 &       & 0.308563 &  \\    1/8   & 0.0595905 & 0.97  & 0.158861 & 0.96  \\    1/16  & 0.0299324 & 0.99  & 0.0800286 & 0.99  \\    1/32  & 0.0149835 & 1.00  & 0.04009 & 1.00  \\    1/64  & 0.0074939 & 1.00  & 0.0200545 & 1.00  \\    1/128 & 0.0037472 & 1.00  & 0.0100284 & 1.00  \\   
  	    	    	  	    	    	\hline\hline 
   	  	   \end{tabular}}
    	    	  	 \end{table}%

\begin{table}[htbp]  
	\centering  
	\caption{Relative errors and convergence rates of $(\u,p,c)$ with $\nu=1$}   
	\label{relative1}
	 \setlength{\tabcolsep}{5.3mm}{  
	 \begin{tabular}{ccccccc}    
	 		\hline\hline       
	 $\tau=h$	& $\frac{\| \u-\u_h\|_{L^2}}{\| \u\|_{L^2}}$ & rate  &$\frac{\| c-c_h\|_{L^2}}{\| c \|_{L^2}}$   & rate  &$\frac{\| p-p_h\|_{L^2}}{\| p\|_{L^2}}$   & rate \\   
	 		\hline 
	 	1/4   & 0.297397 &       & 0.193607 &       & 0.11938 &  \\    
	 	1/8   & 0.0648539 & 2.20  & 0.0460789 & 2.07  & 0.0325203 & 1.88  \\    
	 	1/16  & 0.0157137 & 2.05  & 0.0113959 & 2.02  & 0.0084228 & 1.95  \\   
	 	1/32  & 0.0038978 & 2.01  & 0.0028411 & 2.00  & 0.0021313 & 1.98  \\    
	 	1/64  & 0.0009725 & 2.00  & 0.0007098 & 2.00  & 0.0005357 & 1.99  \\    
	 	1/128 & 0.000243 & 2.00  & 0.0001774 & 2.00  & 0.0001343 & 2.00  \\   
	 		\hline\hline  
	 		\end{tabular}}
	 	\end{table}%

	 	\begin{table}[htbp]
	 		\centering  
	 				 \setlength{\tabcolsep}{5.8mm}{  
	 		\caption{Stability result with $\nu=1+0.1c$.}    
	 		 \label{stability-2}
	 		\begin{tabular}{cccccc}      
	 				\hline\hline           
	 			 $\tau=h$& $\| \u_h\|_{L^2} $   & $\| \u_h\|_{H^1}$   & $\| c_h\|_{L^2} $  & $\| c_h\|_{H^1}$   & $\| p_h\|_{L^2} $  \\          
	 				\hline 
	 			1/4   & 0.02921 & 0.0350731 & 0.157375 & 0.75264 & 0.123597 \\    
	 			1/8   & 0.0342185 & 0.0177302 & 0.176771 & 0.800485 & 0.1227 \\    
	 			1/16  & 0.0354801 & 0.0088942 & 0.182111 & 0.812995 & 0.122632 \\    1/32  & 0.035796 & 0.0044511 & 0.18348 & 0.816163 & 0.122627 \\    
	 			1/64  & 0.035875 & 0.0022261 & 0.183825 & 0.816958 & 0.122627 \\    1/128 & 0.0358948 & 0.0011131 & 0.183911 & 0.817156 & 0.122626 \\   
	 				\hline\hline  
	 				 \end{tabular}}
	 			\end{table}%

	 			\begin{table}[htbp] 
	 				\centering 
	 				\caption{Numerical errors and convergence rates of $(\u,p,c)$  in $L^2$-norm with $\nu=1+0.1c$.}  
	 					   	  \label{l2rate2}
	 				\setlength{\tabcolsep}{5mm}{  
	 					\begin{tabular}{ccccccc}         
	 						\hline\hline 
	 						$\tau=h$&  $\|\u -\u_h\|_{L^2}$ & rate  &$\| c -c_h\|_{L^2}$ & rate  & $\| p -p_h\|_{L^2}$   & rate \\   
	 						\hline
	 					  1/4   & 0.0086415 &       & 0.0298615 &       & 0.0145104 &  \\    
	 					  1/8   & 0.0022125 & 1.97  & 0.0080102 & 1.90  & 0.0039585 & 1.87  \\  
	 					    1/16  & 0.0005562 & 1.99  & 0.0020413 & 1.97  & 0.0010226 & 1.95  \\  
	 					      1/32  & 0.0001392 & 2.00  & 0.0005128 & 1.99  & 0.0002587 & 1.98  \\   
	 					       1/64  & 3.48E-05 & 2.00  & 0.0001284 & 2.00  & 6.50E-05 & 1.99  \\   
	 					        1/128 & 8.70E-06 & 2.00  & 3.21E-05 & 2.00  & 1.63E-05 & 2.00  \\  
	 						\hline\hline
	 				\end{tabular}}
	 			\end{table}%

\begin{table}[htbp] 
	\centering 
	\caption{Numerical errors and convergence rates of $(\u,p,c)$ in $H^1$-norm with $\nu=1+0.1c$.}   
	 \label{h1rate2}  
	\setlength{\tabcolsep}{9mm}{  
		\begin{tabular}{ccccc}          
			\hline\hline 
			$\tau=h$	& $\|\u -\u_h\|_{H^1}$ & rate  & $\|c -c_h\|_{H^1}$& rate \\    
			\hline
		1/4   & 0.116986 &       & 0.308516 &  \\    1/8   & 0.0595896 & 0.97  & 0.158855 & 0.96  \\    1/16  & 0.0299323 & 0.99  & 0.0800279 & 0.99  \\    1/32  & 0.0149835 & 1.00  & 0.0400899 & 1.00  \\    1/64  & 0.0074939 & 1.00  & 0.0200545 & 1.00  \\    1/128 & 0.0037472 & 1.00  & 0.0100284 & 1.00  \\  
			\hline\hline 
	\end{tabular}}
\end{table}%

\begin{table}[htbp]  
	\centering  
	\caption{Relative errors and convergence rates of $(\u,p,c)$ with $\nu=1+0.1c$.}   
		\label{relative2}
	\setlength{\tabcolsep}{5.3mm}{  
		\begin{tabular}{ccccccc}    
			\hline\hline       
			$\tau=h$	& $\frac{\| \u-\u_h\|_{L^2}}{\| \u\|_{L^2}}$ & rate  &$\frac{\| c-c_h\|_{L^2}}{\| c \|_{L^2}}$   & rate  &$\frac{\| p-p_h\|_{L^2}}{\| p\|_{L^2}}$   & rate \\   
			\hline 
		  1/4   & 0.295839 &       & 0.189747 &       & 0.117401 &  \\    1/8   & 0.0646581 & 2.19  & 0.045314 & 2.07  & 0.0322619 & 1.86  \\    1/16  & 0.0156756 & 2.04  & 0.011209 & 2.02  & 0.0083389 & 1.95  \\    1/32  & 0.0038891 & 2.01  & 0.0027949 & 2.00  & 0.0021095 & 1.98  \\    1/64  & 0.0009704 & 2.00  & 0.0006983 & 2.00  & 0.0005302 & 1.99  \\    1/128 & 0.0002425 & 2.00  & 0.0001745 & 2.00  & 0.0001329 & 2.00  \\  
			\hline\hline  
	\end{tabular}}
\end{table}%

	\begin{table}[htbp]  
	\centering  
	\setlength{\tabcolsep}{5.8mm}{  
		\caption{Stability result with $\nu = exp(c)$.}    
			 \label{stability3}%
		\begin{tabular}{cccccc}      
			\hline\hline           
			$\tau=h$& $\| \u_h\|_{L^2} $   & $\| \u_h\|_{H^1}$   & $\| c_h\|_{L^2} $  & $\| c_h\|_{H^1}$   & $\| p_h\|_{L^2} $  \\          
			\hline 
		 1/4   & 0.0296026 & 0.0352432 & 0.161541 & 0.772549 & 0.123604 \\    1/8   & 0.0343059 & 0.0177448 & 0.177988 & 0.805997 & 0.122697 \\    1/16  & 0.0355003 & 0.0088958 & 0.182422 & 0.814382 & 0.122632 \\    1/32  & 0.0358009 & 0.0044513 & 0.183558 & 0.816509 & 0.122627 \\    1/64  & 0.0358762 & 0.0022261 & 0.183844 & 0.817044 & 0.122627 \\    1/128 & 0.0358951 & 0.0011131 & 0.183916 & 0.817178 & 0.122626 \\ 
			\hline\hline  
	\end{tabular}}
\end{table}%

\begin{table}[htbp] 
	\centering 
	\caption{Numerical errors and convergence rates of $(\u,p,c)$  in $L^2$-norm with $\nu =  exp(c)$.}  
		   	  \label{l2rate3}
	\setlength{\tabcolsep}{5mm}{  
		\begin{tabular}{ccccccc}         
			\hline\hline 
			$\tau=h$&  $\|\u -\u_h\|_{L^2}$ & rate  &$\| c -c_h\|_{L^2}$ & rate  & $\| p -p_h\|_{L^2}$   & rate \\   
			\hline
		1/4   & 0.0083743 &       & 0.0264466 &       & 0.0143339 &  \\    
		1/8   & 0.0021562 & 1.96  & 0.0069973 & 1.92  & 0.0037548 & 1.93  \\   
		 1/16  & 0.0005439 & 1.99  & 0.0017822 & 1.97  & 0.0009591 & 1.97  \\    
		 1/32  & 0.0001363 & 2.00  & 0.000448 & 1.99  & 0.000242 & 1.99  \\   
		  1/64  & 3.41E-05 & 2.00  & 0.0001122 & 2.00  & 6.08E-05 & 1.99  \\  
		    1/128 & 8.53E-06 & 2.00  & 2.81E-05 & 2.00  & 1.52E-05 & 2.00  \\ 
			\hline\hline
	\end{tabular}}
\end{table}%

\begin{table}[htbp] 
	\centering 
	\caption{Numerical errors and convergence rates of $(\u,p,c)$ in $H^1$-norm with $\nu = exp(c)$.}   
	 \label{h1rate3}  
	\setlength{\tabcolsep}{9mm}{  
		\begin{tabular}{ccccc}          
			\hline\hline 
			$\tau=h$	& $\|\u -\u_h\|_{H^1}$ & rate  & $\|c -c_h\|_{H^1}$& rate \\    
			\hline
		1/4   & 0.117002 &       & 0.309078 &  \\    1/8   & 0.059592 & 0.97  & 0.158943 & 0.96  \\    1/16  & 0.0299327 & 0.99  & 0.0800392 & 0.99  \\    1/32  & 0.0149835 & 1.00  & 0.0400913 & 1.00  \\    1/64  & 0.0074939 & 1.00  & 0.0200547 & 1.00  \\    1/128 & 0.0037472 & 1.00  & 0.0100285 & 1.00  \\ 
			\hline\hline 
	\end{tabular}}
\end{table}%

\begin{table}[htbp]  
	\centering  
	\caption{Relative errors and convergence rates of $(\u,p,c)$ with $\nu = exp(c)$.}   
		\label{relative3}
	\setlength{\tabcolsep}{5.3mm}{  
		\begin{tabular}{ccccccc}    
			\hline\hline       
			$\tau=h$	& $\frac{\| \u-\u_h\|_{L^2}}{\| \u\|_{L^2}}$ & rate  &$\frac{\| c-c_h\|_{L^2}}{\| c \|_{L^2}}$   & rate  &$\frac{\| p-p_h\|_{L^2}}{\| p\|_{L^2}}$   & rate \\   
			\hline 
		1/4   & 0.282891 &       & 0.163715 &       & 0.115967 &  \\    1/8   & 0.0628515 & 2.17  & 0.039313 & 2.06  & 0.0306021 & 1.92  \\    1/16  & 0.0153206 & 2.04  & 0.0097695 & 2.01  & 0.0078207 & 1.97  \\    1/32  & 0.0038077 & 2.01  & 0.0024406 & 2.00  & 0.0019735 & 1.99  \\    1/64  & 0.0009507 & 2.00  & 0.0006102 & 2.00  & 0.0004958 & 1.99  \\    1/128 & 0.0002376 & 2.00  & 0.0001526 & 2.00  & 0.0001243 & 2.00  \\  
			\hline\hline  
	\end{tabular}}
\end{table}%

\begin{figure}[htbp] 
	
	\centering 
		\begin{minipage}{0.32\textwidth} 
			\centering  
			\includegraphics[width=\textwidth]{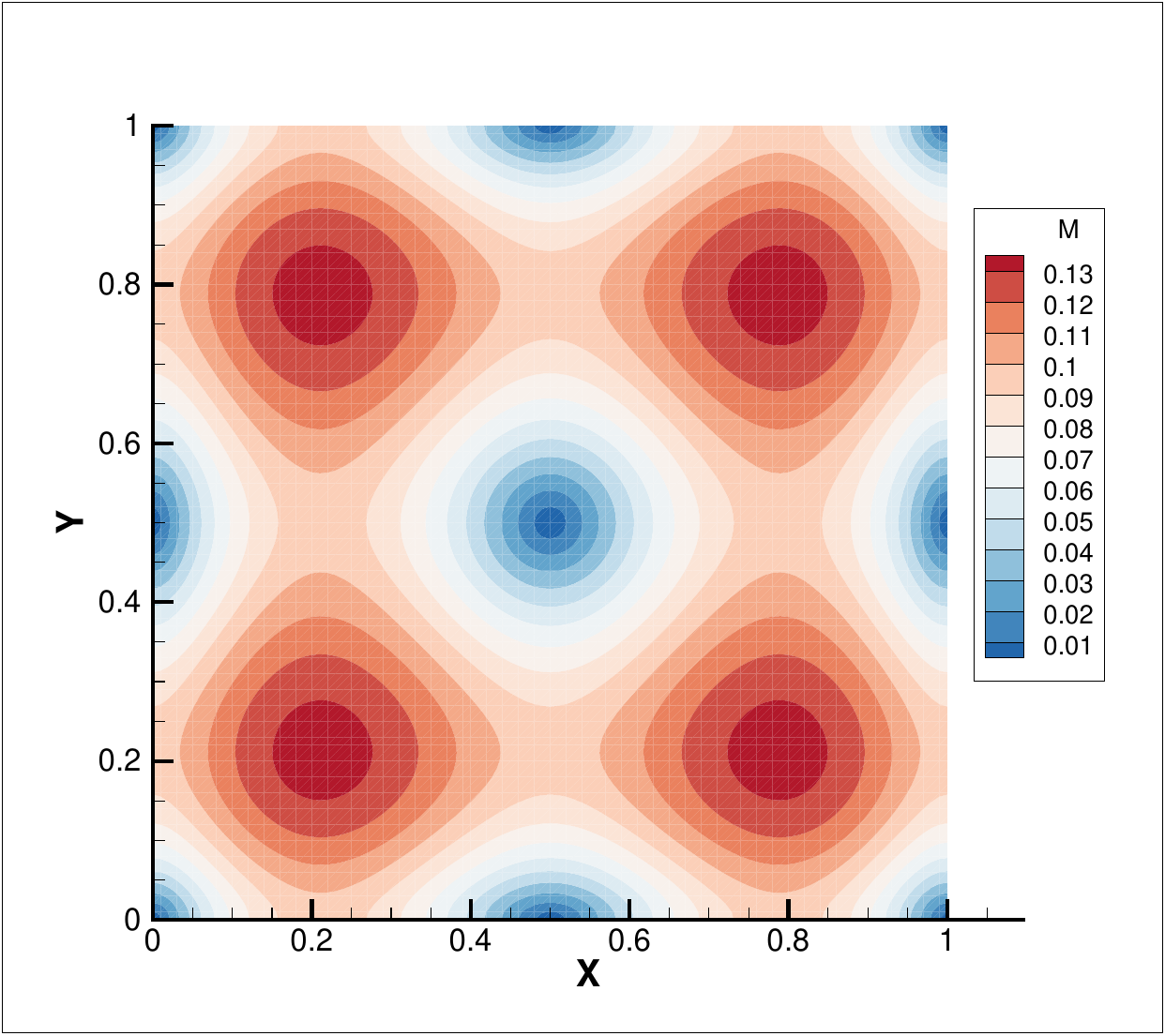}  
		\end{minipage}  
		\begin{minipage}{0.32\textwidth} 
			\centering  
			\includegraphics[width=\textwidth]{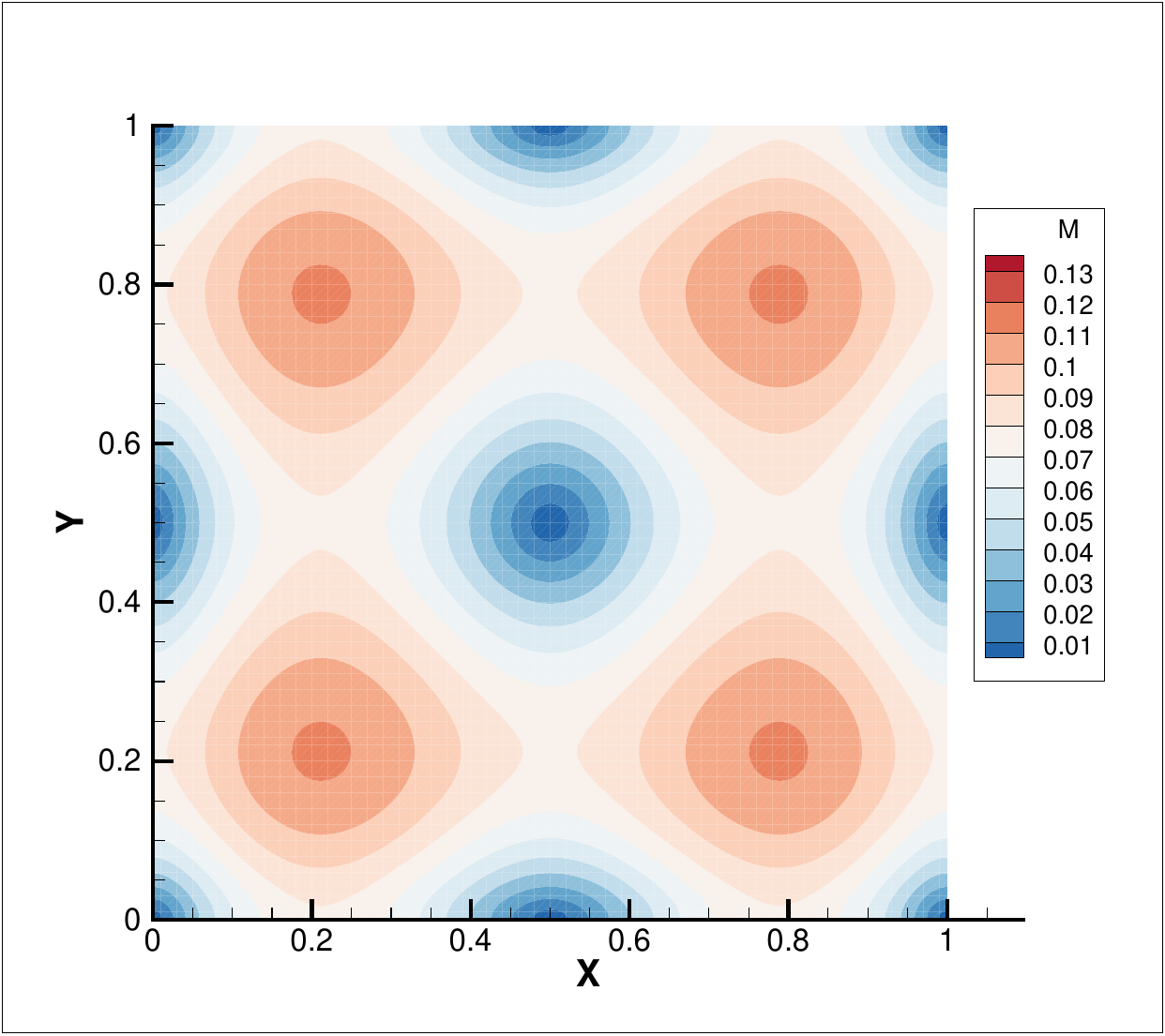} 
		\end{minipage}  
		\begin{minipage}{0.32\textwidth}  
			\centering  
			\includegraphics[width=\textwidth]{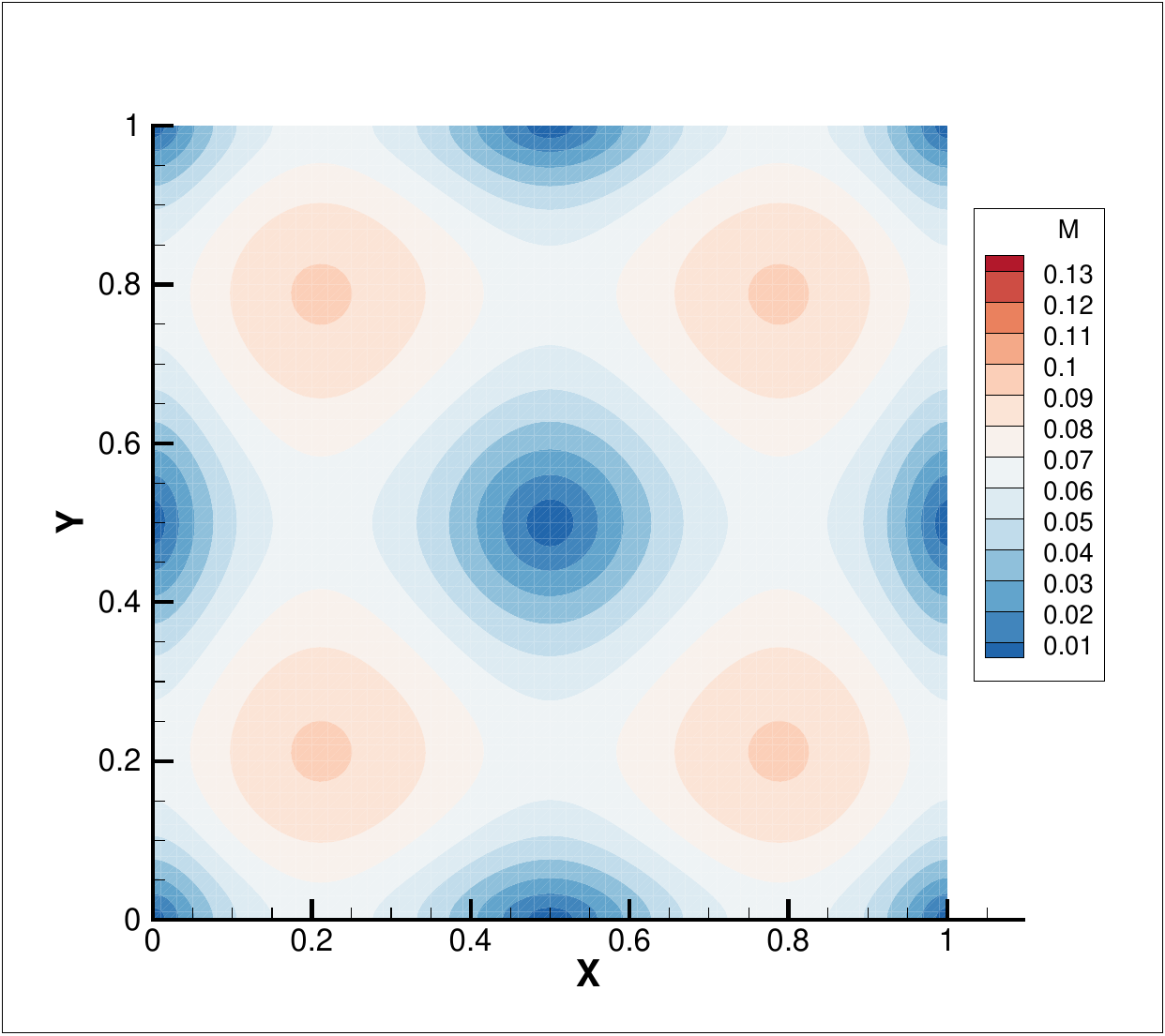}  
		\end{minipage}  
		\begin{minipage}{0.32\textwidth} 
			\centering  
			\includegraphics[width=\textwidth]{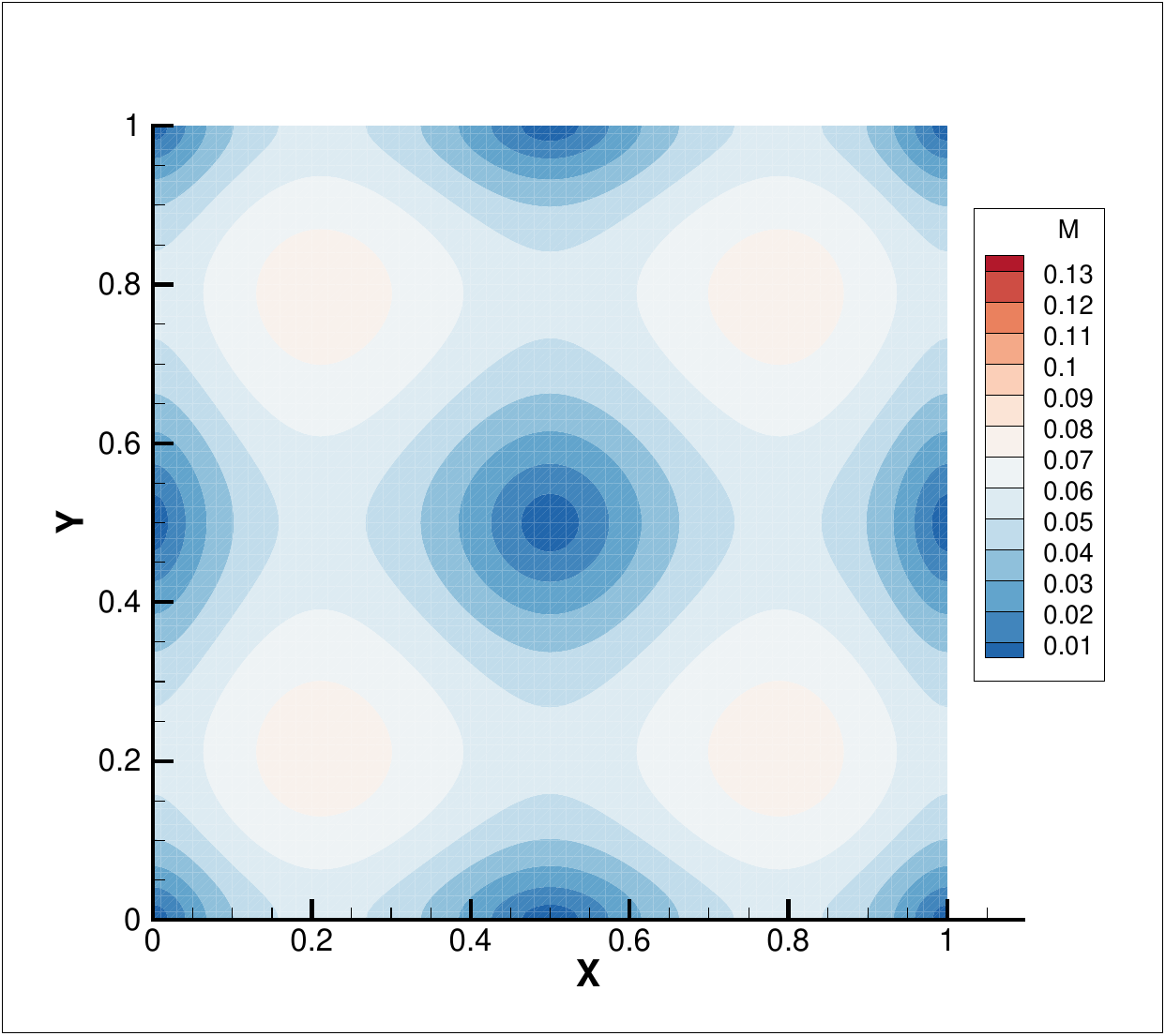}  
		\end{minipage}  
		\begin{minipage}{0.32\textwidth} 
			\centering  
			\includegraphics[width=\textwidth]{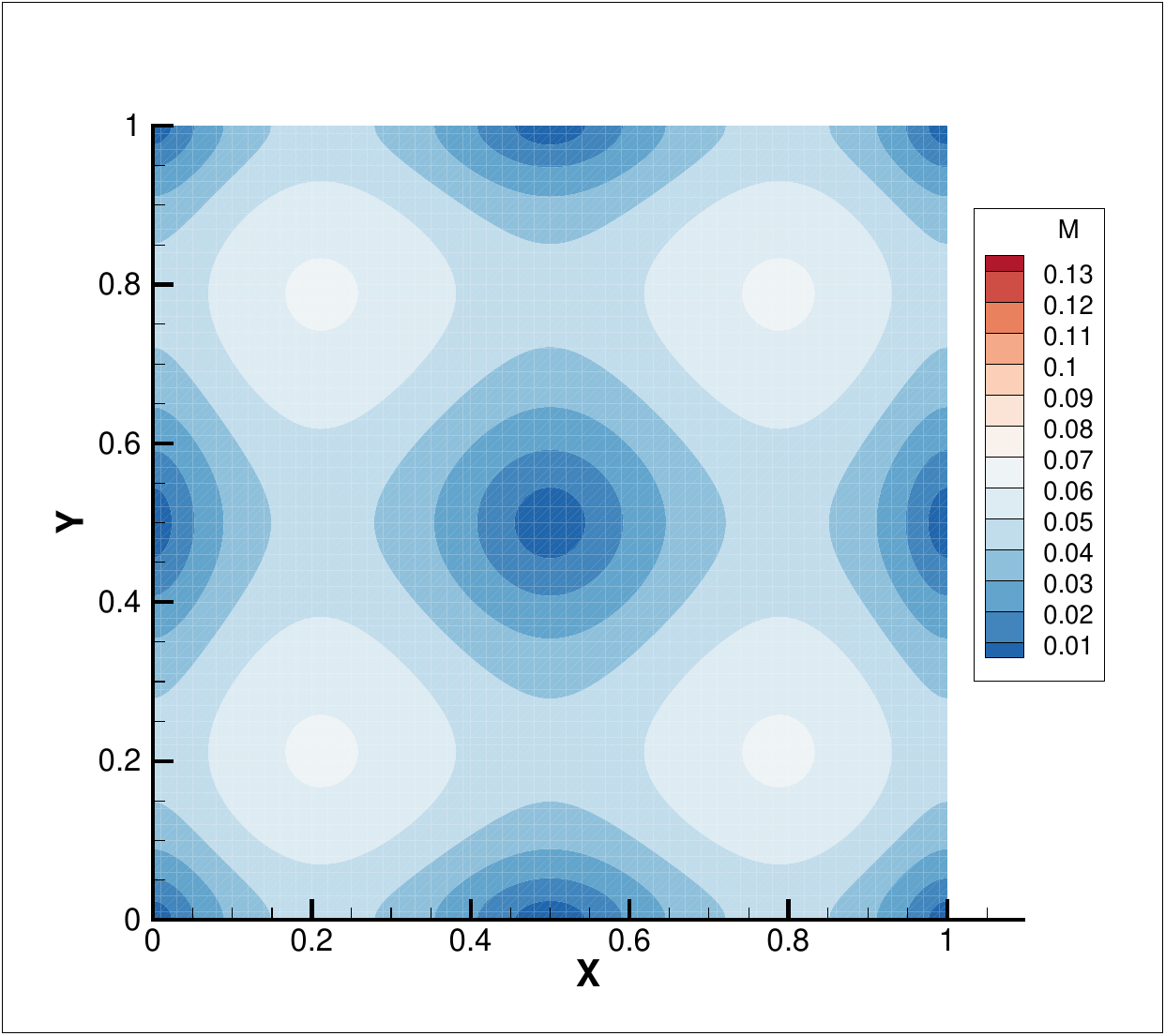} 
		\end{minipage}  
		\begin{minipage}{0.32\textwidth}  
			\centering  
			\includegraphics[width=\textwidth]{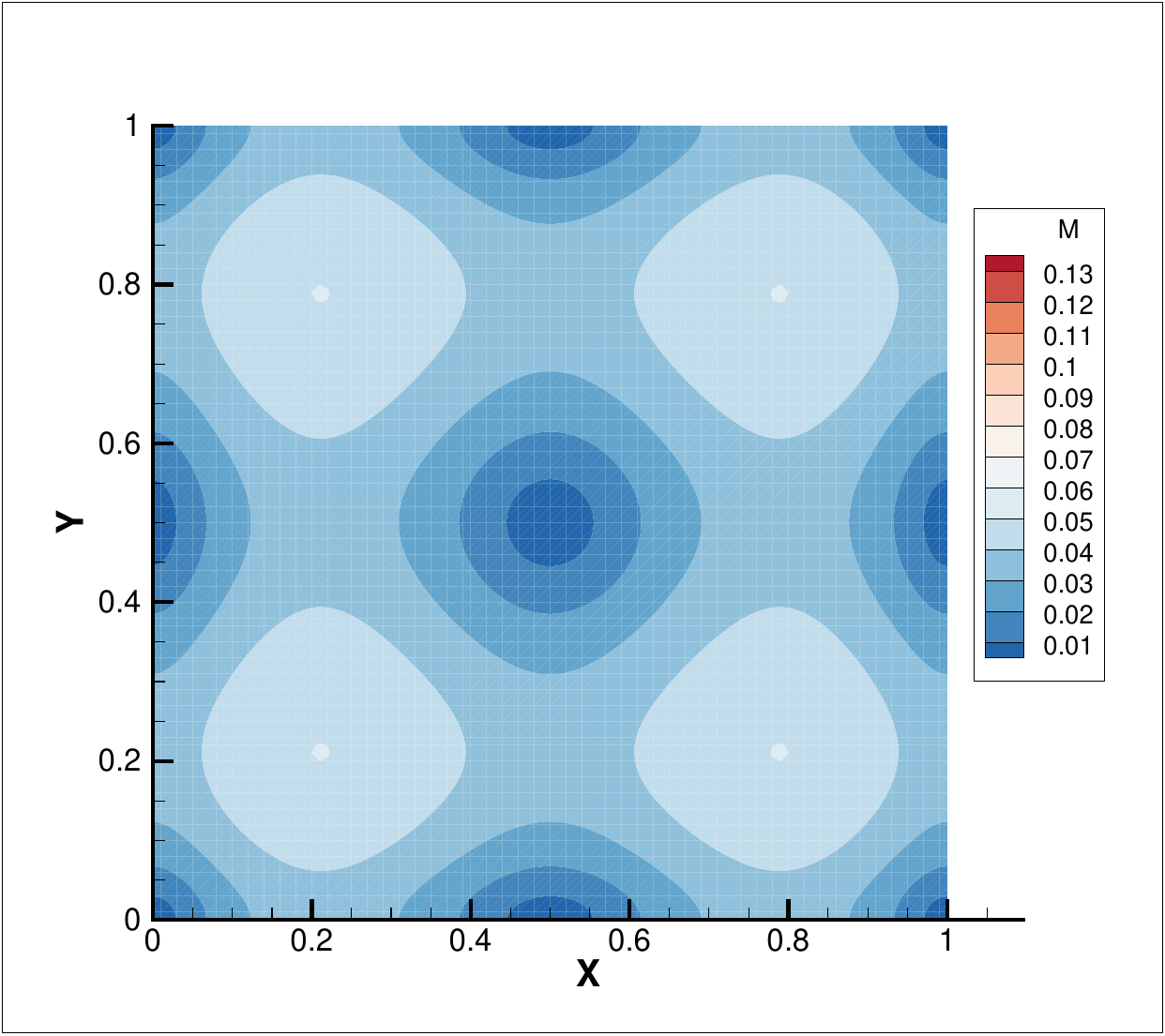}  
		\end{minipage}  
	\caption{Numerical solutions of velocity at times t = 0, 0.2, 0.4, 0.6, 0.8, 1.0 with $\nu=1+0.1c$ for the decoupled finite element method.}  
	\label{decoupledvelocity}  
\end{figure}

\begin{figure}[htbp] 
	
	\centering 

		\begin{minipage}{0.32\textwidth} 
			\centering  
			\includegraphics[width=\textwidth]{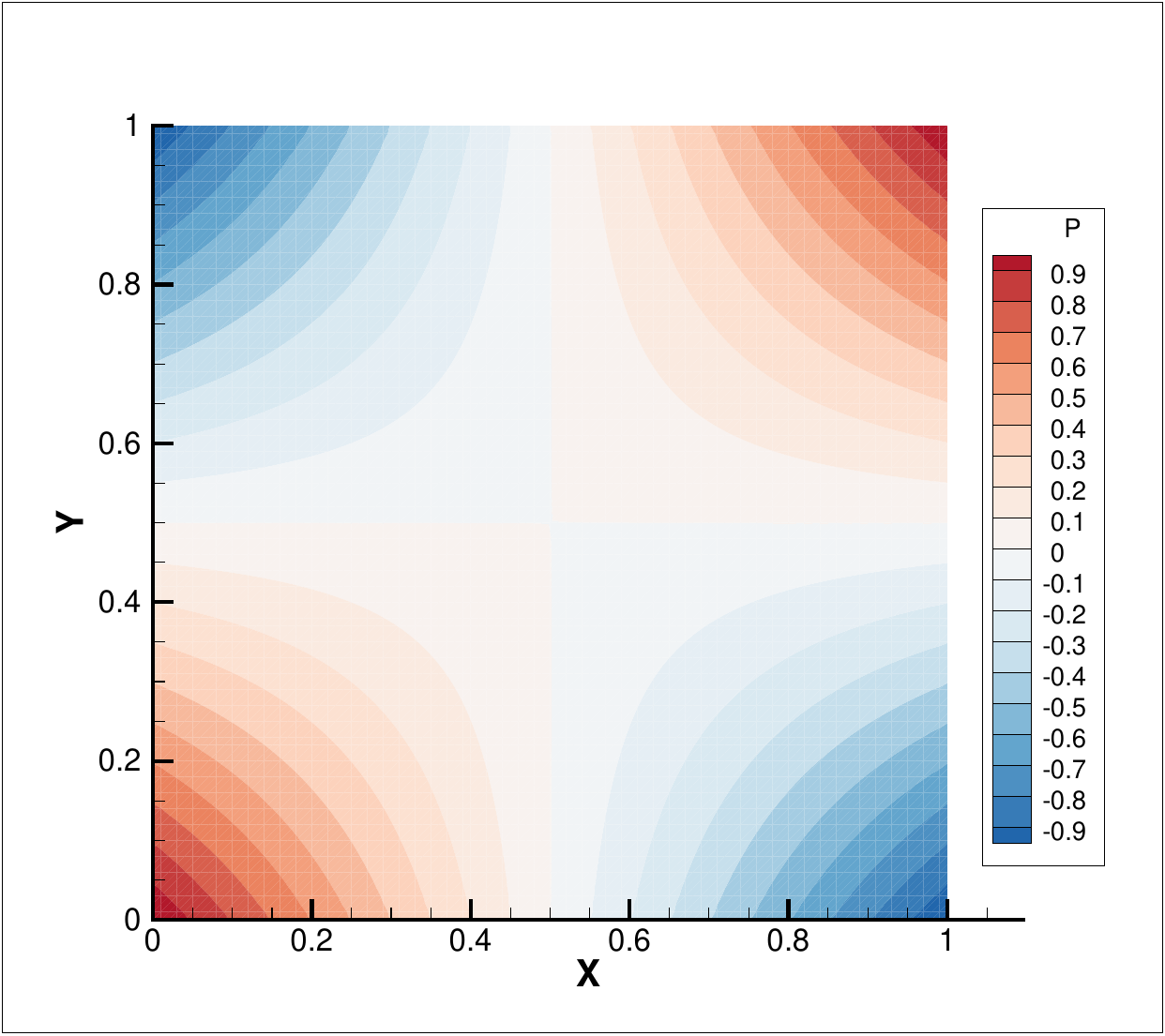}  
		\end{minipage}  
		\begin{minipage}{0.32\textwidth} 
			\centering  
			\includegraphics[width=\textwidth]{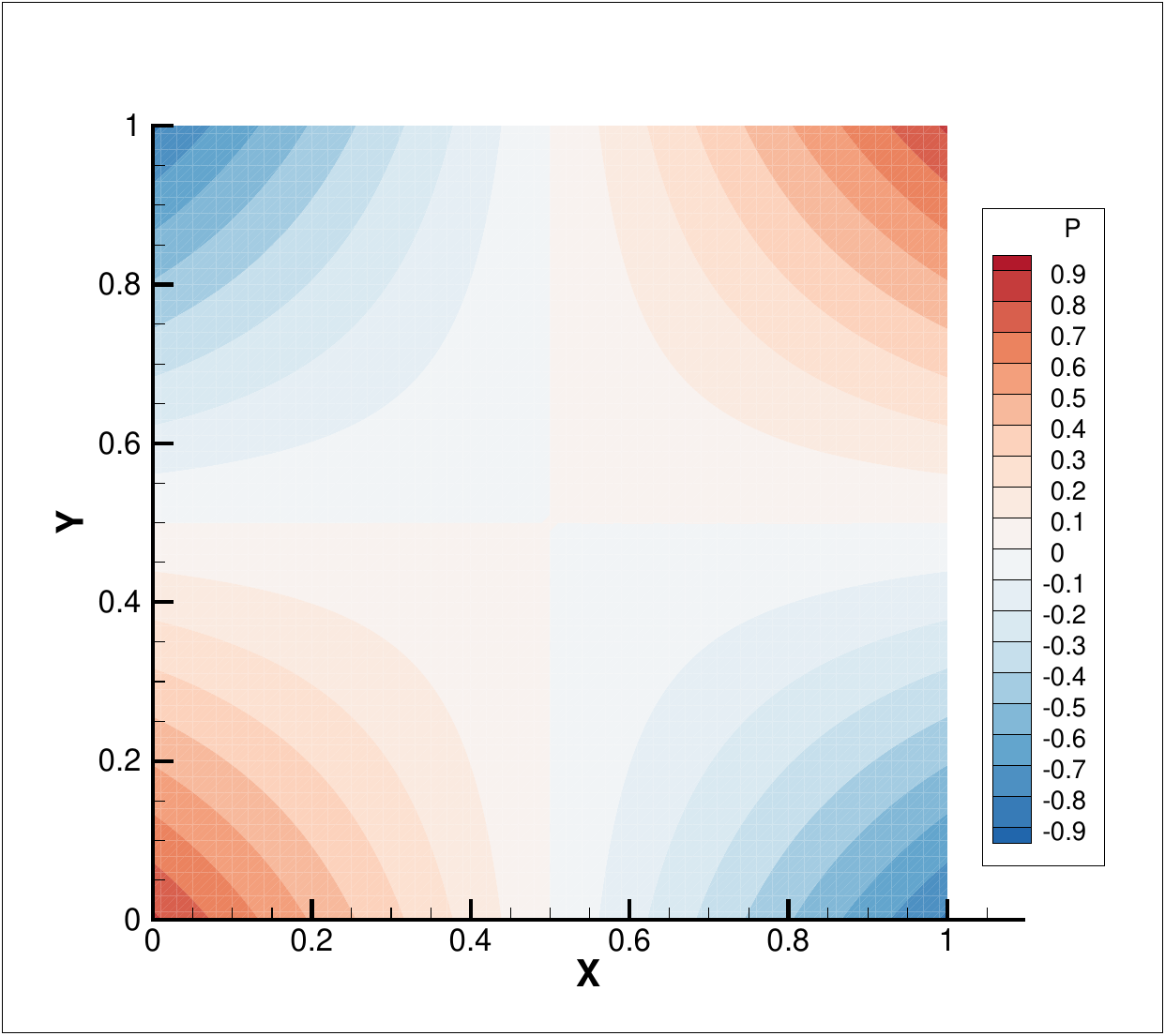} 
		\end{minipage}  
		\begin{minipage}{0.32\textwidth}  
			\centering  
			\includegraphics[width=\textwidth]{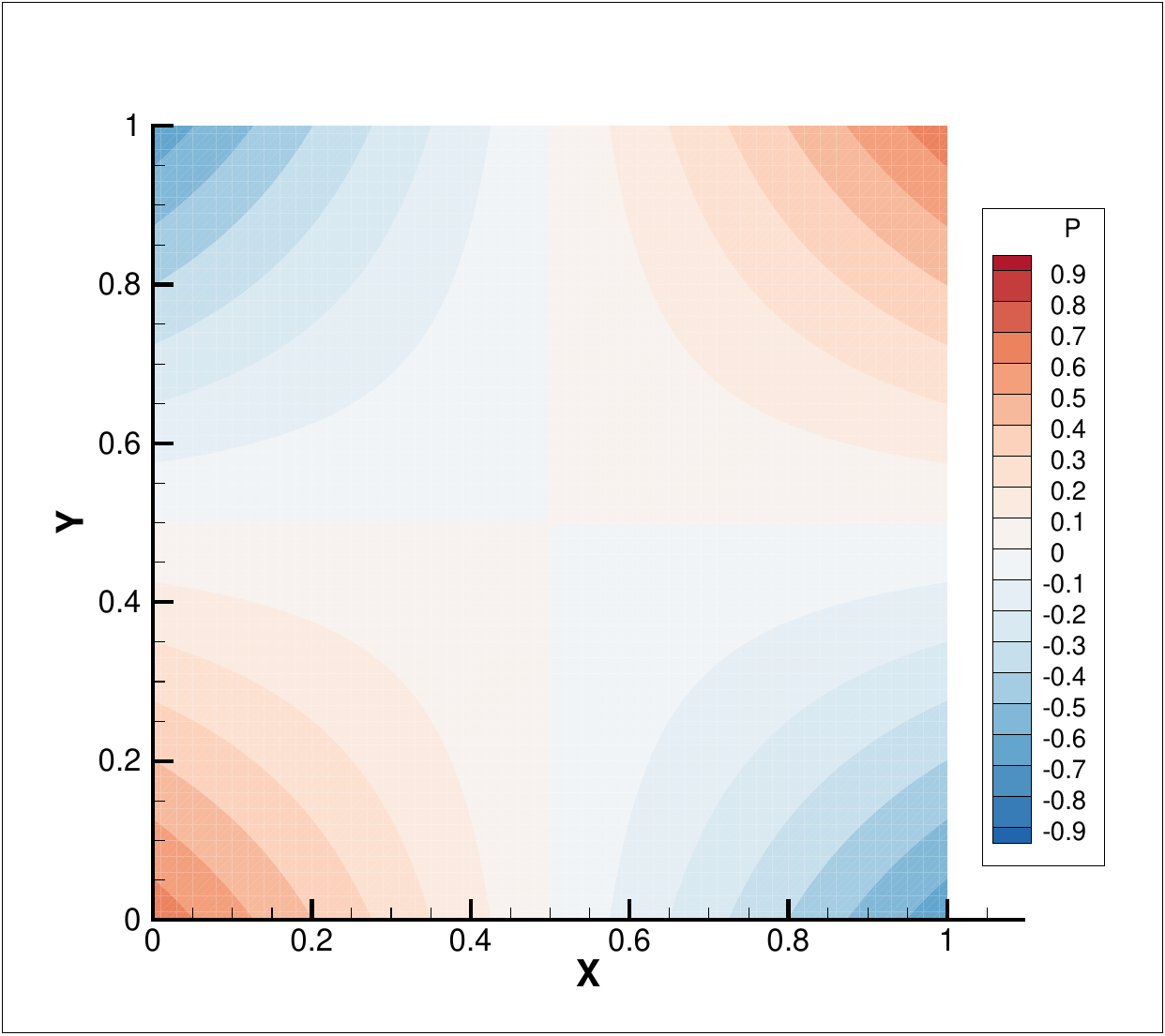}  
		\end{minipage}
		
		\begin{minipage}{0.32\textwidth} 
			\centering  
			\includegraphics[width=\textwidth]{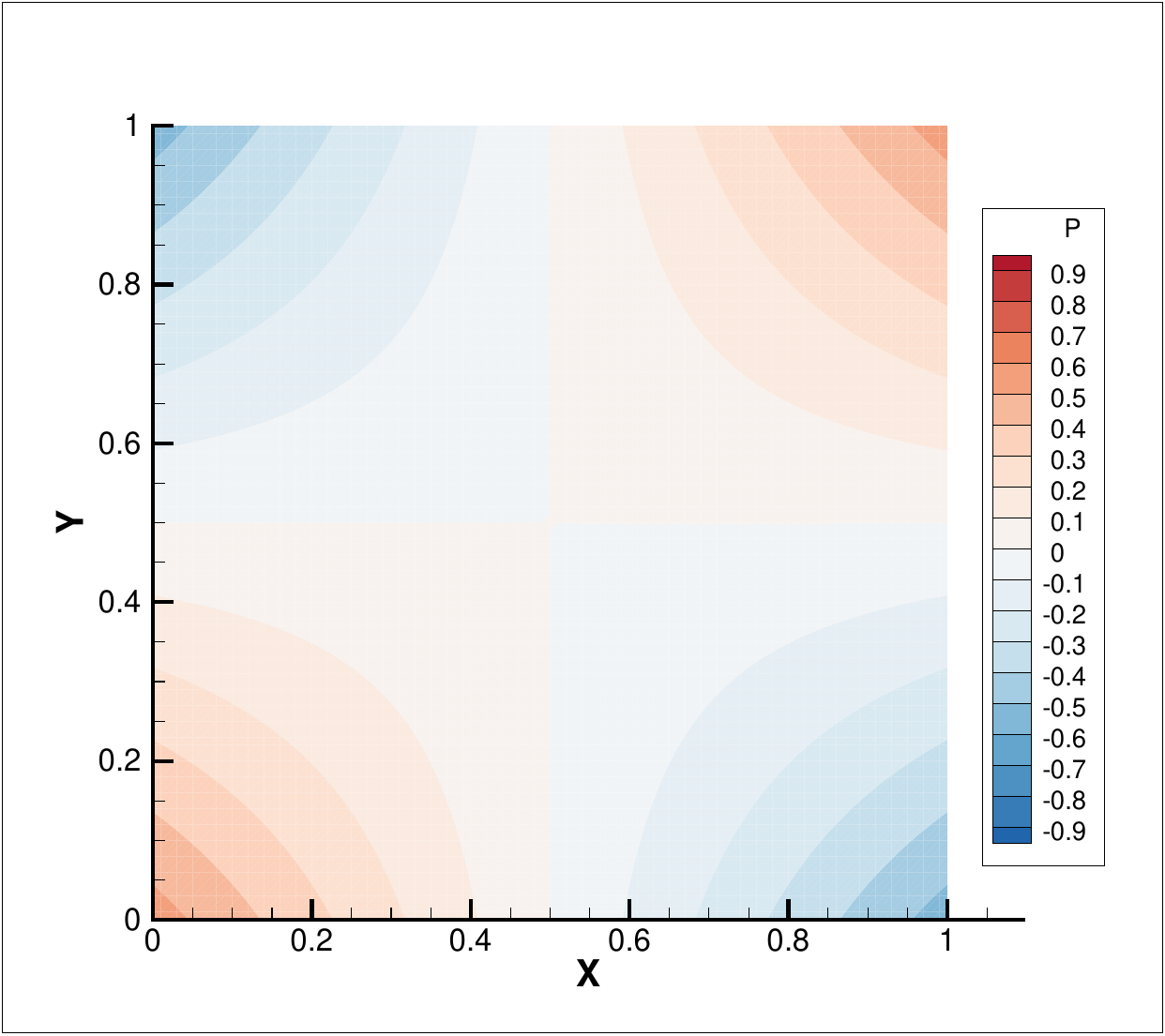}  
		\end{minipage}  
		\begin{minipage}{0.32\textwidth} 
			\centering  
			\includegraphics[width=\textwidth]{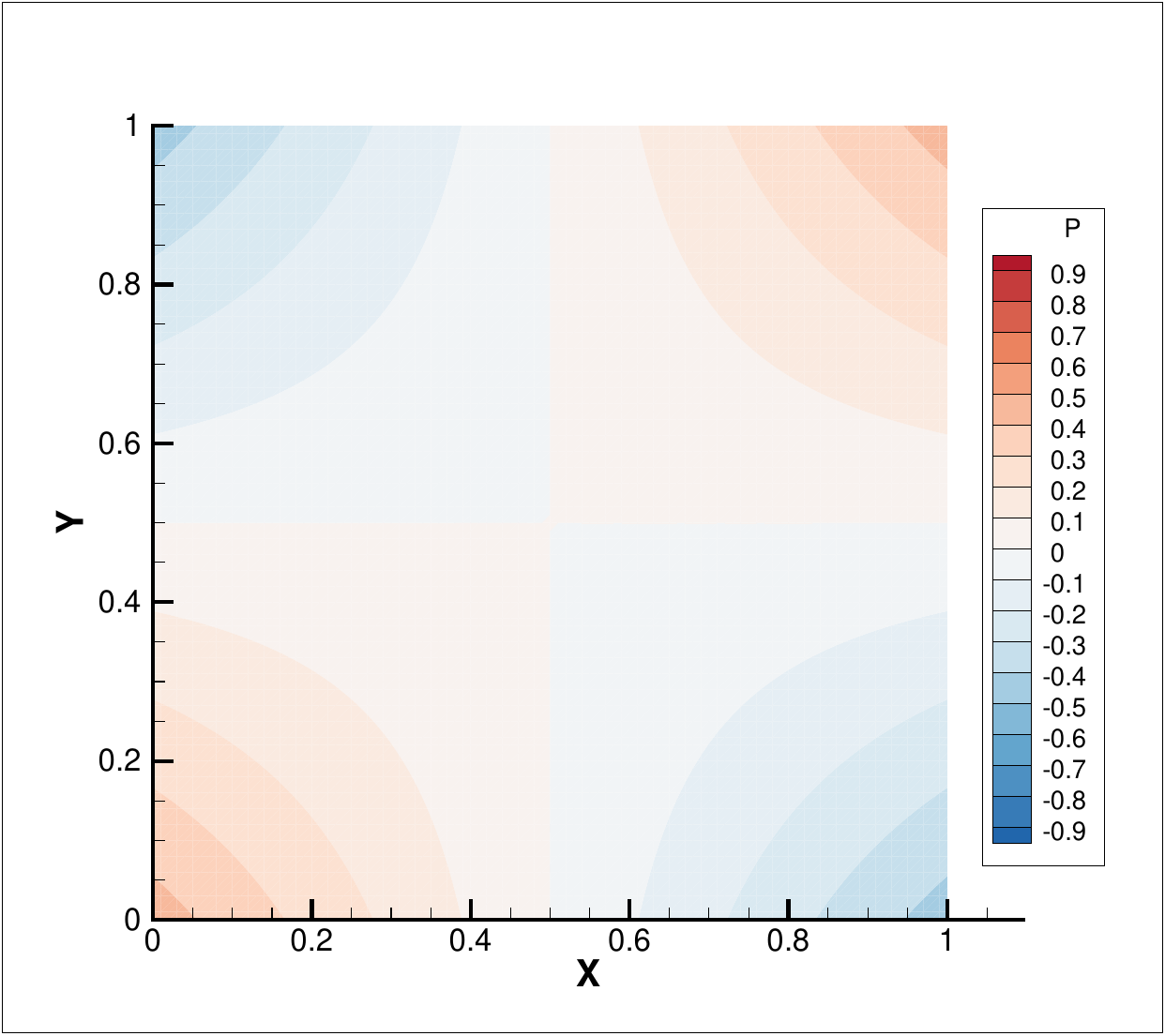} 
		\end{minipage}  
		\begin{minipage}{0.32\textwidth}  
			\centering  
			\includegraphics[width=\textwidth]{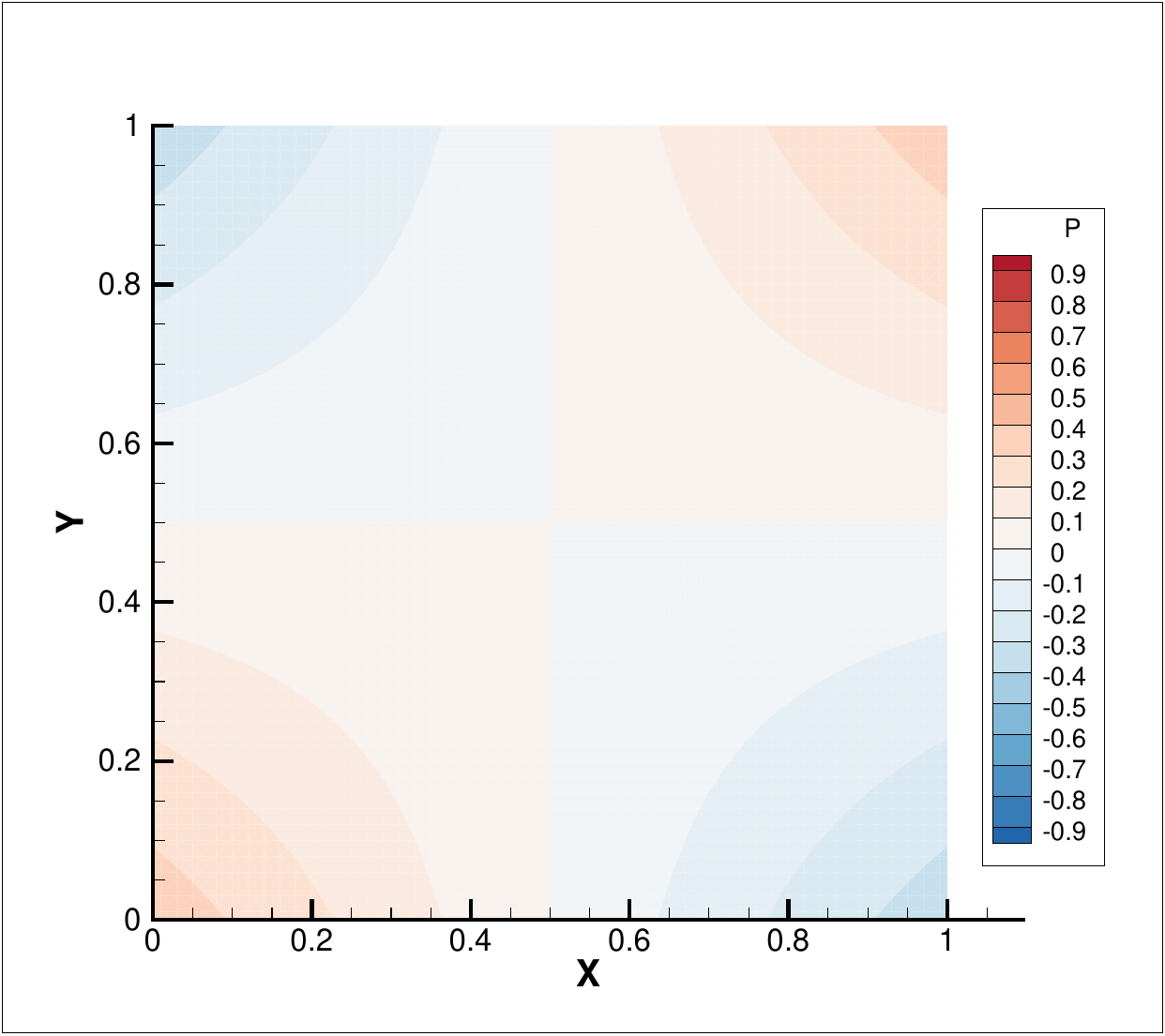}  
		\end{minipage}   
	\caption{Numerical solutions of pressure at times t = 0, 0.2, 0.4, 0.6, 0.8, 1.0 with $\nu=1+0.1c$ for the decoupled finite element method.}  
	\label{decoupledpressure}  
\end{figure}

	\begin{figure}[htbp] 
		
		\centering 
	
			\begin{minipage}{0.32\textwidth} 
				\centering  
				\includegraphics[width=\textwidth]{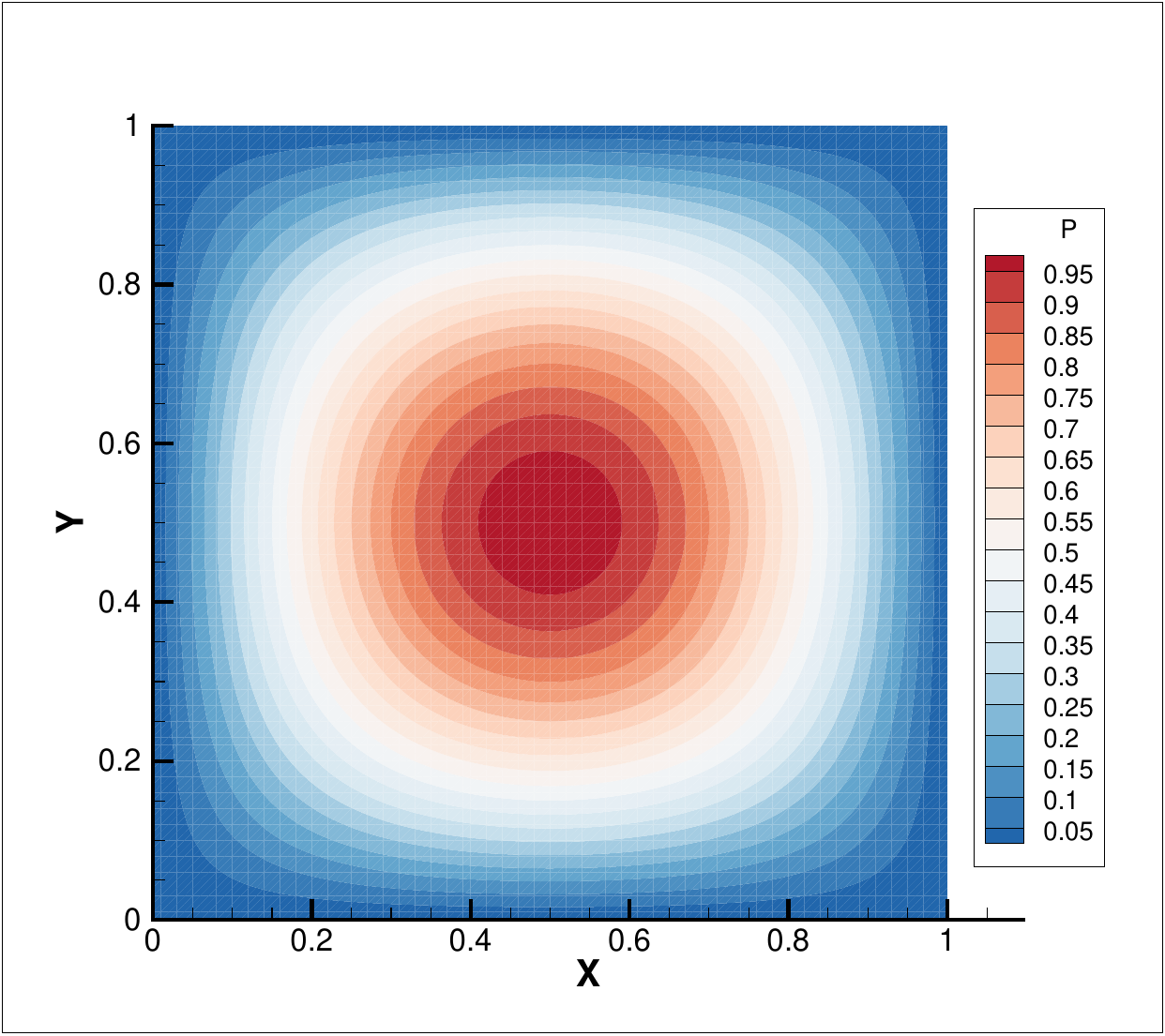}  
			\end{minipage}  
			\begin{minipage}{0.32\textwidth} 
				\centering  
				\includegraphics[width=\textwidth]{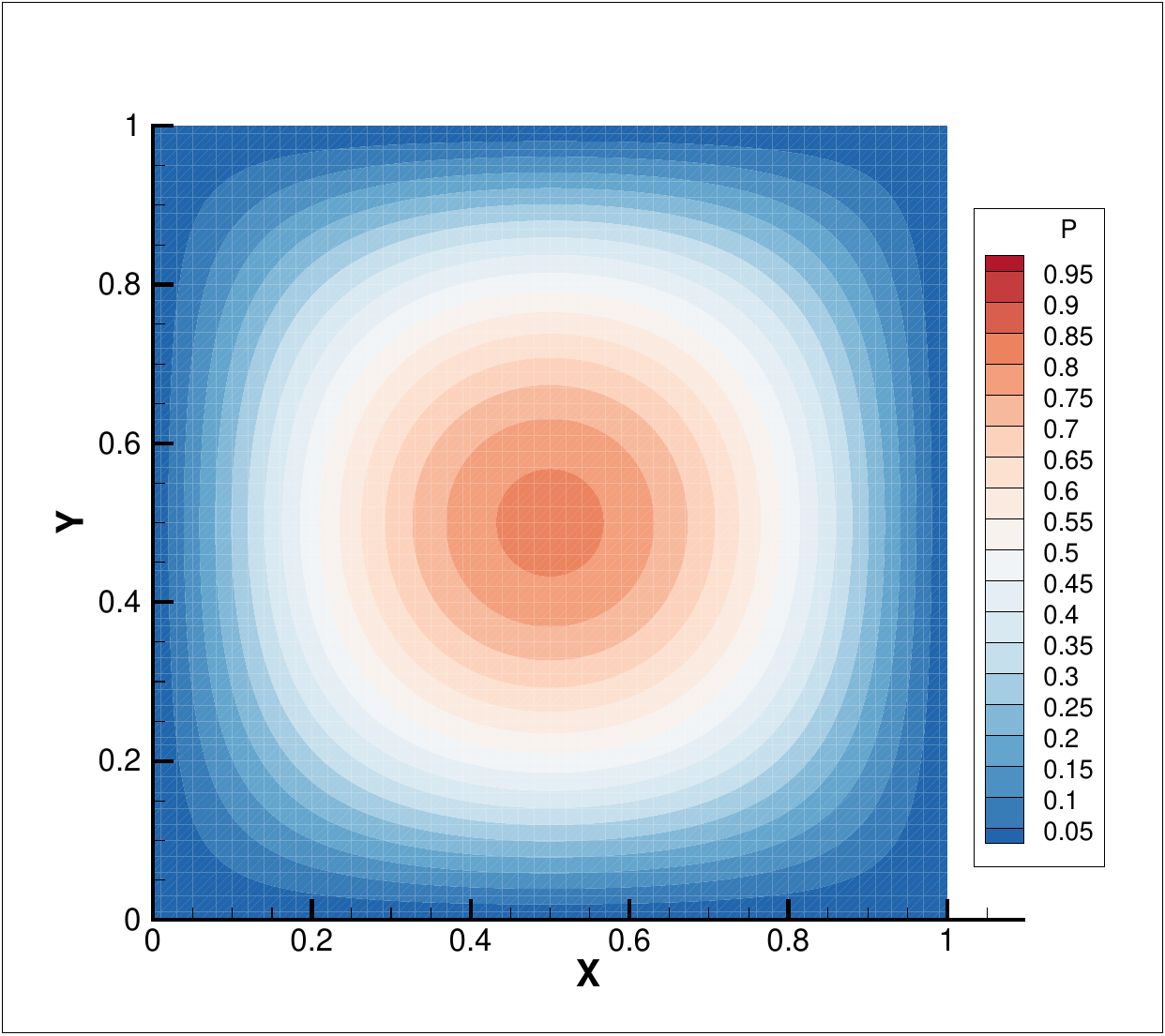} 
			\end{minipage}  
			\begin{minipage}{0.32\textwidth}  
				\centering  
				\includegraphics[width=\textwidth]{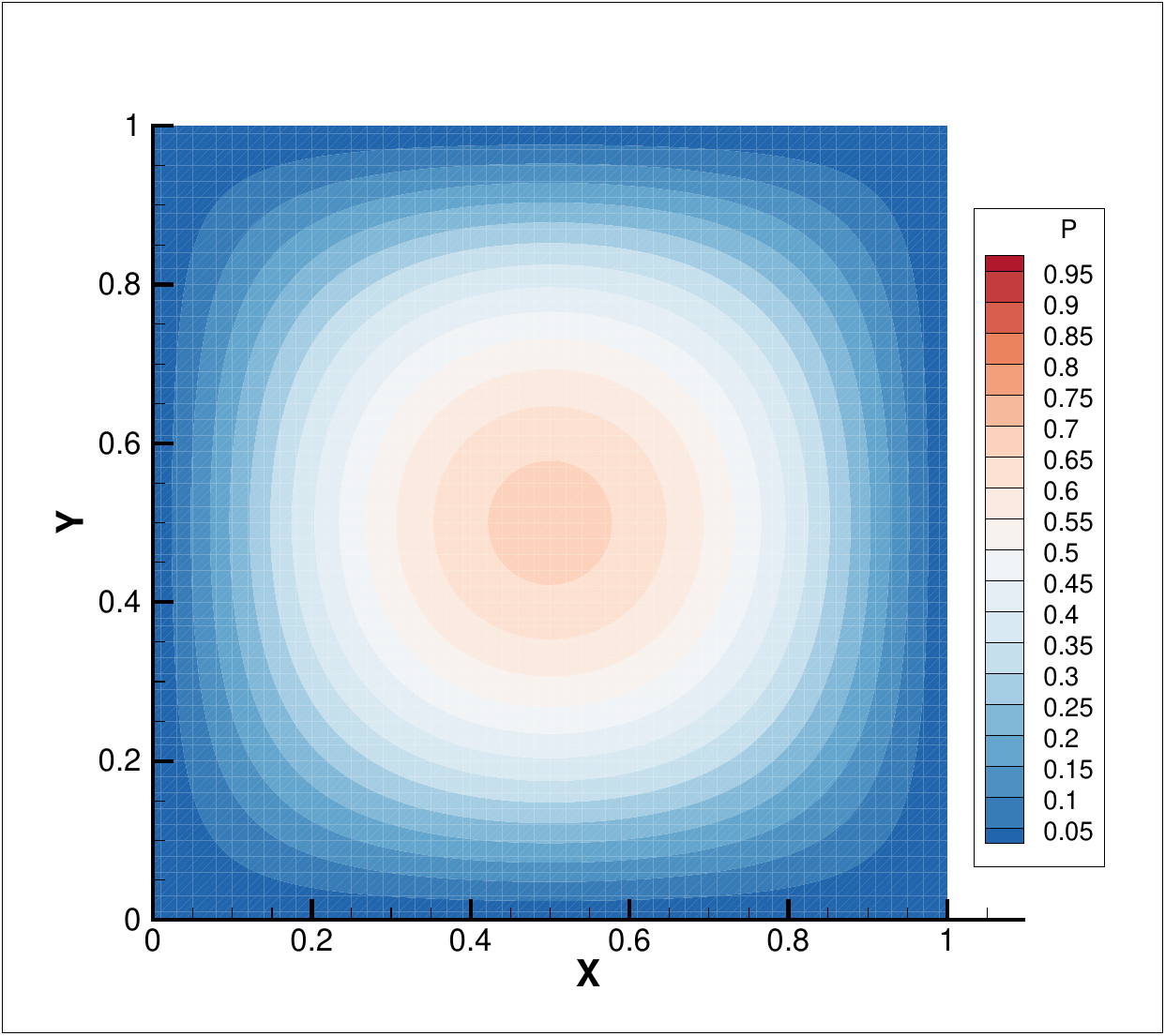}  
			\end{minipage}  
			
			\begin{minipage}{0.32\textwidth} 
				\centering  
				\includegraphics[width=\textwidth]{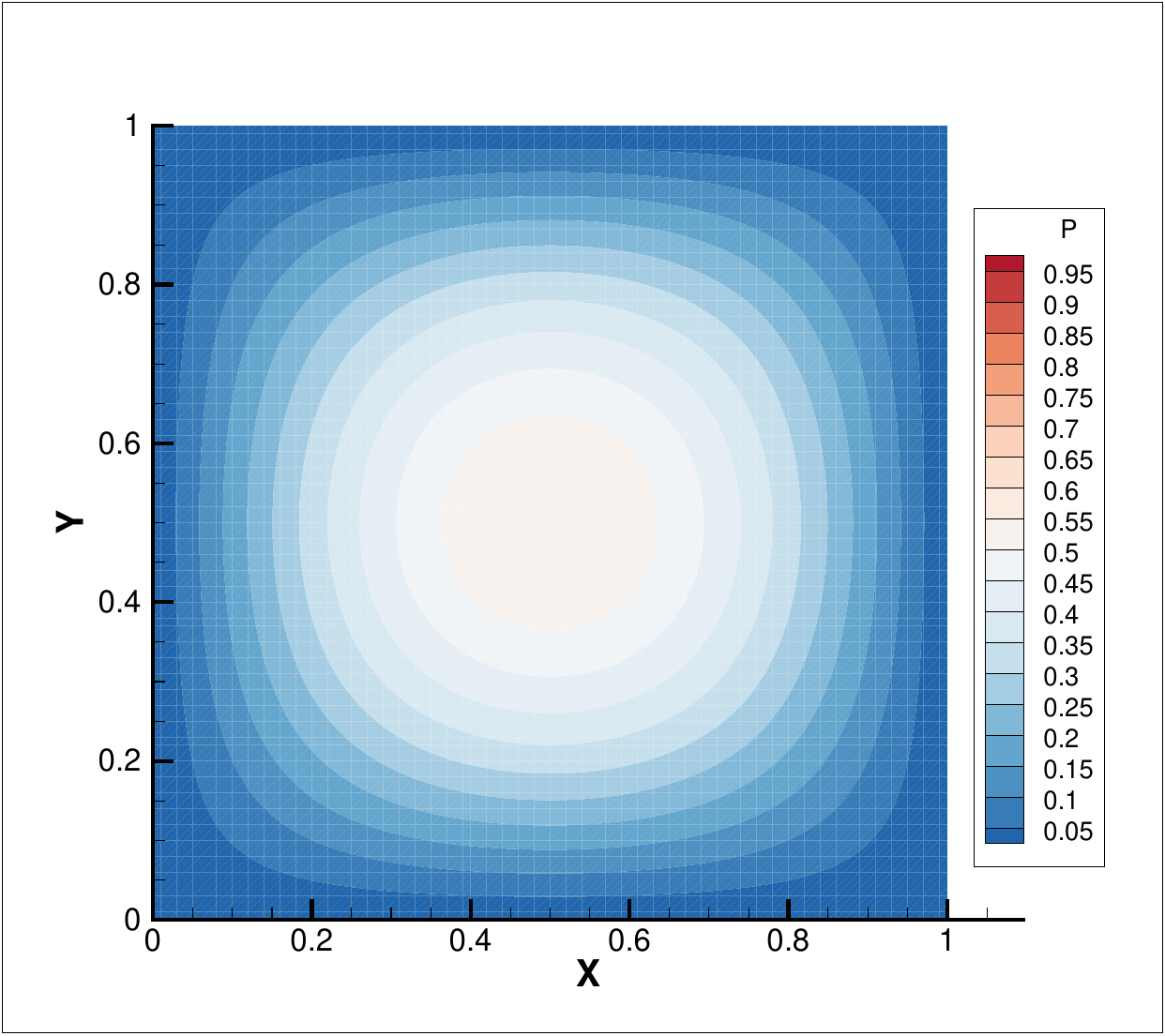}  
			\end{minipage}  
			\begin{minipage}{0.32\textwidth} 
				\centering  
				\includegraphics[width=\textwidth]{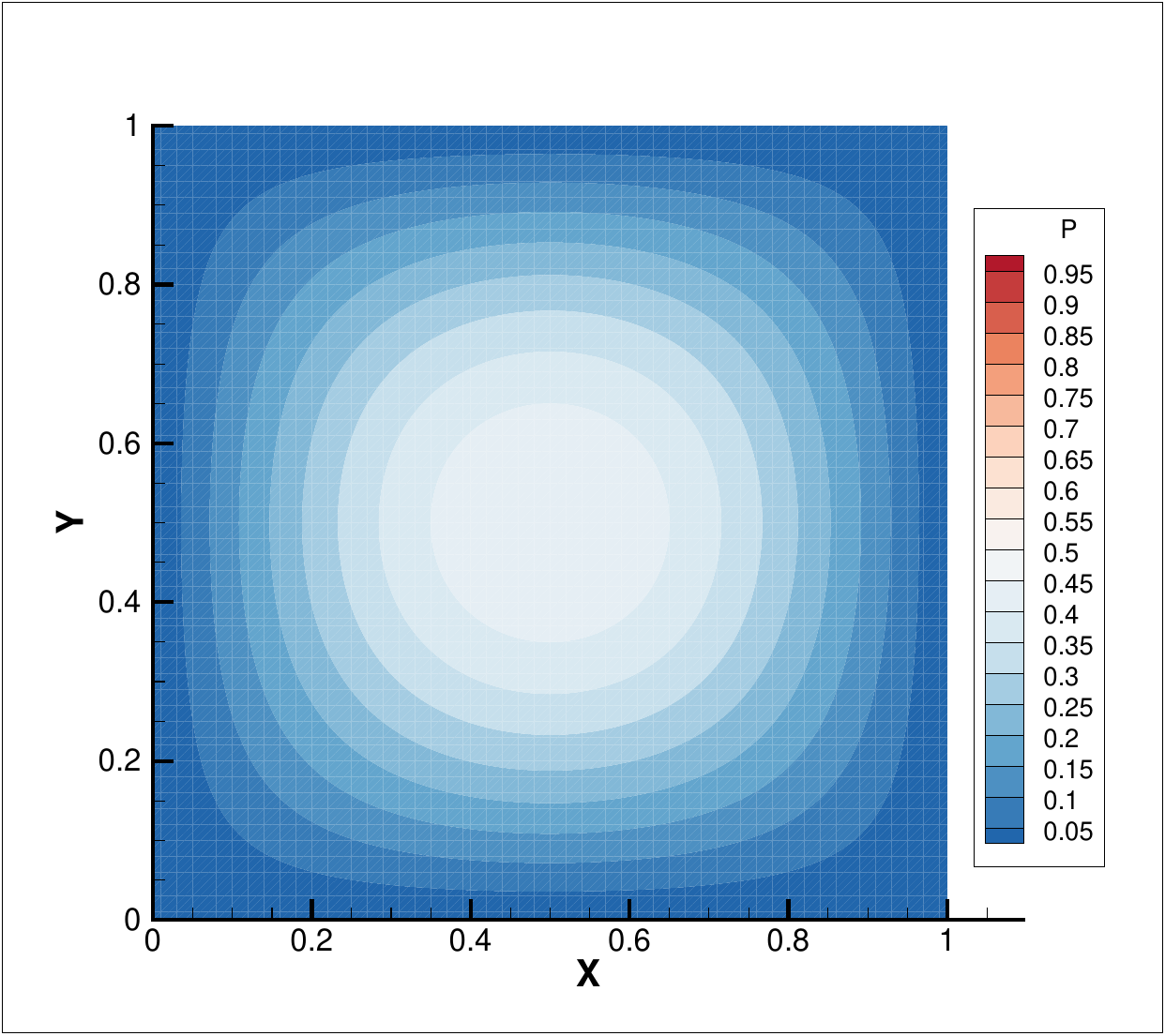} 
			\end{minipage}  
			\begin{minipage}{0.32\textwidth}  
				\centering  
				\includegraphics[width=\textwidth]{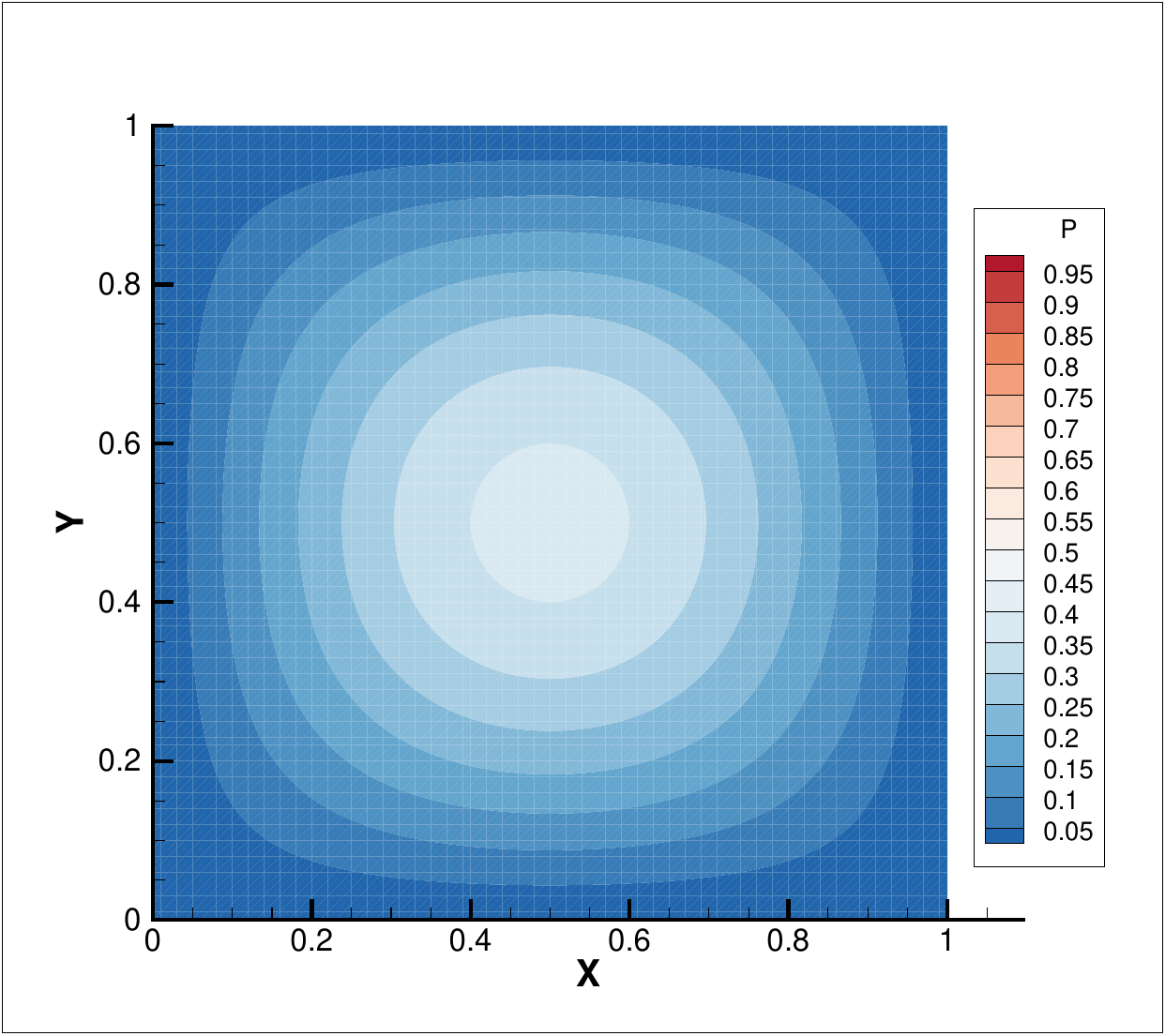}  
			\end{minipage}  
		\caption{Numerical solutions of centration at times t = 0, 0.2, 0.4, 0.6, 0.8, 1.0 with $\nu=1+0.1c$ for the decoupled finite element method.}  
		\label{decoupledcentration}  
	\end{figure}

\begin{figure}[htbp] 
	
	\centering 

		\begin{minipage}{0.32\textwidth} 
			\centering  
			\includegraphics[width=\textwidth]{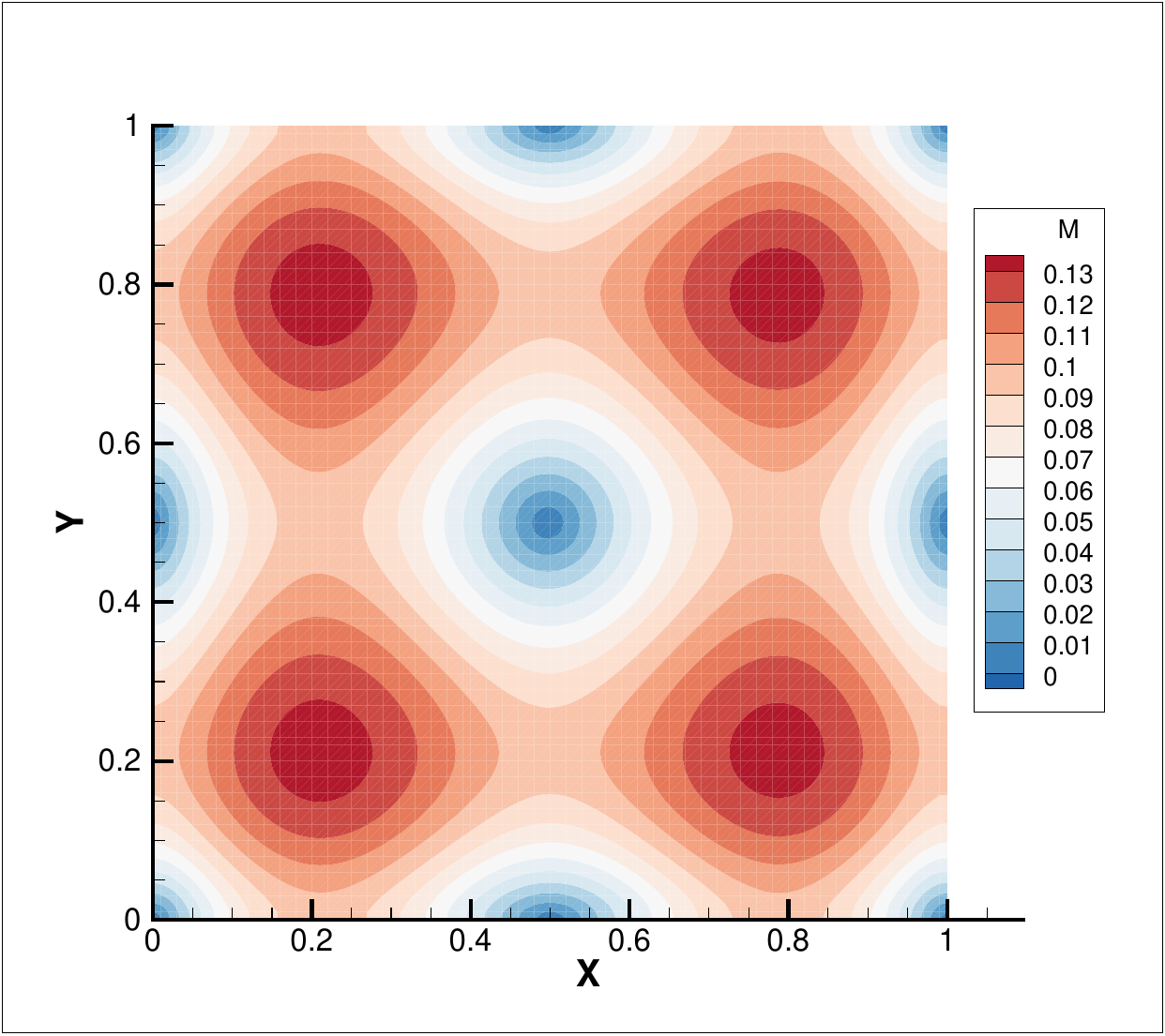}  
		\end{minipage}  
		\begin{minipage}{0.32\textwidth} 
			\centering  
			\includegraphics[width=\textwidth]{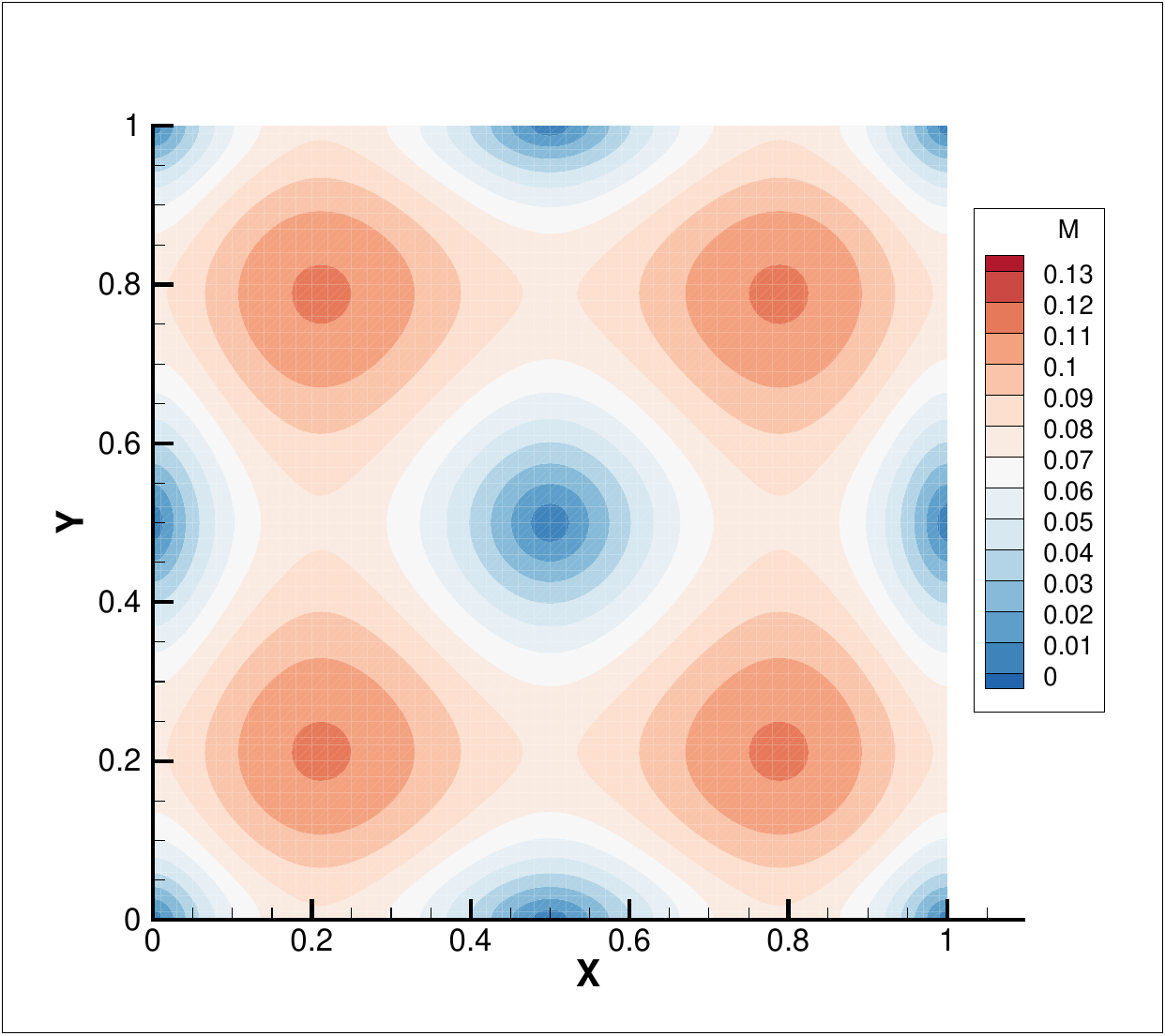} 
		\end{minipage}  
				\begin{minipage}{0.32\textwidth}  
						\centering  
						\includegraphics[width=\textwidth]{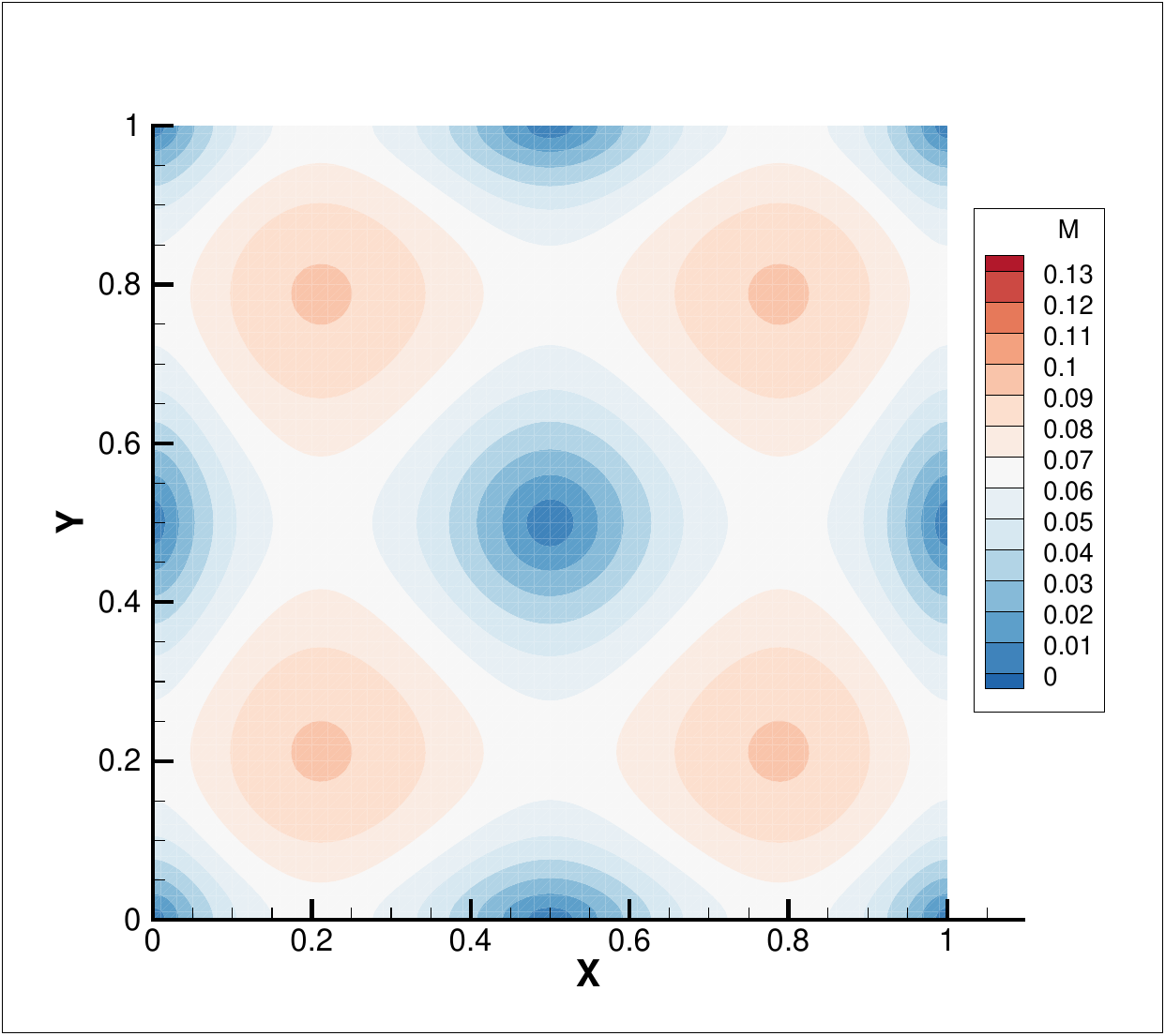}  
					\end{minipage}  
						\begin{minipage}{0.32\textwidth} 
						\centering  
						\includegraphics[width=\textwidth]{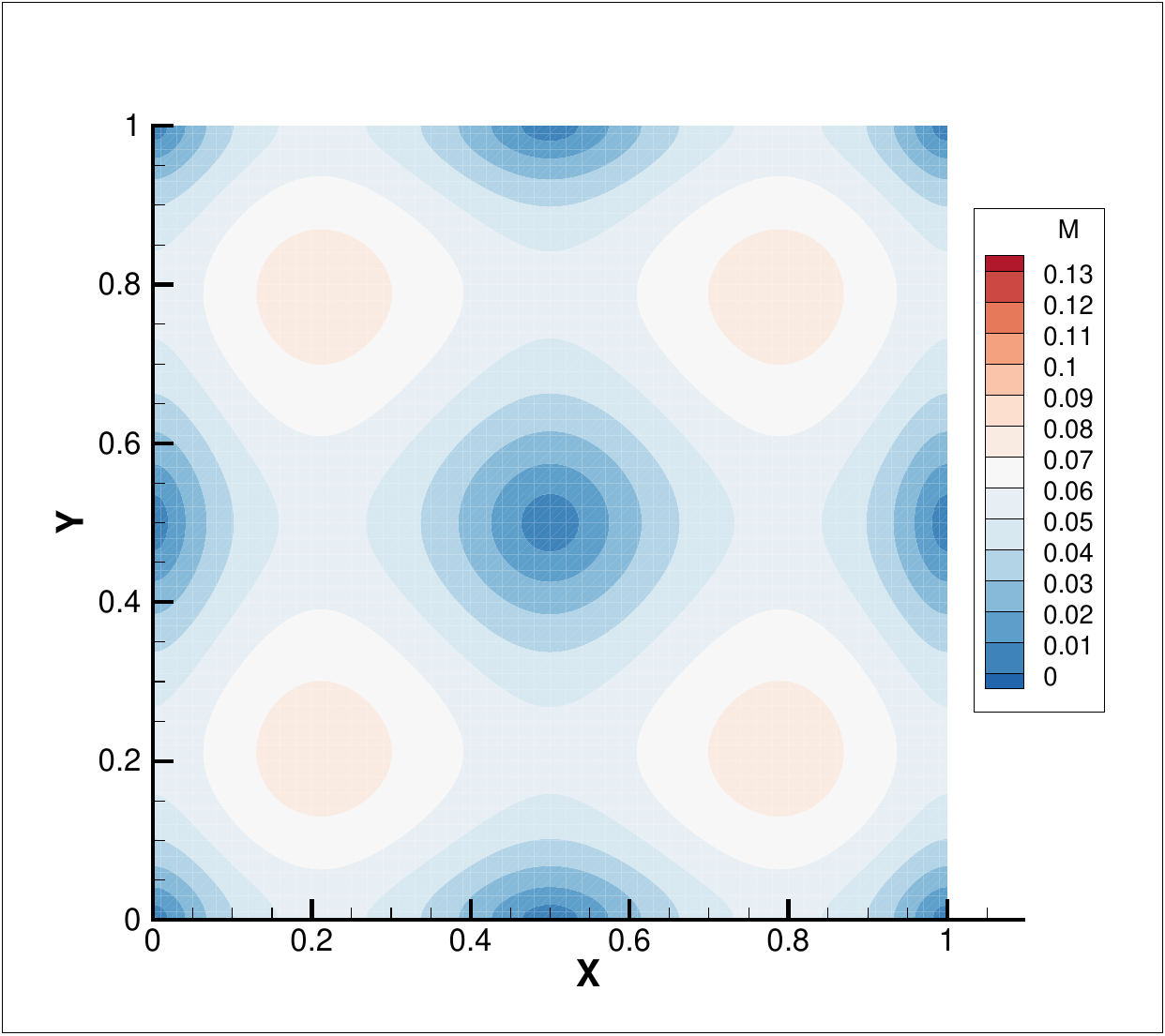}  
					\end{minipage}  
					\begin{minipage}{0.32\textwidth} 
						\centering  
						\includegraphics[width=\textwidth]{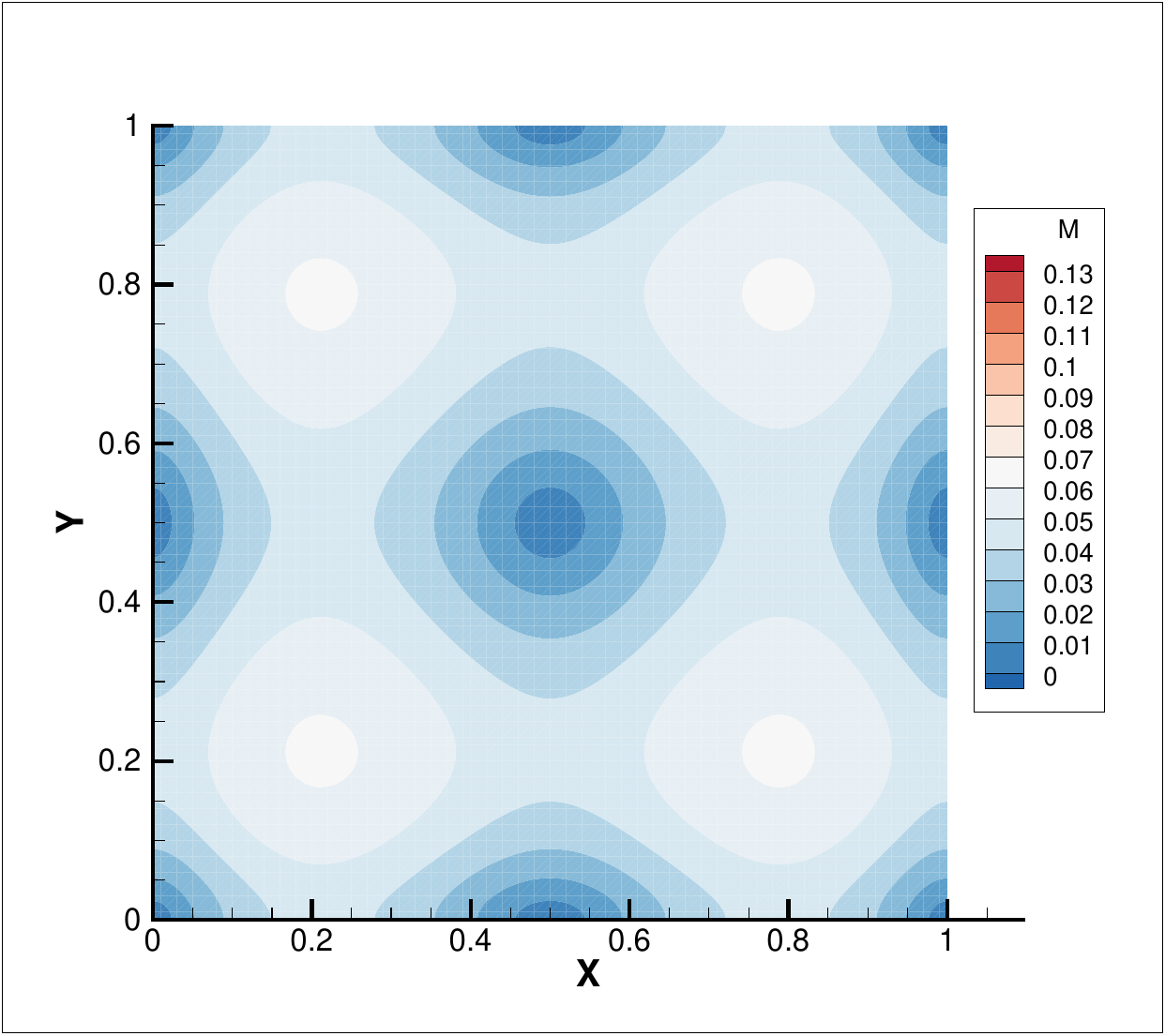} 
					\end{minipage}  
					\begin{minipage}{0.32\textwidth}  
						\centering  
						\includegraphics[width=\textwidth]{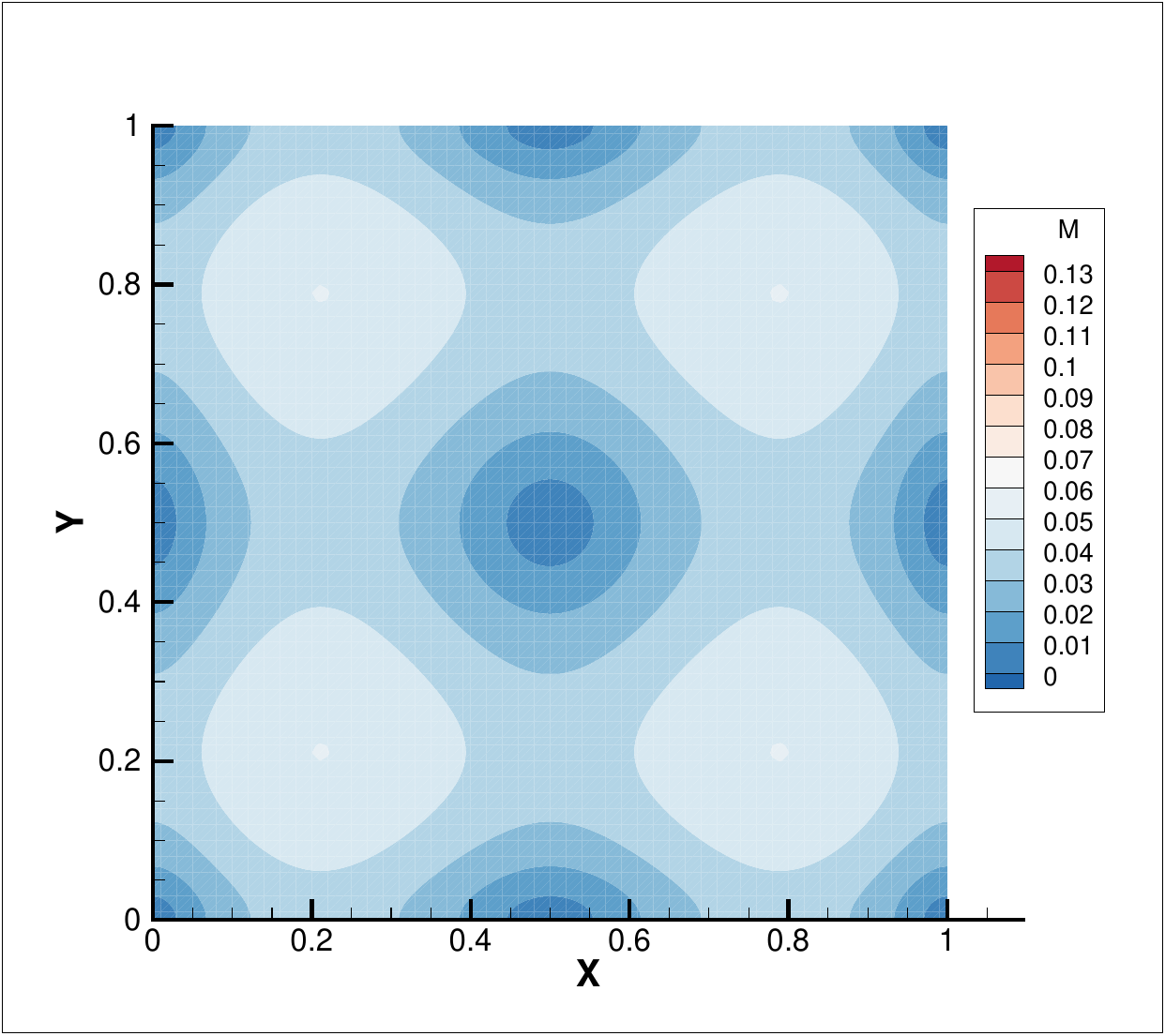}  
					\end{minipage} 
	\caption{Numerical solutions of velocity at times t = 0, 0.2, 0.4, 0.6, 0.8, 1.0 with $\nu = exp(c)$ for the coupled finite element method.}  
	\label{coupledvelocity}  
\end{figure}

\begin{figure}[htbp] 
	
	\centering 
	
		\begin{minipage}{0.32\textwidth} 
			\centering  
			\includegraphics[width=\textwidth]{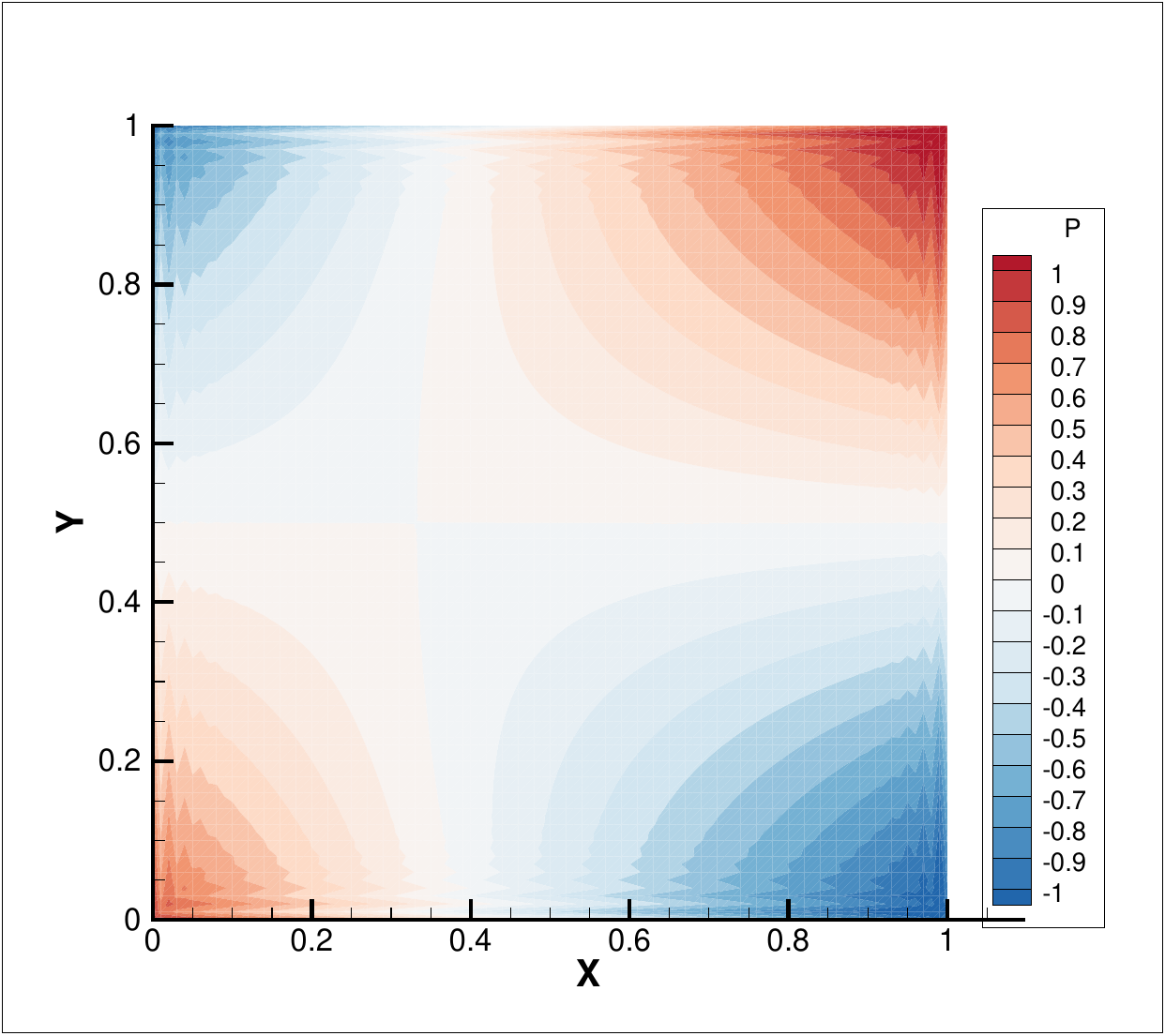}  
		\end{minipage}  
		\begin{minipage}{0.32\textwidth} 
			\centering  
			\includegraphics[width=\textwidth]{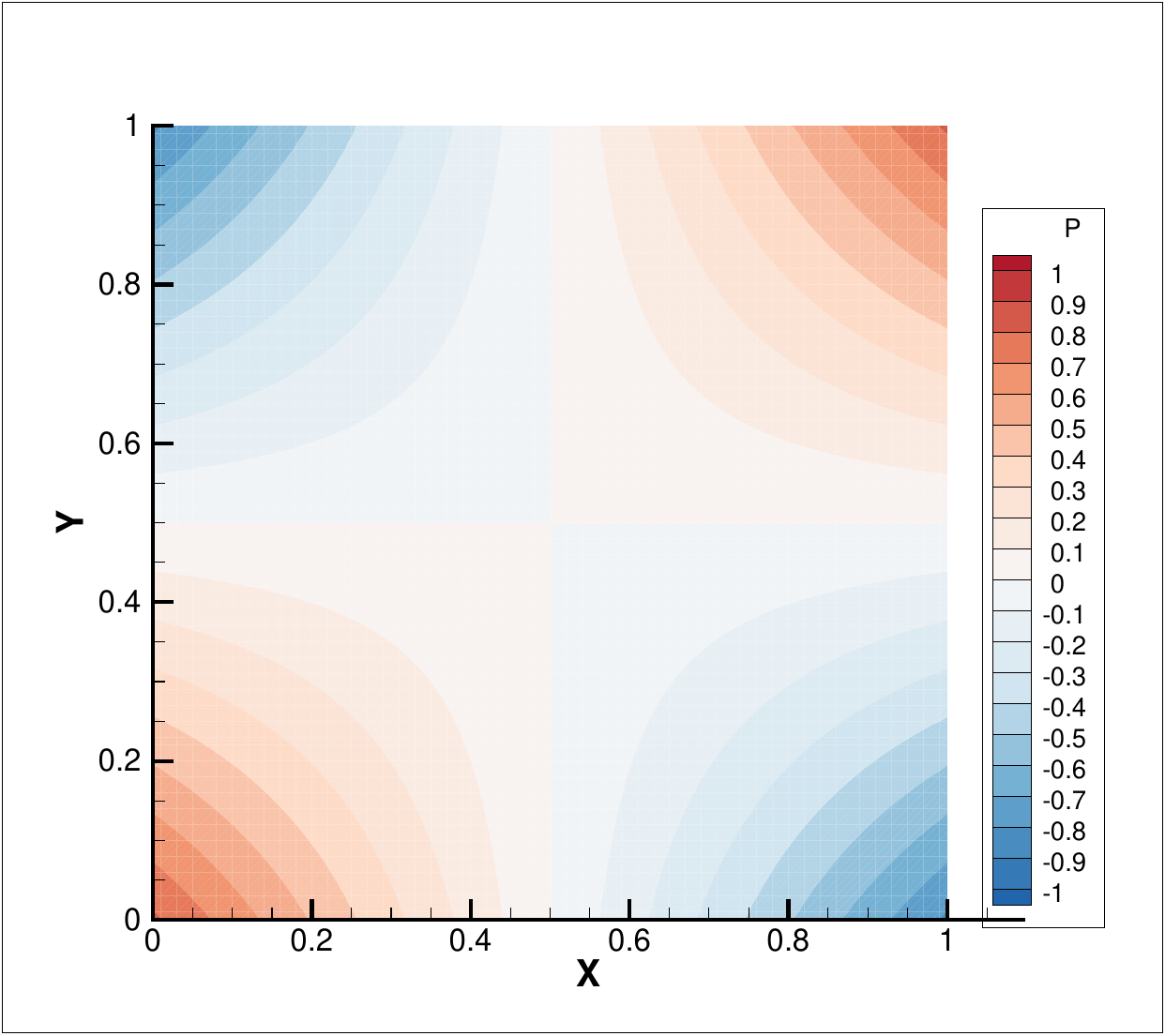} 
		\end{minipage}  
		\begin{minipage}{0.32\textwidth}  
			\centering  
			\includegraphics[width=\textwidth]{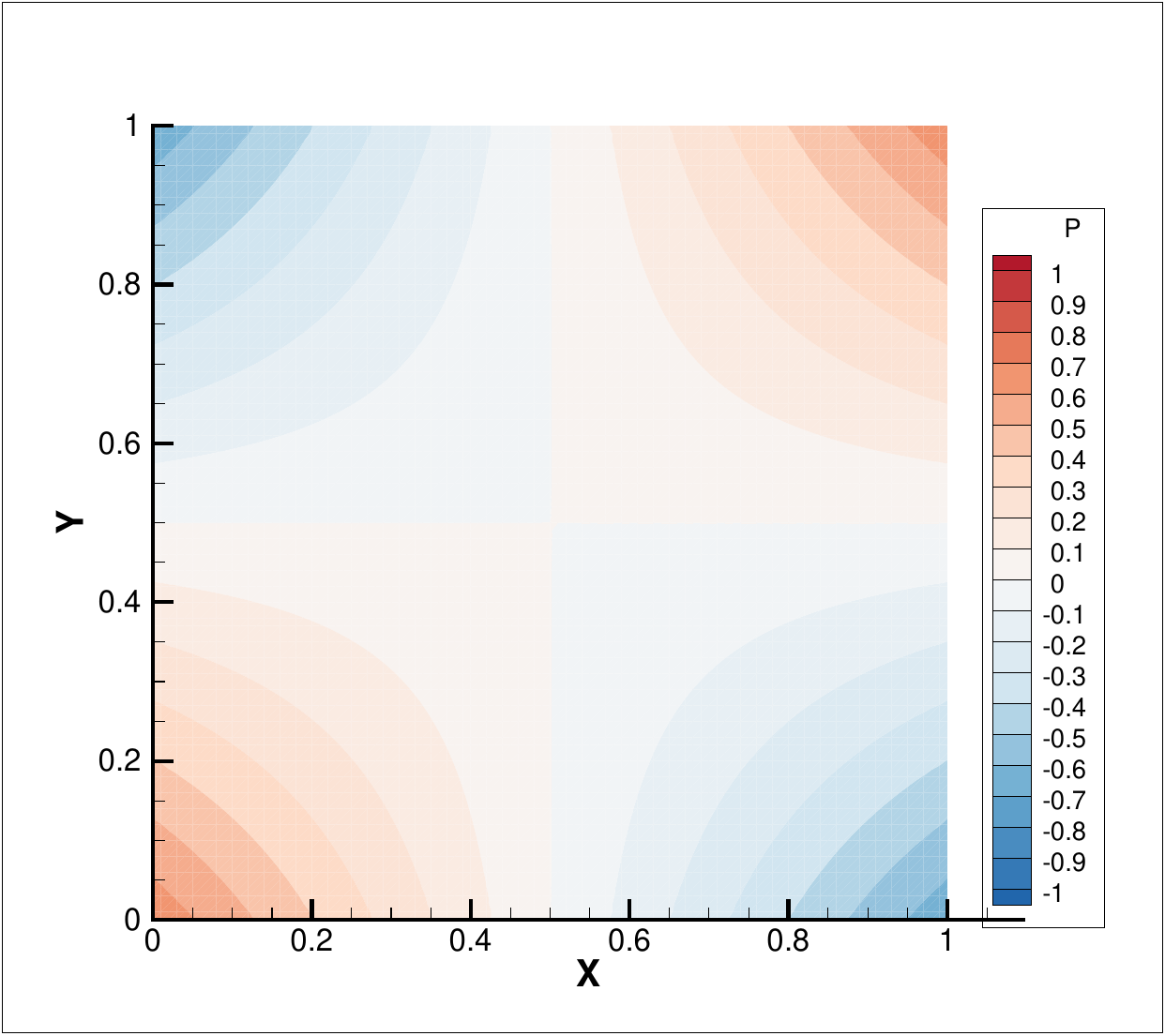}  
		\end{minipage}  	\begin{minipage}{0.32\textwidth} 
		\centering  
		\includegraphics[width=\textwidth]{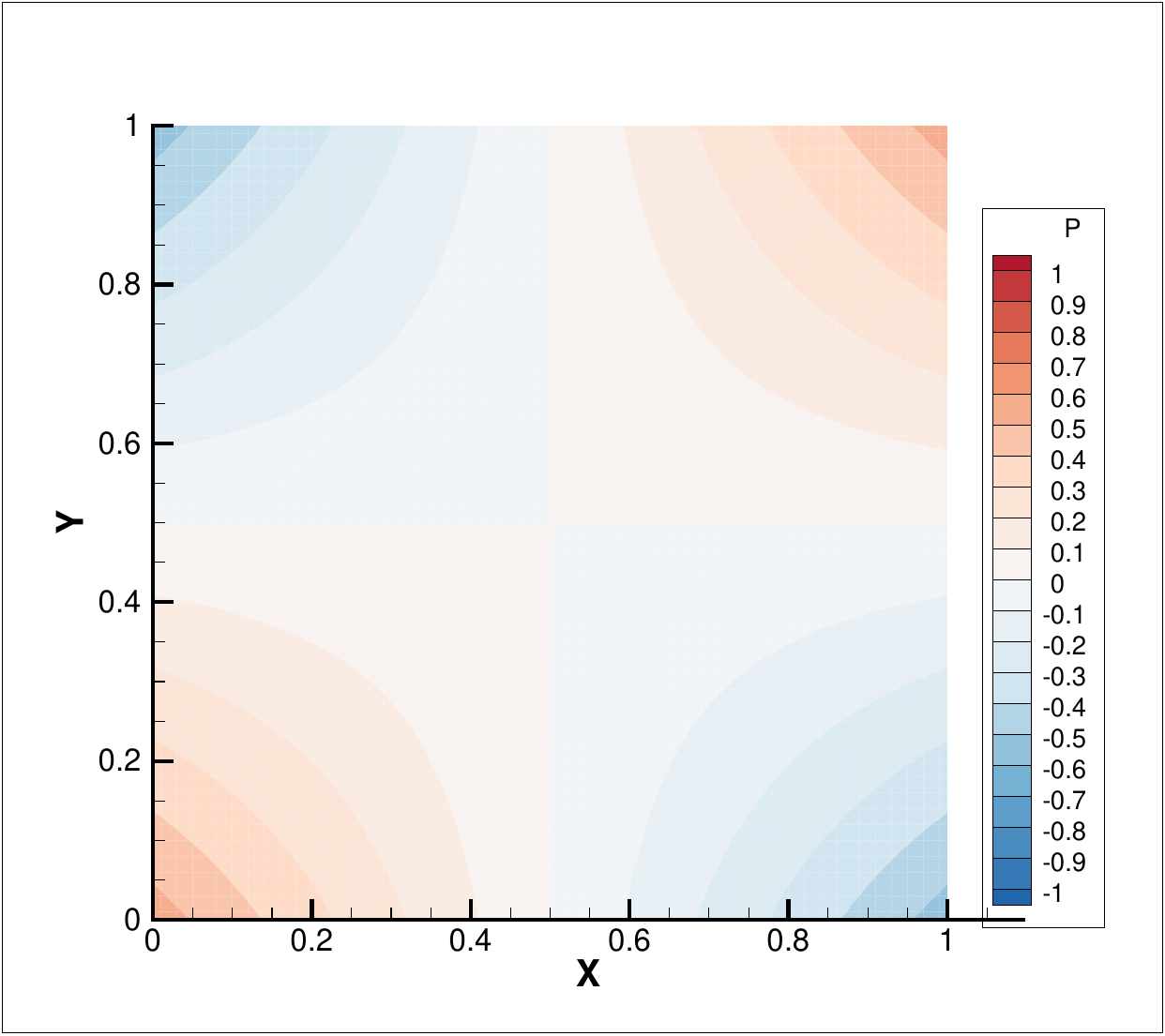}  
		\end{minipage}  
		\begin{minipage}{0.32\textwidth} 
		\centering  
		\includegraphics[width=\textwidth]{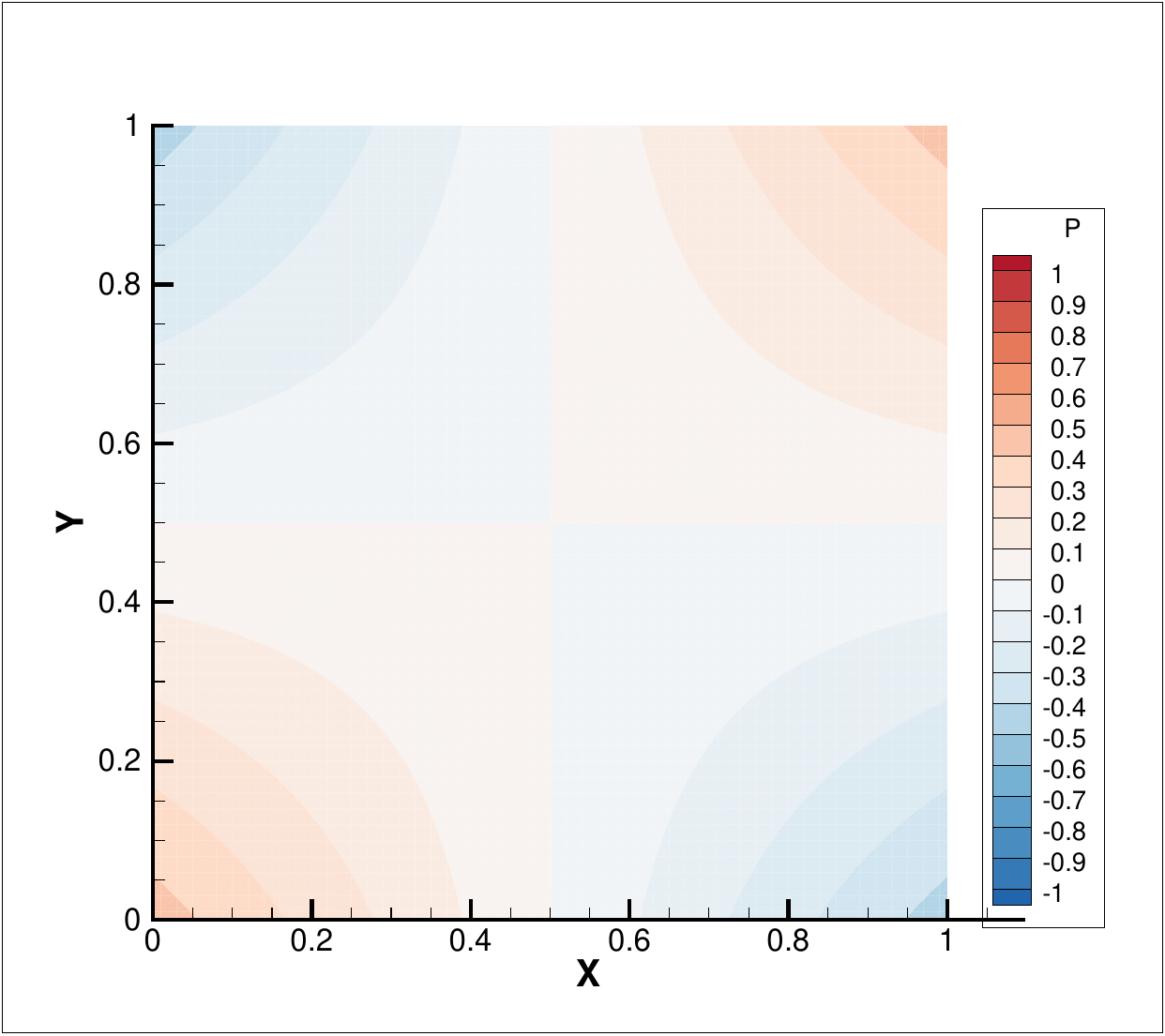} 
		\end{minipage}  
		\begin{minipage}{0.32\textwidth}  
		\centering  
		\includegraphics[width=\textwidth]{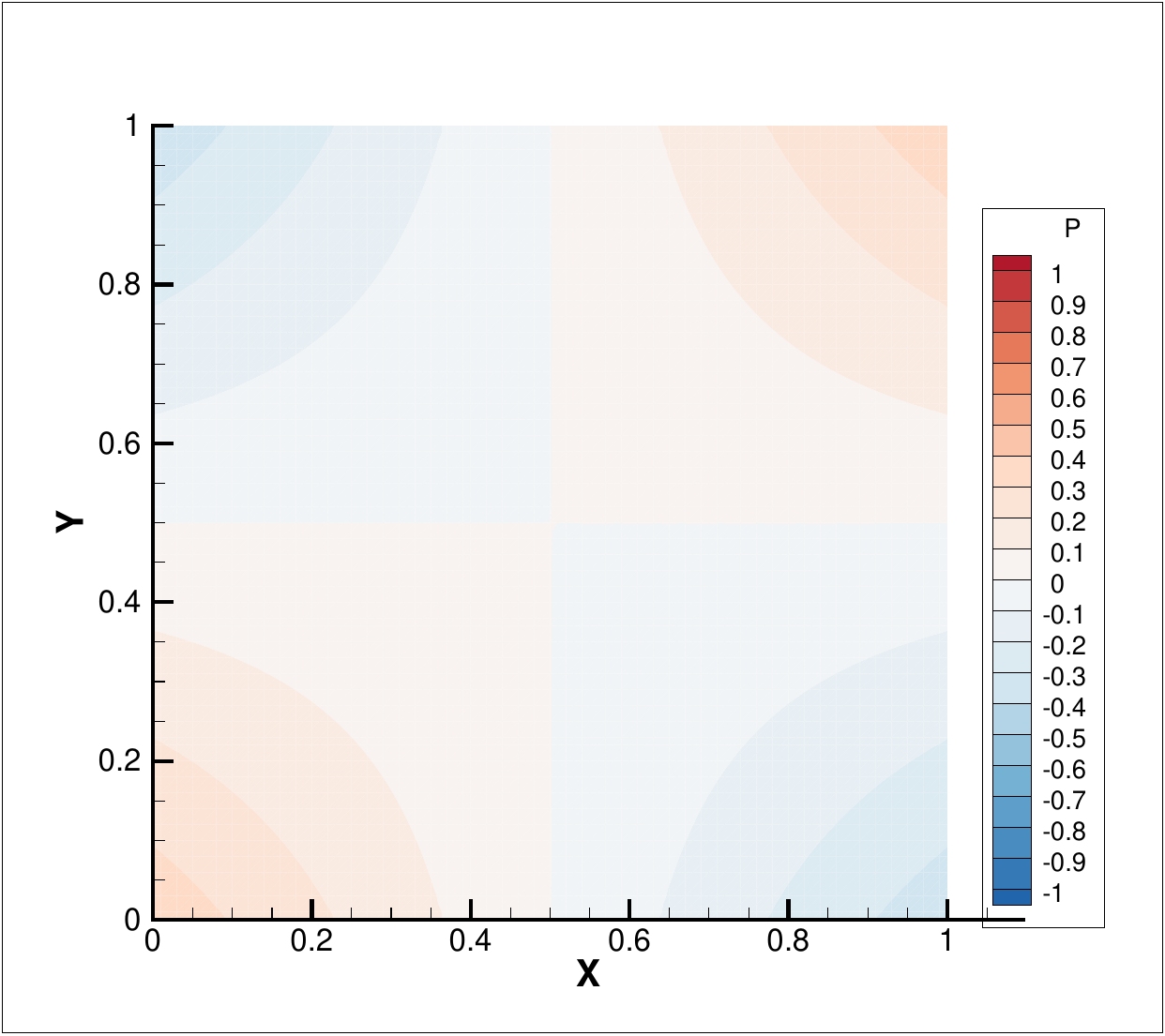}  
		\end{minipage}  
	\caption{Numerical solutions of pressure at times t = 0, 0.2, 0.4, 0.6, 0.8, 1.0 with $\nu =  exp(c)$ for the coupled finite element method.}  
	\label{coupledpressure}  
\end{figure}

\begin{figure}[htbp] 
	
	\centering 

		\begin{minipage}{0.32\textwidth} 
			\centering  
			\includegraphics[width=\textwidth]{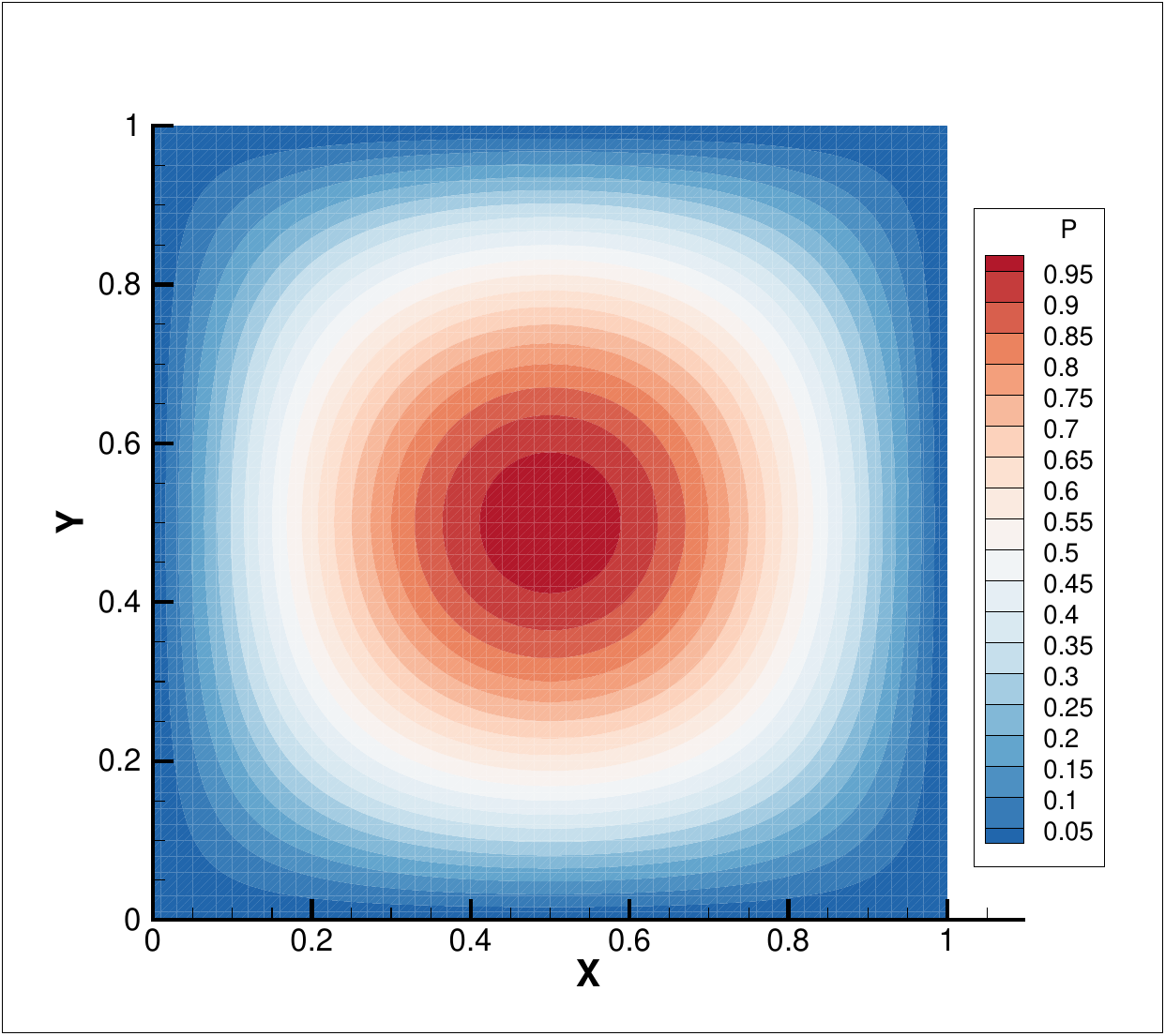}  
		\end{minipage}  
		\begin{minipage}{0.32\textwidth} 
			\centering  
			\includegraphics[width=\textwidth]{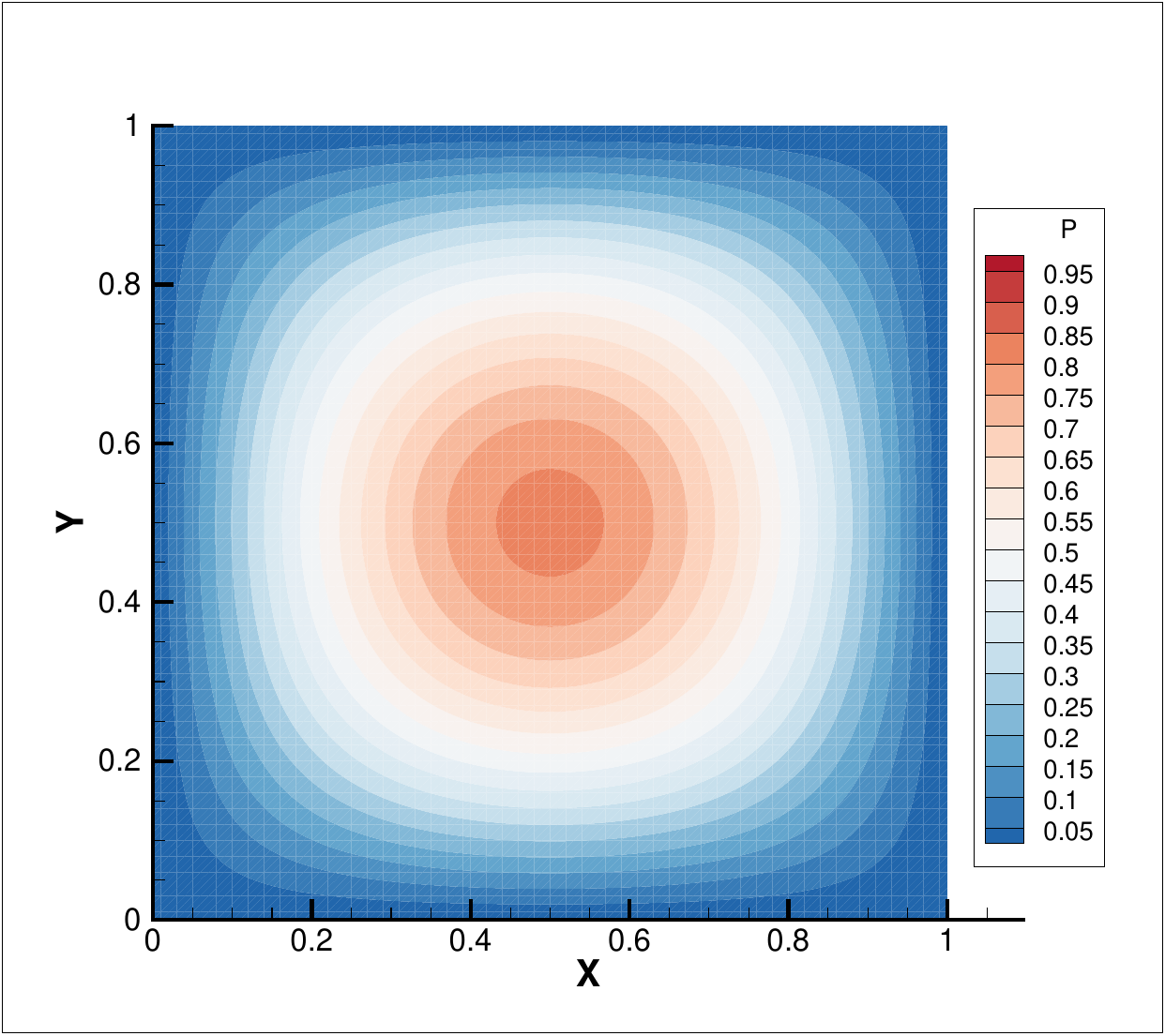} 
		\end{minipage}  
		\begin{minipage}{0.32\textwidth}  
			\centering  
			\includegraphics[width=\textwidth]{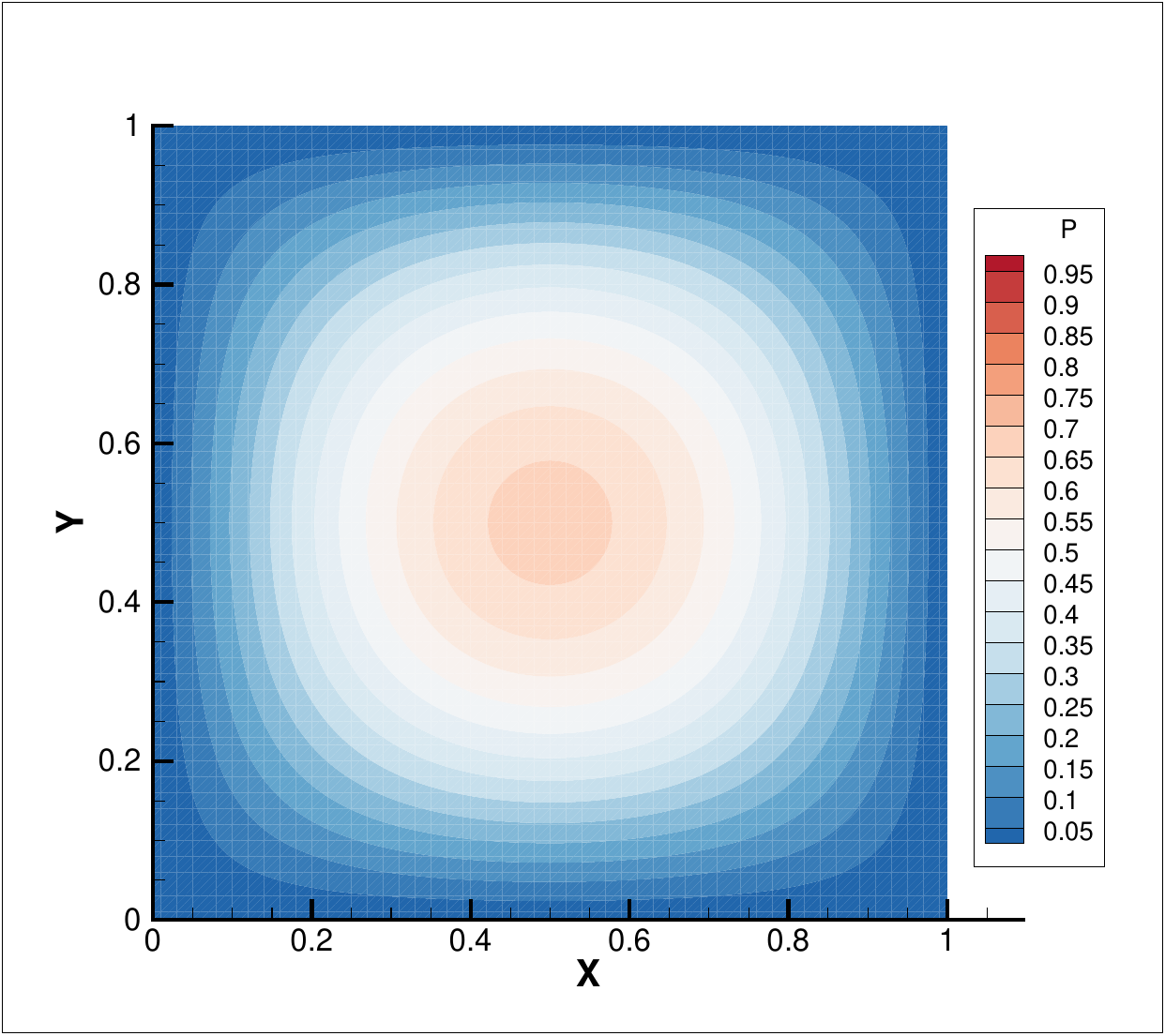}  
		\end{minipage}  

\begin{minipage}{0.32\textwidth} 
	\centering  
	\includegraphics[width=\textwidth]{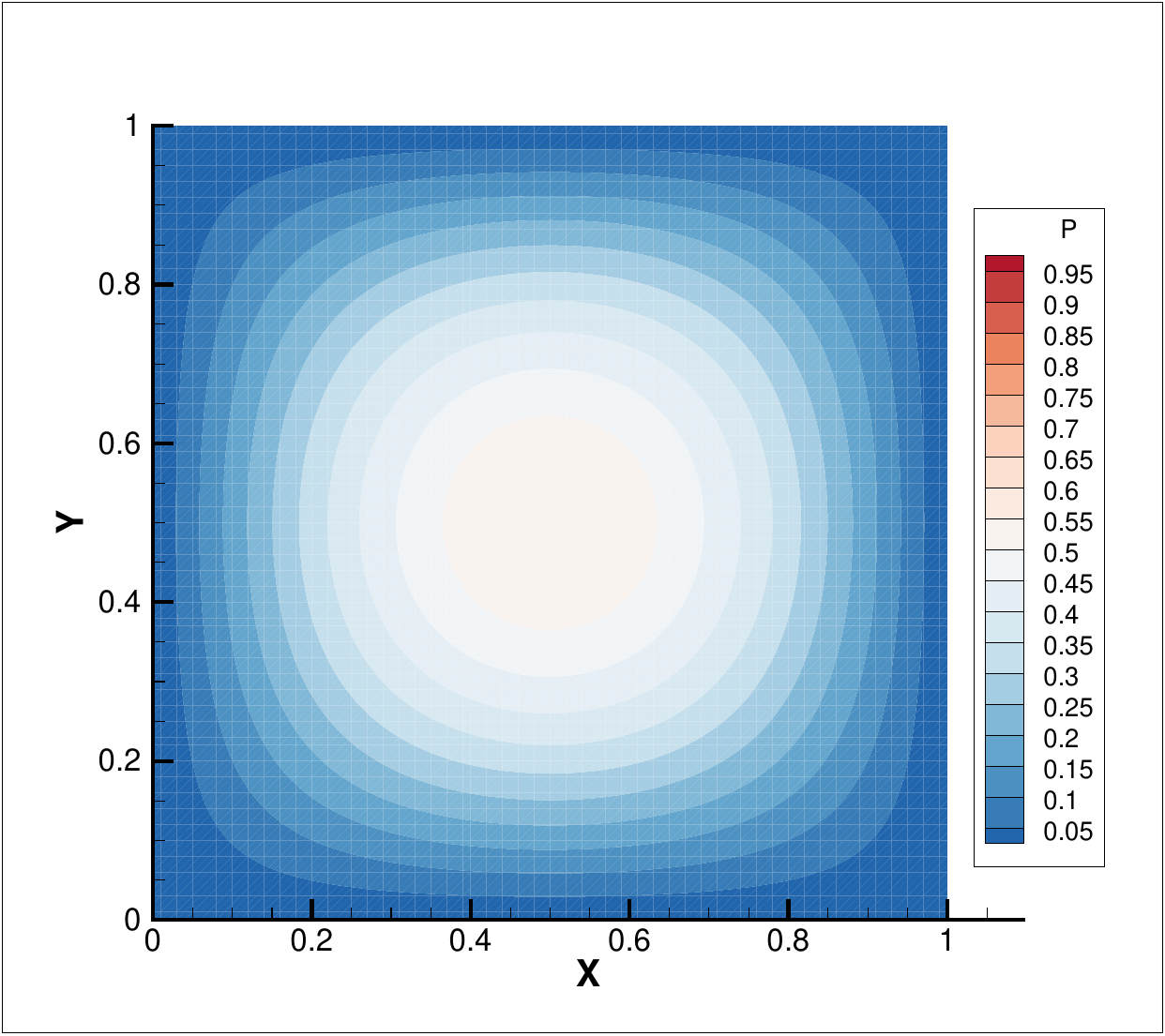}  
\end{minipage}  
\begin{minipage}{0.32\textwidth} 
	\centering  
	\includegraphics[width=\textwidth]{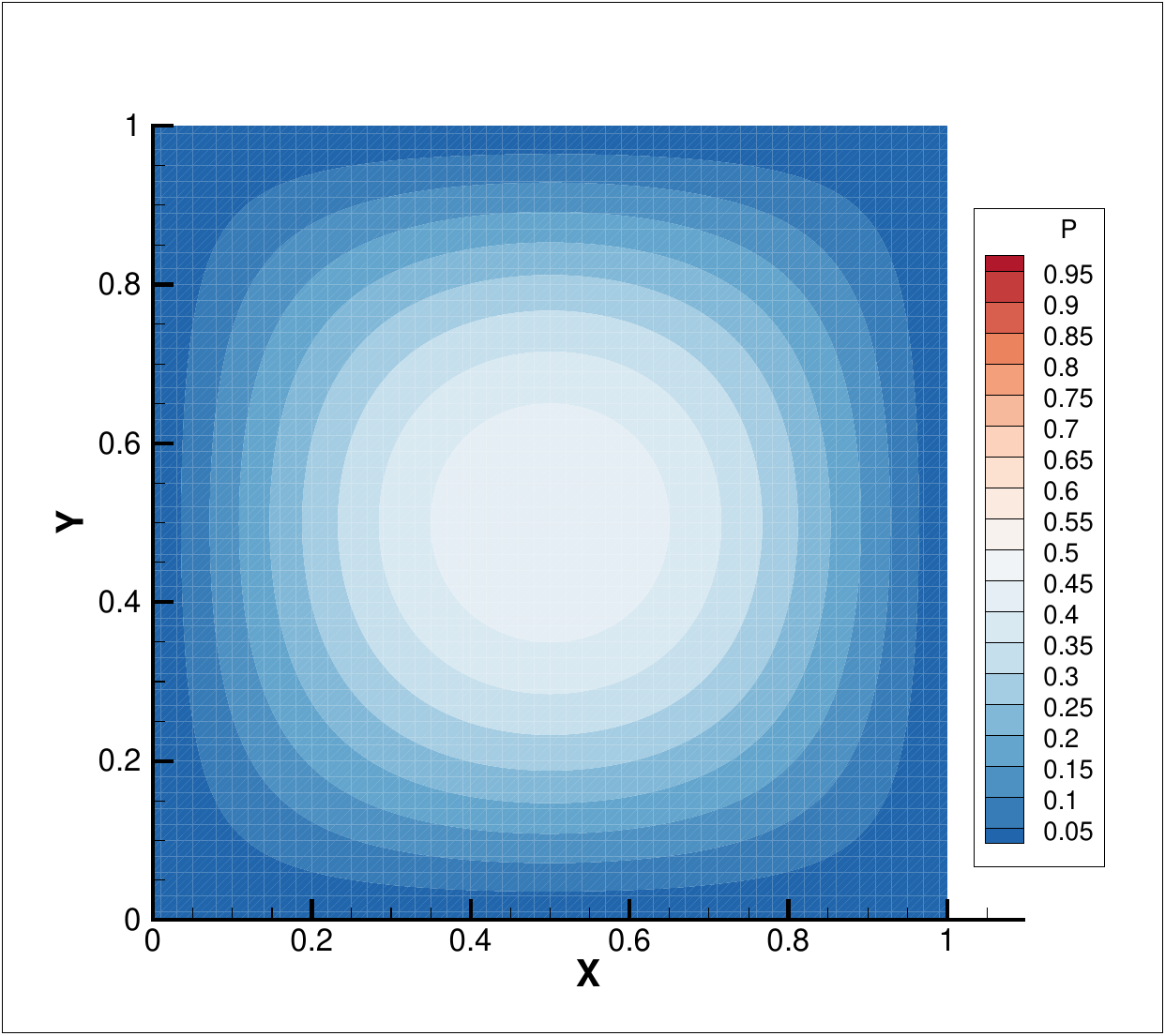} 
\end{minipage}  
\begin{minipage}{0.32\textwidth}  
	\centering  
	\includegraphics[width=\textwidth]{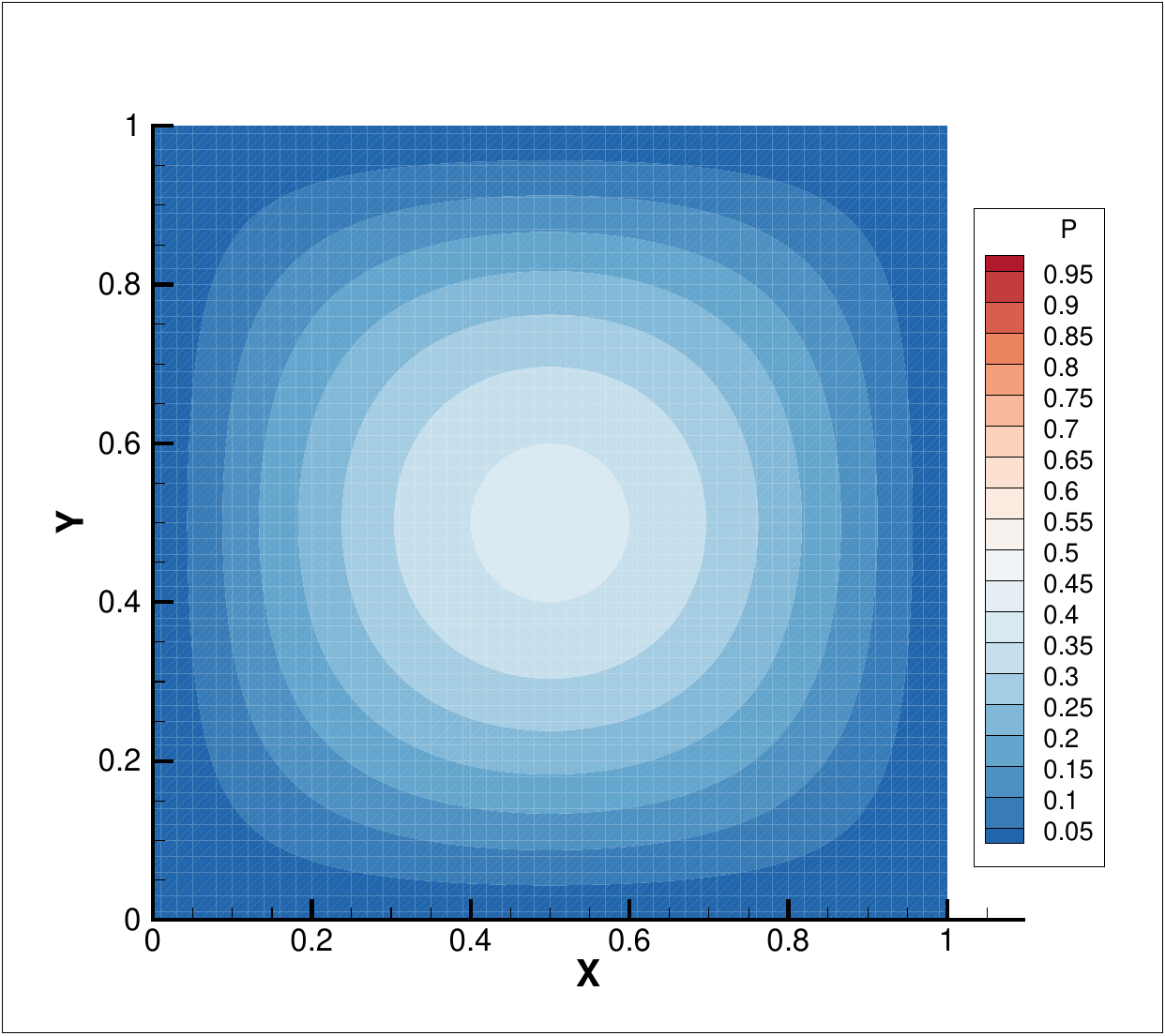}  
\end{minipage}  
	\caption{Numerical solutions of centration at times t = 0, 0.2, 0.4, 0.6, 0.8, 1.0 with $\nu = exp(c)$ for the coupled finite element method.}  
	\label{coupledcentration}  
\end{figure}

\section{Conclusion}
In this paper, we proposed the coupled and decoupled BDF2 finite element methods for the time-dependent bioconvection flows problem, which is coupled by the Navier-Stokes equations and the advection-diffusion equation. We have found optimal error estimate for the velocity and concentration in $L^2$-norm and $H^1$-norm in a bounded domain by mixed finite element methods. We present some numerical results by varying different concentration dependent viscosity, we can see that the proposed coupled and decoupled BDF2 finite element methods are effective and valid. It's important to research the unsteady bioconvection for the study of Chemotaxis-Navier–Stokes system, Patlak-Keller-Segel-Navier-Stokes system and Chemo-Repulsion-Navier-Stokes system.

\section*{Acknowledgments}
The authors thank the anonymous referees very much for their helpful comments and
suggestions which helped to improve the presentation of this paper.
\section*{Date availability statement}
The datasets generated and  analyzed during the current study are available from the corresponding author upon reasonable request.

\end{document}